\newtheorem{theorem}{Theorem}[section] 
\newtheorem{lemma}[theorem]{Lemma}     
\newtheorem{proposition}[theorem]{Proposition}
\newtheorem{definition}[theorem]{Definition}
\newtheorem{remark}[theorem]{Remark}
\newtheorem*{theorem*}{Theorem}
\begin{document}

\title[Growth of hypercyclic functions]{Growth of hypercyclic functions: a continuous path between $\mathcal{U}$-frequent hypercyclicity and hypercyclicity}

\author{A. Mouze, V. Munnier}
\thanks{The first author was partly supported by the grant ANR-17-CE40-0021 of the French
National Research Agency ANR (project Front)}
\address{Augustin Mouze, Univ. Lille, \'Ecole Centrale de Lille, CNRS, UMR 8524 - Laboratoire Paul Painlev\'e  F-59000 Lille, France}
\email{augustin.mouze@univ-lille.fr}
\address{Vincent Munnier, 16 avenue Pasteur, 94100 Saint Maur Des Foss\'es, France}
\email{munniervincent@hotmail.fr}

\keywords{hypercyclic vectors, weighted shifts, rate of growth, boundary behavior}
\subjclass[2010]{47A16, 30E25, 47B37, 47B38}

\begin{abstract} 
We are interested in the optimal growth in terms of $L^p$-averages of hypercyclic and $\mathcal{U}$-frequently hypercyclic functions for some weighted Taylor shift operators acting on the space of analytic functions on the unit disc. We unify the results obtained by considering intermediate notions of upper frequent hypercyclicity between the $\mathcal{U}$-frequent hypercyclicity and the hypercyclicity.     
\end{abstract}
\maketitle                   


\section{Introduction} A linear operator on a Fr\'echet space $X$ is said to be \textit{hypercyclic} if there is a vector $x\in X$ such that for every non-empty open 
set $U\subset X$ the set $N(x,U):=\{n\in\mathbb{N}:T^n x\in U\}$ is infinite, where $(T^n)$ is the sequence of iterates of $T$. In this situation, $x$ is called a \textit{hypercyclic} vector. Further there are more precise and stringent notions that allow to quantify how often a hypercyclic vector visits a non-empty open set. A linear operator on a Fr\'echet space $X$ is said to be \textit{frequently hypercyclic} (resp. \textit{$\mathcal{U}$-frequently hypercyclic}) if there is a vector $x\in X$ such that for every non-empty open set $U\subset X$ the set $N(x,U)$ has positive lower (resp. upper) density, where the lower and upper densities of a subset $A\subset \mathbb{N}$ are defined respectively as follows
$$\underline{d}(A)=\liminf_{n\rightarrow +\infty}\frac{\#A\cap\{1,\dots,n\}}{n}\ \hbox{ and }\ 
\overline{d}(A)=\limsup_{n\rightarrow +\infty}\frac{\#A\cap\{1,\dots,n\}}{n}.$$
These notions were introduced by Bayart and Grivaux \cite{BayGriv} and Shkarin \cite{Shka}. The dynamics of linear operators is a very active branch of research: we refer the reader to \cite{BayMath,grossebook} and the references therein for background in linear dynamics. Clearly a frequently hypercyclic vector is $\mathcal{U}$-frequently hypercyclic and a $\mathcal{U}$-frequently hypercyclic vector is hypercyclic. Classical examples of frequently or $\mathcal{U}$-frequently hypercyclic operators are given by suitable weighted shifts. As usual we denote by $\mathbb{D}$ the open unit disc $\{z\in\mathbb{C}:\vert z\vert<1\}$ of the complex plane and by $H(\mathbb{D})$ the set of analytic functions in $\mathbb{D}$. It is well known that $H(\mathbb{D})$ endowed with the topology of uniform convergence on compact subsets is a Fr\'echet space. For $\alpha\in\mathbb{R}$, let $w(\alpha)=(w_n(\alpha))$ be the weighted sequence of nonzero complex numbers given by, for all $n\geq 1$, $$w_n(\alpha)=\left(1+\frac{1}{n}\right)^{\alpha}.$$ 
In the present paper, we consider the associated weighted Taylor shift: 
$$T_{\alpha}:H(\mathbb{D})\rightarrow H(\mathbb{D})\hbox{ given by }T_{\alpha}(\sum_{k\geq 0}a_k z^k)=\sum_{k\geq 0}a_{k+1}w_{k+1}(\alpha) z^k.$$ 
For $\alpha=0$, $T_{0}$ is the classical Taylor shift operator. It is easy to check that for every  real number $\alpha$, $T_{\alpha}$ is a frequently hypercyclic operator. For instance, we refer the reader to \cite{Bernal_CV, ge2, MouMun3}. The problem of determining possible rates of growth of frequently hypercyclic functions for $T_{\alpha}$ in terms of $L^p$ averages was studied in \cite{MouMun3} (see \cite{MouMun2} for the case $\alpha=0$ too). For $0<r<1$ and $f\in H(\mathbb{D})$, we consider the classical integral means  
$$M_p(f,r)=\left(\frac{1}{2\pi}\int_{0}^{2\pi}\vert f(re^{i\theta})\vert^{p}d\theta\right)^{1/p} (1\leq p<\infty)\hbox{ and }
M_{\infty}(f,r)=\sup_{0\leq t\leq 2\pi}\vert f(re^{it})\vert.$$
In the same way, for any holomorphic polynomial $P$ let us define, for $p\geq 1,$ 
$$\Vert P\Vert_p=\left(\frac{1}{2\pi}\int_{0}^{2\pi}\vert P(e^{i\theta})\vert^{p}d\theta\right)^{1/p}\hbox{ and }
\Vert P\Vert_{\infty}=\sup_{0\leq t\leq 2\pi}\vert P(e^{it})\vert.$$
In the following, for all $p>1$ $q$ will stand for the exponent conjugate to $p$, i.e. $\frac{1}{p}+\frac{1}{q}=1$ and we will adopt the convention $q=\infty$ if $p=1$. For $1\leq p\leq\infty$, the authors recently highlighted a \textit{critical exponent}, i.e. a value of the parameter $\alpha$ from which the 
$L^p$-growth of a frequently hypercyclic function for $T_{\alpha}$ no longer has the same behavior. In the case of frequent hypercyclicity for $T_{\alpha}$, the critical exponent is equal to $\alpha=\frac{1}{\max(2,q)}$. Indeed the authors obtained the following statements. First for $p>1$ they proved the following result.

\begin{theorem} {\rm \textbf{(\cite[Theorem 1.2]{MouMun3})}}\label{anc_main_optimal1} Let $\alpha\in\mathbb{R}.$ The following assertions hold
	\begin{enumerate}
		\item For any $1< p< +\infty$ there is a frequently hypercyclic function $f$ in $H(\mathbb{D})$ for $T_{\alpha}$ satisfying the following estimates: there exists $C>0$ such that for every $0<r<1$ 
		$$M_{p}(f,r)\leq \left\{\begin{array}{ll}
		C(1-r)^{\alpha-\frac{1}{\max(2,q)}}& \mbox{ if } \alpha<\frac{1}{\max(2,q)},\\
		C \vert \log(1-r) \vert ^{\frac{1}{p}}& \mbox{ if } \alpha=\frac{1}{\max(2,q)},\\
		C &\mbox{ if } \alpha>\frac{1}{\max(2,q)}.
		\end{array}\right.$$ 
		These estimates are optimal: every frequently hypercyclic function $f$ in $H(\mathbb{D})$ for $T_{\alpha}$ is bounded from below by the corresponding previous estimate depending on $\alpha.$ 
		\item There is a frequently hypercyclic function $f$ in $H(\mathbb{D})$ for $T_{\alpha}$ satisfying the following estimates: 
		there exists $C>0$ such that for every $0<r<1$ 
		$$M_{\infty}(f,r)\leq \left\{\begin{array}{ll}
		C(1-r)^{\alpha-\frac{1}{2}}& \mbox{ if } \alpha<1/2,\\
		C \vert\log(1-r)\vert& \mbox{ if } \alpha=1/2,\\
		C &\mbox{ if } \alpha>1/2.
		\end{array}\right.$$ 
		For $\alpha\ne 1/2,$ these estimates are optimal: every frequently hypercyclic function $f$ in $H(\mathbb{D})$ for $T_{\alpha}$ is bounded from below by the corresponding previous estimate depending on $\alpha.$ 				
	\end{enumerate}
\end{theorem}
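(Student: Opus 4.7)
The starting point is the explicit action of $T_\alpha^n$ on Taylor coefficients: since the weights telescope multiplicatively,
\[
T_\alpha^n\Bigl(\sum_{k\geq 0} a_k z^k\Bigr) = \sum_{m\geq 0} a_{m+n}\left(\frac{m+n+1}{m+1}\right)^\alpha z^m.
\]
Requiring $T_\alpha^n f$ to approximate a polynomial $P=\sum_m\hat P(m)z^m$ on a compact set is therefore equivalent to $|a_{n+m}|$ being of order $n^{-\alpha}|\hat P(m)|$ for the relevant small $m$. The whole theorem then reduces to quantifying $M_p$ of power series whose coefficients at index $n$ have size $n^{-\alpha}$ along a subset $A\subset\mathbb{N}$ of positive lower density.

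For the upper bound I would use a Bayart--Grivaux series construction $f=\sum_j\sum_{n\in A_j} S_\alpha^n P_j$, where $(P_j)$ is a dense sequence of polynomials in $H(\mathbb{D})$, $(A_j)$ are pairwise disjoint sets of positive lower density, and $S_\alpha z^k=((k+1)/(k+2))^\alpha z^{k+1}$ is the canonical right inverse of $T_\alpha$, so that $T_\alpha^n f$ returns $P_j$ plus a controlled tail when $n\in A_j$. Controlling the growth then reduces to estimating $M_p$ of a model block $g(z)=\sum_{n\in A} n^{-\alpha}z^n$. For $1\leq p\leq 2$, a direct asymptotic analysis (comparing with $(1-z)^{\alpha-1}$) yields $M_p(g,r)\sim(1-r)^{\alpha-1/q}$ in the subcritical range, which matches the threshold $\alpha=1/q$. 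For $p\geq 2$ the same non-lacunary choice would only give $(1-r)^{\alpha-1/q}$, which grows \emph{faster} than the target $(1-r)^{\alpha-1/2}$; to reach the sharp threshold $\alpha=1/2$ one has to replace the block by a lacunary (or randomly-signed) variant and invoke Paley-- or Khintchine-type estimates so that $M_p\sim M_2\sim(1-r)^{\alpha-1/2}$. At the critical exponent, the borderline sum $\sum n^{-1}r^n\sim\log(1/(1-r))$ produces the logarithmic factor announced in the statement.

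For the lower bound, assume $f$ is frequently hypercyclic and fix a nonzero polynomial $P$. The set $A:=\{n:\|T_\alpha^n f-P\|_{K,\infty}<\varepsilon\}$ has positive lower density for a suitable compact $K\subset\mathbb{D}$, so the coefficient identity forces $|a_n|\geq c\,n^{-\alpha}$ for $n\in A$. For $p\geq 2$, monotonicity and Parseval give
\[
M_p(f,r)\geq M_2(f,r)\geq\Bigl(\sum_{n\in A}|a_n|^2 r^{2n}\Bigr)^{1/2}\gtrsim (1-r)^{\alpha-1/2};
\]
for $1\leq p<2$, the appropriate direction of Hausdorff--Young, $\|\widehat{f_r}\|_q\leq\|f_r\|_p$ with $q\geq 2$, yields $M_p(f,r)\gtrsim(\sum_{n\in A}|a_n|^q r^{nq})^{1/q}\gtrsim(1-r)^{\alpha-1/q}$. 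The supercritical regime $\alpha>1/\max(2,q)$ is automatic since $f\not\equiv 0$. The $M_\infty$ case proceeds analogously, with pointwise evaluation at a cleverly chosen radius replacing the integral step.

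The main obstacle, in my view, is achieving the sharp $(1-r)^{\alpha-1/2}$ upper bound for $p\geq 2$: one must introduce enough sparsity or genuine sign-cancellation in the $A_j$'s to trigger Paley-type bounds, while simultaneously keeping each $A_j$ of positive lower density and preventing the interactions between distinct $j$'s from destroying the target estimate---a delicate combinatorial balancing act typically handled by randomized Bayart--Grivaux constructions. A second subtle point is the critical case $\alpha=1/\max(2,q)$: the upper bound loses only a logarithm, but the lower bound at the exact threshold fails for $p=\infty$ (as the statement explicitly acknowledges) and is genuinely delicate for finite $p$, reflecting the endpoint behaviour of Khintchine--Paley inequalities where direct and converse bounds cease to match.
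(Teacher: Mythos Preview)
This theorem is not proved in the present paper; it is quoted from \cite{MouMun3}. The paper does, however, reproduce essentially the same construction (adapted to the $\mathcal{U}_{\beta^\gamma}$ setting) in Subsection~\ref{subsectionconstructive}, so the intended method can be read off there.

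Your lower-bound argument is correct and matches the approach used: Parseval for $p\geq 2$ and Hausdorff--Young for $1<p<2$, applied to a coefficient set of positive density on which $|a_n|\gtrsim n^{-\alpha}$; compare the proof of Theorem~\ref{ufhcalpha_no_critic}.

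For the upper bound you have correctly located the difficulty but not resolved it, and your suggested fixes do not work as stated. Lacunary sets have density zero and are therefore incompatible with frequent hypercyclicity; ``randomly-signed variants'' is only a heuristic, since a random construction yields almost-sure growth estimates but does not by itself exhibit a single function that is simultaneously frequently hypercyclic and of the claimed growth. The construction in \cite{MouMun3} (mirrored in Subsection~\ref{subsectionconstructive} here) abandons the Bayart--Grivaux series altogether and writes $f_\alpha=\sum_n P_{n,\alpha}$ as a sum of explicit polynomial blocks supported on disjoint dyadic intervals $\{2^n,\ldots,2^{n+1}-1\}$, each block being built from a Rudin--Shapiro polynomial $p_N$ (for $p\geq 2$) or a de~la~Vall\'ee--Poussin variant $p_N^*$ (for $1<p<2$), dilated and multiplied by the shifted target $\tilde q_k$. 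The key feature of $p_N$ is that its coefficients are $\pm 1$ with at least half equal to $+1$---so the set of indices where the block reproduces the target has positive lower density, forcing frequent hypercyclicity---while $\|p_N\|_p\leq 5\sqrt N$ for every $p\in[2,\infty]$, which is the deterministic substitute for the Khintchine cancellation you invoke. The $L^p$ norm of each block is then handled via the Marcinkiewicz multiplier theorem (and, for $p=\infty$, a fractional Bernstein inequality), and summing over dyadic scales produces the bounds in the statement. This Drasin--Saksman device (Lemma~\ref{lemma_rud_shap}) is the missing idea in your sketch.
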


For $p=1$, the following result holds. For any positive integer $\ell\geq 1$, $\log_{\ell}$ stands for $\log\circ\dots\circ\log$ where $\log$ appears $\ell$ times.
 
\begin{theorem} {\rm \textbf{(\cite[Proposition 4.1 and Theorem 4.4]{MouMun3})}}\label{anc_main_optimal2}
For any $\ell\geq 1$, there is a frequently hypercyclic function $f$ in $H(\mathbb{D})$ for $T_{\alpha}$ satisfying the following estimates: there exists $C>0$ such that for every $0<r<1$ sufficiently large 
	$$M_{1}(f,r)\leq \left\{\begin{array}{ll}
	C(1-r)^{\alpha}\log_\ell(-\log(1-r))& \mbox{ if } \alpha<0,\\
	C \vert \log(1-r) \vert \log_\ell(-\log(1-r))& \mbox{ if } \alpha=0,\\
	C &\mbox{ if } \alpha>0.
	\end{array}\right.$$ 
	Moreover every frequently hypercyclic function $f$ in $H(\mathbb{D})$ for $T_{\alpha}$ satisfies the following estimates:
	$$\liminf_{r\rightarrow 1^{-}}\left[M_{1}(f,r)(1-r)^{-\alpha}\right]>0 \mbox{ if } \alpha<0,\quad  
	\displaystyle\liminf_{r\rightarrow 1^{-}}\left[\frac{M_{1}(f,r)}{-\log(1-r)}\right]>0 \mbox{ if } \alpha=0, $$
		$$\displaystyle\liminf_{r\rightarrow 1^{-}}\left[M_{1}(f,r)\right]>0\mbox{ if } \alpha>0.$$ 
\end{theorem}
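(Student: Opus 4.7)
The statement splits into an existence part (constructing, for every $\ell\geq 1$, a frequently hypercyclic $f$ with prescribed $M_1$-growth) and a universality part (a matching lower bound valid for every frequently hypercyclic $f$). I would handle them separately, since they rely on quite different tools.

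For the lower bound my main tool would be Hardy's inequality in $H^1$, $\sum_{n\geq 0}|a_n|/(n+1)\leq \pi\|f\|_{H^1}$, which applied to $z\mapsto f(rz)$ gives
\[
\sum_{n\geq 0}\frac{|a_n|\,r^n}{n+1}\;\leq\;\pi\,M_1(f,r)\qquad(0<r<1).
\]
A direct computation shows that the constant coefficient of $T_\alpha^n f$ equals $a_n\prod_{j=1}^n w_j(\alpha)=a_n(n+1)^\alpha$; using frequent hypercyclicity with an open neighborhood of the constant function $1$ then yields a set $A\subset\mathbb{N}$ with $\underline{d}(A)>0$ and a constant $c>0$ such that $|a_n|\geq c\,n^{-\alpha}$ for every $n\in A$. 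Injecting this into Hardy's inequality produces
\[
M_1(f,r)\gtrsim \sum_{n\in A}\frac{r^n}{n^{1+\alpha}},
\]
and a routine Abel-type estimate over a positive-lower-density set yields the three asymptotic regimes announced, namely $(1-r)^\alpha$ for $\alpha<0$, $-\log(1-r)$ for $\alpha=0$, and a positive constant for $\alpha>0$.

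For the upper bound I would adapt the constructive Frequent Hypercyclicity Criterion. Fix a countable dense family $(P_i)_{i\geq 1}$ of polynomials and pairwise disjoint sets $A_i\subset\mathbb{N}$ with $\underline{d}(A_i)=\delta_i>0$, chosen with a separation property that prevents the building blocks $z^nP_i(z)$ from interfering, and set
\[
f(z)=\sum_{i\geq 1}\sum_{n\in A_i}\frac{z^n\,P_i(z)}{(n+1)^\alpha}.
\]
The separation forces $T_\alpha^nf\approx P_i$ for every $n\in A_i$, so that $f$ is frequently hypercyclic. The triangle inequality bounds
\[
M_1(f,r)\leq \sum_{i\geq 1}\|P_i\|_1\sum_{n\in A_i}\frac{r^n}{(n+1)^\alpha},
\]
and the analysis is then to evaluate the inner Abel-type sum (whose behavior matches the lower-bound rate) and to control the outer sum over $i$ after truncation at the natural scale $b_i\asymp 1/(1-r)$. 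Choosing the densities $\delta_i$ to decay like the reciprocal of $i\log i\,\log_2 i\cdots\log_\ell i$ (with an additional absorbable logarithmic factor to ensure summability) is what produces the refined iterated-logarithm factor $\log_\ell(-\log(1-r))$ in place of the cruder estimate obtained by naive choices.

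The main obstacle is the simultaneous calibration required in the construction: the densities $\delta_i$, the positions and spacings of the sets $A_i$, the enumeration of $(P_i)$ and the norms $\|P_i\|_1$ must be fixed jointly so that $f$ is genuinely frequently hypercyclic (each $A_i$ retains positive lower density and the tail errors remain negligible) while the triangle-inequality estimate on $M_1(f,r)$ captures precisely the iterated-logarithm factor. The lower-bound half, by contrast, is essentially immediate once the coefficient estimate $|a_n|\gtrsim n^{-\alpha}$ on a positive-density set has been combined with Hardy's inequality.
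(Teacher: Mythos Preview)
This statement is cited from \cite{MouMun3} rather than proved in the present paper, but the constructions carried out in Subsection~\ref{subsectionconstructive} and Section~\ref{sectioncritic} follow the same template, so that is the relevant benchmark. Your lower-bound argument via Hardy's inequality is sound and is precisely the $p=1$ substitute for the Parseval and Hausdorff--Young estimates used here for $p>1$.

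Your upper-bound construction, however, has a genuine gap. The claim that the inner sum $\sum_{n\in A_i} r^n/(n+1)^{\alpha}$ ``matches the lower-bound rate'' is off by a full factor of $(1-r)^{-1}$: for a set $A_i$ of positive lower density one has $\sum_{n\in A_i}(n+1)^{-\alpha}r^n$ of order $(1-r)^{\alpha-1}$ when $\alpha<0$ and of order $(1-r)^{-1}$ when $\alpha=0$, not $(1-r)^{\alpha}$ or $|\log(1-r)|$. Applying the triangle inequality to isolated blocks $z^nP_i(z)$ amounts to bounding $M_1$ by the $\ell^1$-norm of the coefficient sequence, and for coefficients of modulus comparable to $n^{-\alpha}$ on a positive-density set that $\ell^1$-norm is unavoidably of order $(1-r)^{\alpha-1}$; no tuning of the $\delta_i$ can recover the lost factor. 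What \cite{MouMun3} does---mirrored in this paper by Lemma~\ref{lemma_rud_shap} and the block polynomials of (\ref{poly_12})---is to pack roughly $2^n$ coefficients into a single dyadic block using the de~la~Vall\'ee~Poussin polynomials $p_N^*$ with $\|p_N^*\|_1\leq 3$, so that each block contributes only $O(2^{-n\alpha})$ to $M_1$ instead of $O(2^{n(1-\alpha)})$; summing over the $O(\log\frac{1}{1-r})$ relevant dyadic scales then gives the correct order, with the $\log_\ell$ refinement coming from the allocation of blocks among the targets $q_k$. This packing via special polynomials is the essential idea missing from your proposal.
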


It should be noted that the study of the growth of hypercyclic or frequently hypercyclic functions started with those related to the differentiation operator on $H(\mathbb{C})$ (see for instance \cite{BBG,Drasin, ge1, ge11}) but was also recently extended to the partial differentiation operator \cite{GST} or the Dunkl operator \cite{Ber2016}. Here, as a first step, we obtain sharp results on the permissible rates of $L^p$-growth of hypercyclic and $\mathcal{U}$-frequently hypercyclic functions for $T_{\alpha}$. On one hand, for hypercyclicity, for any $1\leq p\leq \infty$ we find that the rate of growth $(1-r)^{\min(\alpha,0)}$ turns out to be critical and hence, for any $1\leq p\leq \infty$, $\alpha=0$ is the critical exponent. Observe that in this case the critical exponent does not depend on $p$. We refer to Theorem \ref{hcalpha}. In particular this result states that for $1\leq p\leq\infty$ there is no hypercyclic function $f$ for $T_{\alpha}$ satisfying $\limsup_{r\rightarrow 1^{-}}((1-r)^{-\alpha}M_p(f,r))<+\infty$ if $\alpha\leq 0$ while if $\alpha>0$ there exist hypercyclic functions $f$ for $T_{\alpha}$ such that the average $M_p(f,r)$ is bounded. In passing Theorem \ref{hcalpha} gives a negative answer to a question of \cite{MouMun3} which asked if for $\alpha <0$ there is a frequently hypercyclic function $g_{\alpha}$ for $T_{\alpha}$ such that $\limsup_{r\rightarrow 1^-}( (1-r)^{-\alpha}M_1(g_{\alpha},r))<+\infty$. On the other hand, for $\mathcal{U}$-frequent hypercyclicity, we find the same critical exponent $\alpha=\frac{1}{\max(2,q)}$ as for frequent hypercyclicity. Therefore contrary to the previous case this exponent depends on $p$. Moreover we show that the $\mathcal{U}$-frequently hypercyclic functions and the frequently hypercyclic functions for $T_{\alpha}$ share the same admissible (and optimal) $L^p$-growth when $\alpha$ is different from the critical exponent, i.e. $\alpha\ne \frac{1}{\max(2,q)}$. Concerning the case $\alpha= \frac{1}{\max(2,q)}$ with $1\leq p\leq\infty$, we prove that every $\mathcal{U}$-frequently hypercyclic vector $f$ for $T_{\alpha}$ satisfies $\limsup_{r\rightarrow 1^{-}}M_p(f,r)=+\infty$, without a priori additional information on the growth of the function. We refer to Theorems \ref{ufhcalpha_no_critic} and \ref{ufhcalpha_no_criticp1}. Nevertheless several questions remain and need to be addressed. The first question that comes to mind is the following: what is the optimal boundary growth of $\mathcal{U}$-frequently hypercyclic functions for $T_{\alpha}$ when $\alpha$ is the critical exponent? Further if we go back to what was just said, we see that for $p=1$ the critical exponent is always equal to $0$ for the hypercyclic case, the $\mathcal{U}$-frequently case and the frequently hypercyclic case. But surprisingly for $p>1$ this critical exponent is equal to $\frac{1}{\max(2,q)}$ for the $\mathcal{U}$-frequently or frequently hypercyclic cases and is equal to zero for the hypercyclic case. Thus, as a second question, we can wonder about what happens between $\mathcal{U}$-frequent hypercyclicity and hypercyclicity. Why does the critical exponent go from $\frac{1}{\max(2,q)}$ to zero?  In order to understand this phenomenon, we introduce intermediate notions of linear dynamics between the $\mathcal{U}$-frequent hypercyclicity and the hypercyclicity: the $\mathcal{U}_{\beta^{\gamma}}$-frequent hypercyclicity related to notions of upper  weighted densities $\overline{d}_{\beta^{\gamma}}$, with $0\leq \gamma\leq 1$ a continuous parameter, where we replace in the definition of $\mathcal{U}$-frequent hypercyclicity the natural upper density $\overline{d}$ by $\overline{d}_{\beta^{\gamma}}$. Moreover for $\gamma=0$ the  $\mathcal{U}_{\beta^{0}}$-frequent hypercyclicity will be the frequent hypercyclicity and for $\gamma=1$ the  $\mathcal{U}_{\beta^{1}}$-frequent hypercyclicity will be the hypercyclicity. Further for any $0\leq \gamma\leq\gamma'\leq 1$ and for any subset $E\subset \mathbb{N}$, the following chain of inequalities $\overline{d}(E)\leq \overline{d}_{\beta^{\gamma}}(E)\leq\overline{d}_{\beta^{\gamma'}}(E)\leq\overline{d}_{\beta^{1}}(E)$ will show that the $\mathcal{U}_{\beta^{\gamma}}$-frequent hypercyclicity for $\gamma\in(0,1)$ furnish refined notions of linear dynamics between the $\mathcal{U}$-frequent hypercyclicity and the hypercyclicity. We refer the reader to the beginning of Section \ref{section_ubgamma} for the main definitions and properties. Similar notions of weaker densities have been recently studied in the context of linear dynamics (see for instance \cite{BMenetPP, BGr, ErEsMenet, ErMo1, ErMo2, Menet} and the references therein). In the present paper, we investigate the growth in terms of $L^p$-averages  of $\mathcal{U}_{\beta^{\gamma}}$-frequently hypercyclic functions for $T_{\alpha}$. In particular, for  $0<\gamma<1$, and for $p>1$ we find that the critical exponent is given by $\alpha =\frac{1-\gamma}{\max(2,q)}$. Hence let us observe that this critical exponent:
\begin{enumerate}[\textbullet]
\item tends to $\frac{1}{\max(2,q)}$ as $\gamma$ tends to zero, i.e. tends to the critical exponent for the $\mathcal{U}$-frequent hypercyclicity case;
\item tends to $0$ as $\gamma$ tends to $1$, i.e. tends to the critical exponent for the hypercyclicity case.
\end{enumerate}
These estimates thus allow to highlight \textit{a continuous path} between the rate of growth of hypercyclic and $\mathcal{U}$-frequently hypercyclic functions: the growth (in terms of $L^p$-averages) of a hypercyclic function for $T_{\alpha}$ continuously depends on the frequency of visits (measured by the densities $\overline{d}_{\beta^\gamma}$, $0\leq\gamma\leq 1$) of non-empty open subsets by its orbit under the action of $T_{\alpha}$. We also show that the estimates on the growth of $\mathcal{U}_{\beta^{\gamma}}$-frequently hypercyclic functions that we obtained are optimal. To do this, we apply a method based on the use of Rudin-Shapiro polynomials and inspired by a construction of frequently hypercyclic functions with optimal growth for differentiation operator on $H(\mathbb{C})$ due to Drasin and Saksman \cite{Drasin} and that has also been adapted for the proofs of Theorems \ref{anc_main_optimal1} and \ref{anc_main_optimal2} in \cite{MouMun2, MouMun3}. For all these results, we refer the reader to Theorems \ref{ubetafhcalpha_no_critic}, \ref{thmubetaopti} and \ref{udfhcalpha_no_criticp1}. Finally let us return to the first question mentioned above. In the last section, we answer it by showing that the optimal growth of $\mathcal{U}$-frequently and $\mathcal{U}_{\beta^{\gamma}}$-frequently hypercyclic functions for $T_{\alpha}$ coincides whenever $\alpha$ is the critical exponent: actually the $L^p$-growth can be arbitrarily slow as in the hypercyclic case. We refer to Theorem \ref{mainexpcritical}.\\

The paper is organized as follows. In Sections \ref{sectionhc} and \ref{sectionufhc} we establish the boundary behavior of hypercyclic functions and $\mathcal{U}$-frequently hypercyclic functions for $T_{\alpha}$ respectively. In Section \ref{section_ubgamma} we deal with the $\mathcal{U}_{\beta^{\gamma}}$-frequently hypercyclic functions for $T_{\alpha}$. In Section \ref{sectioncritic} we turn our attention to the specific case of critical exponent.\\

Throughout the paper, whenever $A$ and $B$ depend on some parameters, we will use the notation $A\lesssim B$ (resp. $A\gtrsim B$) to mean $A\leq CB$ (resp. $A\geq C B$) for some constant $C>0$ that does not depend on the involved parameters.

\section{Growth of hypercyclic functions}\label{sectionhc} In this section, we are going to establish the rate of growth of hypercyclic functions with respect to the weighted Taylor shift operator $T_{\alpha}$. To do this, inspired by the proofs of \cite[Theorem (A)]{ge1} and \cite[Theorem 3]{Ber2016}, where the authors are interested in the rate of growth of hypercyclic functions with respect to the Mac-Lane operator or the Dunkl operator respectively, we need an important tool in linear dynamics: the Universality Criterion.  Indeed a natural extension of the notion of hypercyclicity is the concept of universality. A sequence of continuous linear mappings $L_n:X\rightarrow Y$ between topological vector spaces $X,Y$ is said to be \textit{universal} whenever there exists a vector $x\in X$ such that the set $\{L_nx\ ;\ n\in\mathbb{N}\}$ is dense in $Y$. Such a vector $x$ is called a \textit{universal} vector for $(L_n)$. Observe that an operator $T:X\rightarrow X$ is hypercyclic if and only if the sequence $(T^n)$ is universal. The following result which is known as the Universality Criterion furnishes a sufficient condition for universality \cite{gethesis}. It is a refined version of Hypercyclicity Criterion \cite{gs,CKitai}.  

\begin{theorem} {\rm \textbf{(Universality Criterion)}} Assume that $X$ and $Y$ are topological vector spaces, such that $X$ is a Baire space and $Y$ is separable and metrizable. Let $L_j:X\rightarrow Y$ be a sequence of continuous linear mappings. Suppose that there are dense subsets $X_0$ of $X$ and $Y_0$ of $Y$ and mappings $S_j:Y_0\rightarrow X$ such that 
	\begin{enumerate}[(i)]
		\item for every $x\in X_0$, $L_jx\rightarrow 0$, 
		\item for every $y\in Y_0$, $S_jy\rightarrow 0$, 
		\item for every $y\in Y_0$, $(L_jS_j)y\rightarrow y$.
	\end{enumerate}
Then $(L_j)$ is universal and the set of universal vectors for $(L_j)$ is residual in $X$.
	
	\end{theorem}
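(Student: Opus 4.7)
The plan is to prove that the set of universal vectors is a residual $G_\delta$ subset of $X$ by invoking the Baire category theorem, which immediately yields the existence of a universal vector and hence universality of $(L_j)$. The hypotheses (i)--(iii) will enter precisely to verify the density condition required by Baire.

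First I would fix a countable base $(V_k)_{k\in\mathbb{N}}$ of the topology of $Y$, which exists since $Y$ is separable and metrizable. A vector $x\in X$ is universal for $(L_j)$ if and only if for every $k$ there is some $j$ with $L_j x\in V_k$. Hence the set of universal vectors equals
$$\mathcal{U}=\bigcap_{k\in\mathbb{N}}\bigcup_{j\in\mathbb{N}} L_j^{-1}(V_k),$$
and each set $L_j^{-1}(V_k)$ is open by continuity of $L_j$. Thus $\mathcal{U}$ is a $G_\delta$ set, and by the Baire category theorem applied to the Baire space $X$, it will be residual as soon as each open set $G_k:=\bigcup_{j} L_j^{-1}(V_k)$ is dense in $X$.

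The heart of the argument is to show this density. Fix $k$ and a nonempty open set $U\subset X$; I want to produce $x\in U$ and $j$ with $L_j x\in V_k$. Using the density of $X_0$ in $X$, pick $x_0\in X_0\cap U$, and using the density of $Y_0$ in $Y$, pick $y_0\in Y_0\cap V_k$. Define the candidate sequence
$$x_j:=x_0+S_j y_0\in X.$$
By hypothesis (ii), $S_j y_0\to 0$ in $X$, so the addition being continuous ensures $x_j\to x_0$; since $U$ is open and $x_0\in U$, eventually $x_j\in U$. By linearity of $L_j$,
$$L_j x_j=L_j x_0+(L_jS_j)y_0,$$
and by (i) the first term tends to $0$, while by (iii) the second term tends to $y_0\in V_k$; openness of $V_k$ then gives $L_j x_j\in V_k$ for all sufficiently large $j$. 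Picking any such $j$ yields the desired pair, proving $G_k\cap U\ne\emptyset$ and hence the density of $G_k$.

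There is no real obstacle here: the only subtle point is ensuring that the perturbation $S_j y_0$ is simultaneously small enough in $X$ (so that $x_j$ stays in $U$) while $L_jS_j y_0$ is close enough to $y_0$ in $Y$ (so that $L_j x_j$ lands in $V_k$). This is handled cleanly by the combination of (ii) and (iii), which is the whole point of introducing the auxiliary maps $S_j$. Finally, one concludes that $\mathcal{U}$ is a dense $G_\delta$, hence residual; any $x\in\mathcal{U}$ witnesses the universality of $(L_j)$.
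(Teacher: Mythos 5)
Your argument is correct and is the standard Baire-category proof of the Universality Criterion; the paper itself does not reprove this result but cites it (to Grosse-Erdmann's thesis), and the argument you give is exactly the one used there and in the standard references (e.g., Bayart--Matheron, Grosse-Erdmann--Peris). The key steps — reducing residuality to density of each $G_k=\bigcup_j L_j^{-1}(V_k)$ via Baire on the countable base $(V_k)$ of the separable metrizable space $Y$, and then establishing that density by the perturbation $x_j=x_0+S_jy_0$ with $x_0\in X_0\cap U$, $y_0\in Y_0\cap V_k$, using (ii) to keep $x_j$ in $U$ and (i), (iii) to push $L_jx_j$ into $V_k$ — are all in place and correctly invoked.
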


Now we are ready to obtain the critical rate of growth for hypercyclic functions with respect to the weighted Taylor shift operator $T_{\alpha}$. The following statement holds. 

\begin{theorem} \label{hcalpha}
	Let $1\leq p\leq\infty$. \begin{enumerate} \item Let $\alpha\leq 0.$
		\begin{enumerate}
		\item\label{assa} For any function $\varphi:[0,1)\rightarrow\mathbb{R}_+$ with $\varphi(r)\rightarrow \infty$ as $r\rightarrow 1^{-}$ there is a hypercyclic function $f$ for $T_{\alpha}$ with 
		$$M_p(f,r)\lesssim\varphi(r) (1-r)^{\alpha}\quad\hbox{for }0<r<1\hbox{ sufficiently close to }1.$$
		\item\label{assb} There is no hypercyclic function $f$ for $T_{\alpha}$ that satisfies, for $0<r<1$ 
		$$M_p(f,r)\leq C (1-r)^{\alpha},$$
		where $C>0$.
	\end{enumerate}
\item Let $\alpha>0$.
\begin{enumerate}
	\item \label{assa2}There is a hypercyclic function $f$ for $T_{\alpha}$ with
	$$M_p(f,r)\leq C$$
	for some $C>0$.
	\item \label{case2bhcalpha}For any function $\varphi:[0,1)\rightarrow\mathbb{R}_+$ with $\varphi(r)\rightarrow 0$ as $r\rightarrow 1^{-}$, there is no hypercyclic function $f$ for $T_{\alpha}$ that satisfies, for $0<r<1$ 
	$$M_p(f,r)\leq \varphi(r).$$
\end{enumerate}
\end{enumerate}

\end{theorem}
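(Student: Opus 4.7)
I would split the statement into its four sub-cases. Parts (1a) and (2a) are existence results which I would prove by the Universality Criterion quoted just before the theorem, applied in a suitable Banach space of analytic functions with the prescribed growth. Parts (1b) and (2b) are non-existence results and I would prove them by a coefficient estimate based on Cauchy's formula and on Hardy's convexity theorem, respectively. Two identities play a central role throughout:
$$T_\alpha^n f(z)=\sum_{k\geq 0} a_{k+n}\Bigl(\frac{k+n+1}{k+1}\Bigr)^\alpha z^k\quad\text{and}\quad\prod_{i=1}^n w_i(\alpha)=(n+1)^\alpha.$$

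For the positive parts I would take $Y=H(\D)$, $X_0=Y_0=$ polynomials, $L_j=T_\alpha^j$, and the right inverse $S_j$ defined by $S_j(z^k)=\bigl((k+1)/(k+j+1)\bigr)^\alpha z^{k+j}$ extended linearly. The telescoping above gives $L_jS_j=\mathrm{id}$ on polynomials, and $L_jP$ vanishes for $j>\deg P$, so conditions (i) and (iii) of the Universality Criterion are immediate. For case (2a), where $\alpha>0$, I would take $X=H^p(\D)$ if $1\leq p<\infty$ and the disc algebra if $p=\infty$; then $\|S_jP\|_X\leq\bigl((d+1)/(d+j+1)\bigr)^\alpha\|P\|_1\to 0$ since $\alpha>0$, which yields a residual set of bounded hypercyclic functions. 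For case (1a), where $\alpha\leq 0$, I would replace $\varphi$ by $\max(\varphi,1)$ and take $X$ to be the closure of the polynomials in the norm $\|f\|_X=\sup_{0<r<1}M_p(f,r)/(\varphi(r)(1-r)^\alpha)$. A triangle-inequality bound gives $M_p(S_jP,r)\lesssim_P(j+1)^{|\alpha|}r^j$, so it remains to show that $\sup_{0<r<1}(j+1)^{|\alpha|}r^j(1-r)^{|\alpha|}/\varphi(r)\to 0$ as $j\to\infty$. I would split this supremum at a threshold $r_j\to 1^-$ (for instance $r_j=1-1/\log j$): on $[0,r_j]$ the factor $r_j^j$ decays fast enough to absorb $(j+1)^{|\alpha|}$, while on $(r_j,1)$ the quantity $(j+1)^{|\alpha|}r^j(1-r)^{|\alpha|}$ is uniformly bounded in $r$ and $j$ (it is maximised near $r=1-|\alpha|/j$) and $1/\varphi(r)\leq 1/\varphi(r_j)\to 0$.

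For part (1b), the Cauchy estimate gives $|a_n|\leq r^{-n}M_p(f,r)\leq Cr^{-n}(1-r)^\alpha$; optimising at $r=n/(n+|\alpha|)$ yields $|a_n|\leq C'n^{|\alpha|}$, hence $|T_\alpha^n f(0)|=|a_n|(n+1)^\alpha\leq C''n^{|\alpha|+\alpha}=C''$, since $|\alpha|+\alpha=0$ for $\alpha\leq 0$. Boundedness of $\{T_\alpha^n f(0)\}_n$ contradicts the hypercyclicity of $f$, which would force this sequence to be dense in $\C$. Part (2b) is shorter: Hardy's convexity theorem tells us that $r\mapsto M_p(f,r)$ is non-decreasing on $(0,1)$, so for every fixed $r$,
$$M_p(f,r)\leq\lim_{s\to 1^-}M_p(f,s)\leq\lim_{s\to 1^-}\varphi(s)=0,$$
which forces $f\equiv 0$, and the zero function is not hypercyclic.

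The main obstacle is case (1a): condition (ii) of the Universality Criterion has to be verified with only the qualitative information $\varphi\to\infty$ at disposal, so the threshold $r_j$ has to be chosen to balance the exponential decay of $r_j^j$ against the (possibly very slow) growth of $\varphi$. The density of the polynomials in $X$ and the continuity of $T_\alpha^j\colon X\to H(\D)$ are automatic from the definition of $X$.
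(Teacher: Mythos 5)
Your overall strategy matches the paper's: the Universality Criterion for the existence assertions (1a),(2a), and a coefficient estimate for (1b). Your implementation differs in some details -- for (1a) you take $X$ to be the closure of polynomials in the norm $\sup_r M_p(f,r)/(\varphi(r)(1-r)^\alpha)$ and verify condition (ii) with an explicit threshold $r_j=1-1/\log j$, whereas the paper uses a slightly different space (a supremum of tail-seminorms) and verifies (ii) by showing the maximizers $r_j$ of $h_j(r)=r^j(1-r)^{-\alpha}/\varphi(r)$ form an increasing sequence and splitting on whether they converge to $1$ or not; for (2a) you use $H^p(\D)$ instead of reducing to $p=\infty$ -- but these are cosmetic variations of the same idea and both work.

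The genuine divergence is in (2b). The paper again uses a Cauchy estimate $|a_n|\leq\varphi(r)/r^n$ with a carefully chosen sequence $(r_n)$ to conclude that $T_\alpha^n f(0)=a_n(n+1)^\alpha$ is bounded, hence $f$ is not hypercyclic. You instead invoke Hardy's convexity theorem: $r\mapsto M_p(f,r)$ is non-decreasing, so $M_p(f,r)\leq\lim_{s\to1^-}\varphi(s)=0$ for every $r$, forcing $f\equiv0$. Your argument is shorter, requires no assumption on $\alpha$, and gives a strictly stronger conclusion ($f$ is the zero function, not merely a function with bounded Taylor-coefficient orbit). This is a nice simplification. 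One small gap to flag: in (1b) your optimising choice $r=n/(n+|\alpha|)$ degenerates when $\alpha=0$ (it gives $r=1$); that sub-case must be handled separately, though it is trivial -- taking $r\to1^-$ in $|a_n|\leq Cr^{-n}$ directly gives $|a_n|\leq C$. The paper avoids this by choosing $r=e^{-1/(n+1)}$, which works uniformly for all $\alpha\leq0$.
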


\begin{proof} Since we have for $1\leq p\leq l$
	$$M_p(f,r)\leq M_{l}(f,r),\quad \hbox{ for } 0<r<1,$$ 
	it suffices to prove assertions (\ref{assa}) and (\ref{assa2}) for $M_{\infty}(f,r)$ and assertions (\ref{assb}) and (\ref{case2bhcalpha}) for $M_1(f,r)$. \\
	
	\begin{enumerate}
		\item We begin by the case $\alpha\leq 0$. \\ 
	First we can assume without loss of generality that the function $\varphi$ is increasing and continuous with $\varphi(0)>0$. Let us consider the space $X$ of all functions $f$ in $H(\mathbb{D})$ with $f(z)=\sum\limits_{k\geq 0}a_k z^k$ satisfying for every $n\geq 0$ $\rho_n(f)<+\infty$ and $\rho_n(f)\rightarrow 0$ as $n\rightarrow +\infty$, where 
	$$\rho_n(f)=\sup_{\vert z\vert < 1}\left\{\left\vert\sum_{k=n}^{+\infty}a_k z^k\right\vert (1-\vert z\vert)^{-\alpha}[\varphi(\vert z\vert)]^{-1}\right\}.$$
	It is easy to check that $X$ endowed with the norm $\Vert .\Vert=\sup_{n} \rho_n(.)$ is a Banach space. Therefore $(X,\Vert .\Vert)$ is a Baire space. For all integer $j$, let $L_j:X\rightarrow H(\mathbb{D})$ be the operator given by $L_jf=T^jf$. Clearly $(L_j)$ is a sequence of continuous linear operators. We choose $X_0=Y_0=\mathcal{P}$ the set of polynomials. The set $\mathcal{P}$ is dense in $H(\mathbb{D})$. Moreover, setting the polynomial $s_N(f)=\sum_{k=0}^N a_k z^k$ we get 
	$$\rho_n(f-s_N(f))=\left\{\begin{array}{ll}\rho_n(f)& \hbox{ for }n\geq N+1\\
	\rho_{N+1}(f)& \hbox{ for }n\leq N\end{array}\right.$$ 
	which implies $\Vert f-s_N(f)\Vert=\sup_{n\geq N+1}\rho_n(f)\rightarrow 0$ as $N$ tends to infinity. Hence $\mathcal{P}$ is dense in $X$. Then we define the operators $S_j$ as follows
	$$S_j:\mathcal{P}\rightarrow X,\quad S_j(\sum_{k=0}^n a_k z^k)=\sum_{k=0}^n a_k\frac{(k+1)^{\alpha}}{(k+j+1)^{\alpha}} z^{k+j}.$$
	Clearly we have, for all $P\in \mathcal{P}$, 
	$$L_j(P)\rightarrow 0,\hbox{ as }j\rightarrow +\infty,\quad \hbox{ and } L_jS_j(P)=P.$$
	Now we prove that, for all $P\in \mathcal{P},$ $S_j(P) \rightarrow 0$, as $j\rightarrow +\infty$. Since $S_j(z^k)=(k+1)^{\alpha}S_{j+k}(1)$, it suffices to show that $S_j(1)\rightarrow 0$, as $j\rightarrow +\infty$. To do this, observe that
	$$\Vert S_j(1)\Vert=\sup_{0<r<1}\frac{r^j(1-r)^{-\alpha}}{(j+1)^{\alpha}\varphi(r)}.$$ 
	Let us define $h_j:[0,1)\rightarrow \mathbb{R}_+$ given by $h_j(r)=\frac{r^j(1-r)^{-\alpha}}{\varphi(r)}$. We have  $h_j(0)=0=\lim\limits_{r\rightarrow 1^{-}} h_j(r)$. Let $0<r_j<1$ with $h_j(r_j)=\sup_{0<r<1}\frac{r^j(1-r)^{-\alpha}}{\varphi(r)}$. If $r_{j+1}< r_j,$ we get 
	$$h_{j+1}(r_{j+1})=r_{j+1}h_j(r_{j+1})< r_j h_j(r_{j+1})\leq r_j h_j(r_j)=h_{j+1}(r_j)$$
	which gives a contradiction. Hence the sequence $(r_j)$ is increasing. If $r_j\rightarrow \gamma$ with $\gamma<1$, then 
	$$\Vert S_j(1)\Vert=(j+1)^{-\alpha}h_j(r_j)\leq (j+1)^{-\alpha}\frac{\gamma^j}{\varphi(0)}\rightarrow 0,\hbox{ as }j\rightarrow +\infty.$$
	Otherwise $r_j\rightarrow 1$ and 
	$$\Vert S_j(1)\Vert\leq \frac{(j+1)^{-\alpha}}{\varphi(r_j)}\left(\frac{j}{j-\alpha}\right)^j
	\left(1-\frac{j}{j-\alpha}\right)^{-\alpha}\rightarrow 0,\hbox{ as }j\rightarrow +\infty.$$
	Thus we have $\Vert S_j(1)\Vert\rightarrow 0$ as $j$ tends to infinity. We apply the universality criterion to obtain universal elements for the sequence $(L_j)$ that are hypercyclic functions for $T_{\alpha}$ satisfying the growth condition required. 
	\\
	
	For assertion (\ref{assb}), assume that $f=\sum\limits_{k\geq 0}a_kz^k$ is a function in $H(\mathbb{D})$ with, for all $0<r <1$, $M_1(f,r)\leq C (1-r)^{\alpha}$, for some $C>0$. By Cauchy estimates we get 
	$$\vert a_n\vert\leq \frac{M_1(f,r)}{r^n}.$$
	We obtain, for all $n\geq 0$ and all $0<r<1$,
	$$\vert a_n w_1(\alpha)\dots w_n(\alpha)\vert\leq C \frac{\vert w_1(\alpha)\dots w_n(\alpha)\vert}{r^n}(1-r)^{\alpha}.$$ 
	Hence we get for all $n\geq 0$,
	$$\vert a_n (n+1)^{\alpha}\vert\leq C \frac{(n+1)^{\alpha}}{e^{-n/(n+1)}}(1-e^{-1/(n+1)})^{\alpha}$$ 
	which is bounded. Hence $f$ cannot be hypercyclic for $T_{\alpha}$.\\
	
\item Now let us consider the case $\alpha>0$.\\ 
First we can assume without loss of generality that the function $\varphi$ is increasing and continuous with $\varphi(0)>0$. Let us consider the Banach space  $(H^{\infty}(\mathbb{D}),\Vert .\Vert)$ 
$$H^{\infty}(\mathbb{D})=\left\{f\in H(\mathbb{D})\ ;\  \Vert f\Vert:=\sup_{0<r<1}M_{\infty}(f,r)<\infty      \right\}$$
which is continuously embedded in $H(\mathbb{D})$. We set $X$ the closure of the polynomials in $H^{\infty}(\mathbb{D})$. Let us define the sequences $(L_j)$ and $(S_j)$ of linear operators as in the previous case. 
Clearly we have, for all $P\in \mathcal{P}$, where $\mathcal{P}$ is the set of polynomials, 
$$L_j(P)\rightarrow 0,\hbox{ as }j\rightarrow +\infty,\quad \hbox{ and } L_jS_j(P)=P.$$
Now we prove that, for all $P\in \mathcal{P},$ $S_j(P) \rightarrow 0$, as $j\rightarrow +\infty$. To do this, it suffices to show that $S_j(1)\rightarrow 0$, as $j\rightarrow +\infty$. Since  
$$\Vert S_j(1)\Vert\leq\frac{1}{(j+1)^{\alpha}}$$ 
and $\alpha>0$ we have $\Vert S_j(1)\Vert\rightarrow 0$ as $j$ tends to infinity. We apply the universality criterion to obtain universal elements for the sequence $(L_j)$. These universal vectors are clearly hypercyclic functions for $T_{\alpha}$ satisfying the growth condition required. 
\\

For assertion (\ref{case2bhcalpha}), assume that $f=\sum\limits_{k\geq 0}a_kz^k$ is a function in $H(\mathbb{D})$ with, for all $0<r <1$, $M_1(f,r)\leq \varphi(r)$, where  $\varphi:[0,1)\rightarrow\mathbb{R}_+$ is a function such that $\varphi(r)\rightarrow 0$ as $r\rightarrow 1^{-}$. We obviously may assume that $\varphi$ is continuous and decreasing. By Cauchy estimates we get 
$$\vert a_n\vert\leq \frac{\varphi(r)}{r^n}.$$
We obtain, for all $n\geq 0$ and all $0<r<1$,
$$\vert a_n (n+1)^{\alpha}\vert\leq \frac{(n+1)^{\alpha}\varphi(r)}{r^n}.$$  
Let us choose a sequence $(r_n)$ such that $r_n\geq \max(1-1/n,\varphi^{-1}\left((n+1)^{-\alpha}\right))$. Hence we get for all $n\in\mathbb{N}$,
$$\vert a_n (n+1)^{\alpha}\vert\leq \frac{(n+1)^{\alpha}\varphi(r_n)}{r_n^n}\leq e^{-n\log(1-1/n)}$$ 
which is bounded. Hence $f$ cannot be hypercyclic for $T_{\alpha}$.
	\end{enumerate}
\end{proof}

\begin{remark}{\rm For $\alpha< 0$, Theorem \ref{hcalpha} ensures that every hypercyclic function $f$ for $T_{\alpha}$ satisfies 
		$$\displaystyle\limsup_{r\rightarrow 1^{-}}\left[(1-r)^{-\alpha}M_1(f,r)\right]=+\infty.$$ 
		Since a frequent hypercyclic function is necessarily hypercyclic, this observation gives a negative answer to the first part of the question from Remark 4.5 of \cite{MouMun3} which asked if for $\alpha <0$ there is a frequently hypercyclic function $g_{\alpha}$ for $T_{\alpha}$ such that $\limsup\limits_{r\rightarrow 1^-}( (1-r)^{-\alpha}M_1(g_{\alpha},r))<+\infty$. 
}
	
\end{remark}

\vskip10mm

\section{Growth of $\mathcal{U}$-frequently hypercyclic functions} \label{sectionufhc} 
In this section we are interested in the growth of $\mathcal{U}$-frequently hypercyclic functions for $T_{\alpha}$. First of all, in the sequel we will need the following easy lemmas. 

\begin{lemma}\label{lemme_estim_preli} Let $N\in\mathbb{N}$. Let $A_N$ be a subset of $\{1,\dots,N\}$. For all $\gamma\in\mathbb{R}\setminus\{1\}$ the following estimate holds
	$$\sum_{k\in A_N} (k+1)^{\gamma}\geq\left\{\begin{array}{ll}
	\displaystyle\frac{(\#A_N+1)^{\gamma +1}-1}{\gamma+1}&\hbox{ if }\gamma \geq 0,\\
	\displaystyle\frac{(N+2)^{\gamma+1}}{\gamma+1}\left(1-\left(1-\frac{\#A_N}{N+2}\right)^{\gamma+1}\right)&\hbox{ if }\gamma < 0.\end{array}\right.
	$$
\end{lemma}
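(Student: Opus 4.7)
The plan is to reduce the inequality to the worst case (the arrangement of $A_N$ minimizing the sum for fixed cardinality) and then close the gap with a standard integral comparison. Set $m=\#A_N$. Since $k\mapsto (k+1)^\gamma$ is monotone, for any $A_N\subset\{1,\dots,N\}$ of cardinality $m$ the quantity $\sum_{k\in A_N}(k+1)^\gamma$ is minimized by a specific extremal $A_N$: namely $A_N=\{1,\dots,m\}$ when $\gamma\geq 0$ (the function is non-decreasing) and $A_N=\{N-m+1,\dots,N\}$ when $\gamma<0$ (the function is decreasing). This rearrangement step reduces the problem to a single arithmetic estimate in each case.

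For $\gamma\geq 0$, after shifting the index by $1$, the remaining task is to estimate
\[
\sum_{k=1}^{m}(k+1)^\gamma=\sum_{j=2}^{m+1}j^\gamma.
\]
Since $x\mapsto x^\gamma$ is non-decreasing on $[1,+\infty)$, I would bound each term $j^\gamma$ from below by $\int_{j-1}^{j}x^\gamma\,dx$ and sum from $j=2$ to $m+1$, obtaining $\int_{1}^{m+1}x^\gamma\,dx=\frac{(m+1)^{\gamma+1}-1}{\gamma+1}$, which is exactly the claimed lower bound.

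For $\gamma<0$, I would proceed symmetrically. The extremal sum is
\[
\sum_{k=N-m+1}^{N}(k+1)^\gamma=\sum_{j=N-m+2}^{N+1}j^\gamma.
\]
Since $x\mapsto x^\gamma$ is decreasing on $(0,+\infty)$, each term $j^\gamma$ dominates $\int_{j}^{j+1}x^\gamma\,dx$, so the sum is bounded from below by $\int_{N-m+2}^{N+2}x^\gamma\,dx=\frac{(N+2)^{\gamma+1}-(N-m+2)^{\gamma+1}}{\gamma+1}$. Factoring out $(N+2)^{\gamma+1}/(\gamma+1)$ rewrites this as $\frac{(N+2)^{\gamma+1}}{\gamma+1}\bigl(1-(1-\frac{m}{N+2})^{\gamma+1}\bigr)$, which matches the claim.

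There is no serious obstacle here; the only delicate point is to note that the final expression in the $\gamma<0$ case is genuinely positive in both sub-regimes $-1<\gamma<0$ and $\gamma<-1$: in the former case $\gamma+1>0$ and $1-(1-\frac{m}{N+2})^{\gamma+1}>0$, while in the latter both factors change sign simultaneously. This guarantees that the single closed-form bound covers both sub-cases uniformly, avoiding any case split in the statement.
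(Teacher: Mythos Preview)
Your proof is correct and follows essentially the same approach as the paper: reduce to the extremal set via monotonicity of $k\mapsto(k+1)^\gamma$, then apply an integral comparison. The only cosmetic difference is that the paper keeps the integrand as $(t+1)^\gamma$ (integrating over $[0,\#A_N]$ and $[N-\#A_N+1,N+1]$ respectively) rather than substituting $x=k+1$ as you do; your added remark on the sign of the $\gamma<0$ bound is a nice clarification not present in the paper.
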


\begin{proof} We begin by the case $\gamma \geq 0$. We write
	$$\sum_{k\in A_N} (k+1)^{\gamma}\geq \sum_{k=1}^{\#A_N} (k+1)^{\gamma}\geq \int_0^{\#A_N}(t+1)^{\gamma}dt,$$
	which gives the announced result.\\
	For $\gamma<0$ with $\gamma\ne -1$, we obtain in an analogue way
	$$\sum_{k\in A_N} (k+1)^{\gamma}\geq \sum_{k=N-\#A_N+1}^{N} (k+1)^{\gamma}\geq \int_{N-\#A_N+1}^{N+1}(t+1)^{\gamma}dt,$$
	which allows to finish the proof.	
\end{proof}

\begin{lemma}\label{Abel_Transform} Let $(u_k)$ and $(v_k)$ be two sequences of non-negative real numbers. Assume that $(v_k)$ is decreasing. For any increasing sub-sequence $(N_j)\subset\mathbb{N}$, the following inequality holds, for all $l\geq 1$:
	$$\sum_{k=1+N_0}^{N_l}u_kv_k\geq S_{N_l}v_{N_l}-S_{N_0}v_{N_0}+\sum_{j=1}^l S_{N_{j-1}}(v_{N_{j-1}}-v_{N_j}),$$
	with $S_N=\sum\limits_{k=1}^N u_k$.
\end{lemma}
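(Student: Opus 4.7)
The plan is a two-step argument: a block-wise lower bound obtained from the monotonicity of $(v_k)$, followed by a summation by parts (Abel transform) to rearrange the result into the claimed form.

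First I partition the summation range into the consecutive blocks $(N_{j-1},N_j]$ for $j=1,\dots,l$. On each such block, since $v$ is non-increasing and $k\leq N_j$, we have $v_k\geq v_{N_j}$, so that
$$\sum_{k=N_{j-1}+1}^{N_j}u_k v_k\;\geq\; v_{N_j}\sum_{k=N_{j-1}+1}^{N_j}u_k \;=\; v_{N_j}\bigl(S_{N_j}-S_{N_{j-1}}\bigr).$$
Here I use crucially that all the $u_k$ are non-negative, so the inner sum is indeed $S_{N_j}-S_{N_{j-1}}$ and the inequality is preserved. Summing over $j=1,\dots,l$ yields
$$\sum_{k=N_0+1}^{N_l}u_k v_k\;\geq\;\sum_{j=1}^l v_{N_j}\bigl(S_{N_j}-S_{N_{j-1}}\bigr).$$

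Next I apply Abel's summation by parts to the right-hand side. With the abbreviations $a_j=v_{N_j}$ and $b_j=S_{N_j}$, a standard rearrangement gives
$$\sum_{j=1}^l a_j(b_j-b_{j-1})\;=\;a_l b_l - a_1 b_0 + \sum_{j=1}^{l-1}(a_j-a_{j+1})b_j,$$
i.e.
$$\sum_{j=1}^l v_{N_j}(S_{N_j}-S_{N_{j-1}})\;=\;S_{N_l}v_{N_l}-S_{N_0}v_{N_1}+\sum_{j=1}^{l-1}S_{N_j}(v_{N_j}-v_{N_{j+1}}).$$
Finally I reindex the last sum by $i=j+1$ and absorb the term $-S_{N_0}v_{N_1}$ by writing $-S_{N_0}v_{N_1}=-S_{N_0}v_{N_0}+S_{N_0}(v_{N_0}-v_{N_1})$, the second piece being exactly the $j=1$ term of $\sum_{j=1}^l S_{N_{j-1}}(v_{N_{j-1}}-v_{N_j})$. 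This produces the right-hand side stated in the lemma.

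Neither step is difficult; the only real care needed is bookkeeping in the Abel transform so that the boundary terms $S_{N_l}v_{N_l}$ and $-S_{N_0}v_{N_0}$ emerge correctly and the remaining telescoping terms match $\sum_{j=1}^l S_{N_{j-1}}(v_{N_{j-1}}-v_{N_j})$ after the index shift. Note also that the hypothesis $v_k\geq 0$ is not used in the manipulation itself but is implicit in the fact that $(v_k)$ is a sequence of non-negative reals, which together with the monotonicity guarantees that all the differences $v_{N_{j-1}}-v_{N_j}$ appearing on the right-hand side are non-negative, consistent with the inequality being informative.
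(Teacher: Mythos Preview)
Your proof is correct. Both your argument and the paper's rest on the same two ingredients---Abel summation by parts and the monotonicity hypotheses---but the order of operations is reversed. The paper applies Abel summation at the fine level of the index $k$ to obtain the exact identity
\[
\sum_{k=1+N_0}^{N_l}u_kv_k=S_{N_l}v_{N_l}-S_{N_0}v_{N_0}+\sum_{j=1}^l\sum_{k=N_{j-1}}^{N_j-1}S_k(v_k-v_{k+1}),
\]
and then uses $S_k\geq S_{N_{j-1}}$ on each inner block (with $v_k-v_{k+1}\geq 0$) to telescope down to the stated bound. You instead bound each block first via $v_k\geq v_{N_j}$, reducing to the coarse sum $\sum_{j=1}^l v_{N_j}(S_{N_j}-S_{N_{j-1}})$, and only then perform Abel summation over the finitely many indices $j$. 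Both routes are equally short; yours has the minor advantage that the Abel step is over $l$ terms rather than $N_l$ terms, so the bookkeeping is slightly lighter.
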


\begin{proof} Observe that $(S_N)$ is increasing and $(v_k)$ is decreasing. Thus by using a summation by parts we derive
	$$\begin{array}{rcl}\displaystyle\sum_{k=1+N_0}^{N_l}u_kv_k=\sum_{j=1}^l\sum_{1+N_{j-1}}^{N_j}u_kv_k&=&\displaystyle\sum_{j=1}^l\left(S_{N_j}v_{N_j}-S_{N_{j-1}}v_{N_{j-1}}+\sum_{k=N_{j-1}}^{N_j-1}S_k(v_k-v_{k+1})\right)\\
	&=&\displaystyle S_{N_l}v_{N_l}-S_{N_0}v_{N_0}+\sum_{j=1}^l\sum_{k=N_{j-1}}^{N_j-1}S_k(v_k-v_{k+1})\\&\geq & \displaystyle S_{N_l}v_{N_l}-S_{N_0}v_{N_0}+\sum_{j=1}^l S_{N_{j-1}}(v_{N_{j-1}}-v_{N_j}).\end{array}$$
\end{proof}

We are ready to establish the boundary behavior of $\mathcal{U}$-frequently hypercyclic functions for $T_{\alpha}$. Actually we are going to prove that these functions share the same optimal growth as frequently hypercyclic functions except in the case where $\alpha$ is the critical exponent, for which we will show in Section \ref{sectioncritic} that the growth can be arbitrarily slow. We begin by the case $p>1$.

\begin{theorem}\label{ufhcalpha_no_critic}
	Let $f$ be a $\mathcal{U}$-frequently hypercyclic function for the operator $T_{\alpha}$ and $1< p\leq\infty$. Then the following estimates hold
	\begin{align*}&\limsup_{r\rightarrow 1^{-}}\left((1-r)^{\frac{1}{\max(2,q)}-\alpha}M_p(f,r)\right)>0,\quad\hbox{ if }\alpha<\frac{1}{\max(2,q)},\\
	&\limsup_{r\rightarrow 1^{-}}\left(M_p(f,r)\right)=+\infty,\quad\hbox{ if }\alpha=\frac{1}{\max(2,q)},\\
	&\limsup_{r\rightarrow 1^{-}}M_p(f,r)>0,\quad\hbox{ if }\alpha>\frac{1}{\max(2,q)}.
	\end{align*}
	For $\alpha\ne\frac{1}{\max(2,q)}$, these results are optimal in the following sense: for all $p>1$  there exists a $\mathcal{U}$-frequently hypercyclic function for $T_{\alpha}$ such that, for every $0<r<1$, 
	$$M_p(f,r)\lesssim\left\{\begin{array}{ll} (1-r)^{\alpha-\frac{1}{\max(2,q)}}& \hbox{ if } \alpha<\frac{1}{\max(2,q)}\\1&
	\hbox{ if } \alpha>\frac{1}{\max(2,q)}.\end{array}\right.$$
\end{theorem}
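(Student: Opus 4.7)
My plan splits into two tasks: the lower bounds (the main content) and the optimality. For optimality, nothing new is needed: in the regime $\alpha\ne 1/\max(2,q)$, Theorem \ref{anc_main_optimal1} produces frequently hypercyclic functions with exactly the stated growth, and every frequently hypercyclic function is a fortiori $\mathcal U$-frequently hypercyclic. So I focus on the lower estimates. The starting point is an arithmetic consequence of $\mathcal U$-frequent hypercyclicity: applying the definition to the open set $U=\{g\in H(\mathbb D):|g(0)-2|<1/2\}$ and using $T_\alpha^n f(0)=a_n(n+1)^\alpha$ (from the telescoping product $\prod_{k=1}^n(1+1/k)^\alpha=(n+1)^\alpha$), I obtain a set $A\subset\mathbb N$ with $\overline d(A)=\delta>0$ on which $|a_n|\ge c(n+1)^{-\alpha}$ for some $c>0$.

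To exploit this, write $s=\max(2,q)$ and combine two dual inequalities into the single bound $M_p(f,r)^s\gtrsim\sum_{k\in A}(k+1)^{-\alpha s}r^{sk}$: for $p\ge 2$ this comes from Parseval together with $M_p\ge M_2$, and for $1<p<2$ from the Hausdorff--Young inequality applied to $f_r(e^{i\theta})$, which gives $\bigl(\sum_k|a_k|^qr^{qk}\bigr)^{1/q}\le M_p(f,r)$. In the sub-critical case $\alpha<1/s$, I then select a subsequence $(N_\ell)$ along which $\#(A\cap[1,N_\ell])/N_\ell\to\delta$, take $r_\ell=1-1/N_\ell$ so that $r_\ell^{sN_\ell}$ stays bounded below, and apply Lemma \ref{lemme_estim_preli} with $\gamma=-\alpha s\in(-1,0)$ to obtain $\sum_{k\in A\cap[1,N_\ell]}(k+1)^{-\alpha s}\gtrsim N_\ell^{1-\alpha s}$. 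This yields $M_p(f,r_\ell)\gtrsim N_\ell^{1/s-\alpha}=(1-r_\ell)^{\alpha-1/s}$, which is exactly the desired lower bound.

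The super-critical case $\alpha>1/s$ is immediate: $M_p(f,\cdot)$ is increasing and $f\ne 0$, so $\limsup_{r\to 1^-}M_p(f,r)\ge M_p(f,1/2)>0$. For the critical case $\alpha=1/s$, I will argue by contradiction. If $\sup_r M_p(f,r)<\infty$, the dual inequality above forces $\sum_k|a_k|^s<\infty$, hence $\sum_{k\in A}(k+1)^{-1}<\infty$; but a short elementary argument shows that any $A\subset\mathbb N$ with tail sum $\sum_{k\in A,\,k>N_0}(k+1)^{-1}<\varepsilon$ must satisfy $\#(A\cap(N_0,N])\le\varepsilon(N+1)$ for every $N>N_0$, forcing $\overline d(A)\le\varepsilon$ and hence $\overline d(A)=0$, contradicting $\overline d(A)>0$. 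The main obstacle I anticipate is not any single step but rather unifying the two branches $p\ge 2$ and $1<p<2$ through the exponent $s=\max(2,q)$, and coordinating the subsequence $(N_\ell)$ with the choice $r_\ell=1-1/N_\ell$ so that both the tail factor $r_\ell^{sN_\ell}$ and the density factor coming from Lemma \ref{lemme_estim_preli} genuinely survive the passage to the limit.
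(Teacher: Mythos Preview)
Your proposal is correct and follows the paper's line in the sub- and super-critical regimes (Parseval/Hausdorff--Young plus Lemma~\ref{lemme_estim_preli}, and optimality via Theorem~\ref{anc_main_optimal1}), but handles the critical case $\alpha=1/\max(2,q)$ by a genuinely different and more elementary argument. The paper, after arranging an extra growth condition $C(N_{l+1}+1)\ge N_l+1$ on the subsequence, applies the Abel-type Lemma~\ref{Abel_Transform} with $u_k=|a_k|^2$, $v_k=1/(k+1)$ to produce a telescoping sum and obtain the quantitative bound $[M_p(f,r_l)]^{\max(2,q)}\gtrsim l$. Your contradiction argument bypasses both the subsequence condition and Lemma~\ref{Abel_Transform}: boundedness of $M_p$ forces $\sum_k|a_k|^{\max(2,q)}<\infty$, hence $\sum_{k\in A}(k+1)^{-1}<\infty$, and then the elementary tail estimate $\#(A\cap(N_0,N])\le\varepsilon(N+1)$ yields $\overline d(A)=0$. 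This is shorter and requires no auxiliary lemma, at the cost of losing the quantitative $\gtrsim l$ information (which the statement does not need anyway). One small omission: in the sub-critical case you only treat $\gamma=-\alpha s\in(-1,0)$, i.e.\ $0<\alpha<1/s$; for $\alpha\le 0$ one uses the $\gamma\ge 0$ branch of Lemma~\ref{lemme_estim_preli} (or the trivial bound $(k+1)^{-\alpha s}\ge 1$), which the paper does explicitly. Your super-critical argument ($f\ne 0$, $r\mapsto M_p(f,r)$ increasing) is also more direct than the paper's appeal to Theorem~\ref{hcalpha}.
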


\begin{proof}
We write $f=\sum\limits_{k\geq 0}\frac{a_k}{(k+1)^{\alpha}}z^k.$ Since $f$ is $\mathcal{U}$-frequently hypercyclic there exists an increasing sub-sequence $(n_k)\subset\mathbb{N}$ with positive upper density such that for all $k\geq 1$
$$\vert T_{\alpha}^{n_k}f(0)-3/2\vert=\vert a_{n_k}-3/2\vert <1/2.$$
We get, for all $k\geq 1$, $\vert a_{n_k}\vert \geq 1$. Set $I=((n_k))$ and for all $N\geq 1$, $I_N=I\cap\{1,\dots,N\}$. The hypothesis $\overline{d}(I)>0$ ensures that there exist $0<C<1$ and an increasing sequence $(N_l)$ of positive integers such that 
\begin{equation}\label{equlimsup}
\#I_{N_l}\geq C N_l.
\end{equation}
Up to take a sub-sequence, we can also assume that 
\begin{equation}\label{equlimsup00}
C (N_{l+1}+1)\geq N_l +1 .
\end{equation}
Let us consider, for all $l\geq 1$, $1-\frac{1}{N_l-1}\leq r_l<1-\frac{1}{N_l}$. Thus we derive
\begin{equation}\label{encNl}
N_l-1\leq \frac{1}{1-r_l}<N_l.
\end{equation}
	\begin{enumerate}
		\item \textbf{Case $2\leq p\leq\infty$:}\\
		Jensen'inequality and Parseval's Theorem give
		$$[M_p(f,r_l)]^2\geq [M_2(f,r_l)]^2=\sum_{k\geq 0}\frac{\vert a_k\vert^2}{(k+1)^{2\alpha}}r_l^{2k}\geq \sum_{k=1}^{N_l}\frac{\vert a_k\vert^2}{(k+1)^{2\alpha}}r_l^{2k}.$$
	Thus we deduce
		\begin{equation}\label{minor_princ}
		[M_p(f,r_l)]^2\geq \displaystyle\left(1-\frac{1}{N_l-1}\right)^{2N_l}\sum_{k=1}^{N_l}\frac{\vert a_k\vert^2}{(k+1)^{2\alpha}}
		\gtrsim  \sum_{k=1}^{N_l}\frac{\vert a_k\vert^2}{(k+1)^{2\alpha}}
		\end{equation}
	and using the inequality $\vert a_k\vert\geq 1$ for $k\in I_{N_l}$
	\begin{equation}\label{minor_trivial}[M_p(f,r_l)]^2\gtrsim  \sum_{k\in I_{N_l}}\frac{1}{(k+1)^{2\alpha}}.
	\end{equation}
	
		\begin{enumerate}
			\item \textit{Case $\alpha \leq 0$}: Combining Lemma \ref{lemme_estim_preli} with (\ref{equlimsup}), (\ref{encNl}) and (\ref{minor_trivial}) we get
				$$[M_p(f,r_l)]^2\gtrsim  N_l^{-2\alpha +1}\geq (1-r_l)^{2\alpha-1}.$$
				
				\item \textit{Case $0<\alpha <1/2$}: using (\ref{minor_trivial}) and Lemma \ref{lemme_estim_preli} again, we get 
					$$[M_p(f,r_l)]^2 \geq \frac{(2+N_l)^{-2\alpha+1}}{-2\alpha+1}\left(1-\left( 1-\frac{\#I_{N_l}}{2+N_l}\right)^{-2\alpha+1}\right).$$ 
The inequality (\ref{equlimsup}) ensures $\left( 1-\frac{\#I_{N_l}}{2+N_l}\right)^{-2\alpha+1}\leq\left( 1-\frac{CN_l}{2+N_l}\right)^{-2\alpha+1}$. We deduce by using (\ref{equlimsup}) and (\ref{encNl}) again	
$$[M_p(f,r_l)]^2\gtrsim  N_l^{-2\alpha +1}\geq (1-r_l)^{2\alpha-1}.$$
Hence we conclude 
$$\limsup_{r\rightarrow 1^{-}}\left((1-r)^{\frac{1}{2}-\alpha}M_p(f,r)\right)>0.$$
				
	\item \textit{Case $\alpha=\frac{1}{2}$}: using (\ref{minor_princ}), we know
	$$ [M_p(f,r_l)]^2\gtrsim \sum_{j=1}^{l}\sum_{k=1+N_{j-1}}^{N_j}\frac{\vert a_k\vert^2}{k+1}.$$
Hence applying Lemma \ref{Abel_Transform} with $u_k=\vert a_k\vert^2$ and $v_k=1/(k+1)$, we get, for all $l\geq 1$, 
$$[M_p(f,r_l)]^2\gtrsim	\frac{S_{N_l}}{N_l+1}-\frac{S_{N_0}}{N_0+1}+\sum_{j=1}^l S_{N_{j-1}}\left(\frac{1}{N_{j-1}+1}-\frac{1}{N_j+1}\right).$$
By construction $S_{N_{k-1}}\gtrsim N_{k-1}+1$. Thus taking into account (\ref{equlimsup00}) we derive, for all $l\geq 1$, the inequality 
$$ 	[M_p(f,r_l)]^2\gtrsim \sum_{j=1}^l\left(1-\frac{N_{j-1}+1}{N_j+1}\right)\gtrsim l,$$
that allows to obtain $\displaystyle\limsup_{r\rightarrow 1^{-}}M_p(f,r)=+\infty$. 	
\end{enumerate}			
		\item \label{casnonrefait1} \textbf{Case $1< p < 2$:}\\
		From Hausdorff-Young inequality (see  \cite{PDuren}) we get
		$$[M_p(f,r_l)]^q\geq \sum_{k\geq 0}\frac{\vert a_k\vert^q}{(k+1)^{q\alpha}}r_l^{qk}
		\geq \sum_{k=1}^{N_l}\frac{\vert a_k\vert^q}{(k+1)^{q\alpha}}r_l^{qk}.$$
		Thus we deduce
		$$[M_p(f,r_l)]^q\geq \displaystyle\left(1-\frac{1}{N_l-1}\right)^{qN_l}\sum_{k=1}^{N_l}\frac{\vert a_k\vert^q}{(k+1)^{q\alpha}}
		\gtrsim  \sum_{k=1}^{N_l}\frac{\vert a_k\vert^q}{(k+1)^{q\alpha}}.$$ 
		Using the same strategy as in the case $2\leq p\leq\infty$, we obtain, 
		\begin{align*}&\limsup_{r\rightarrow 1^{-}}\left((1-r)^{\frac{1}{q}-\alpha}M_p(f,r)\right)>0,\quad \hbox{ for } \alpha<1/q,\\
		&\limsup_{r\rightarrow 1^{-}}\left(M_p(f,r)\right)=+\infty,\quad \hbox{ for } \alpha=1/q.
		\end{align*}
	\end{enumerate}

	Moreover, since a $\mathcal{U}$-frequently hypercyclic function is necessarily hypercyclic, the assertion for the case $\alpha>\frac{1}{\max(2,q)}$ of the statement is given by the assertion (\ref{case2bhcalpha}) of Theorem \ref{hcalpha}.
\\
Finally, since a frequently hypercyclic function is necessarily $\mathcal{U}$-frequently hypercyclic, Theorem \ref{anc_main_optimal1} ensures that the estimates we have proved are optimal when $\alpha\ne\frac{1}{\max(2,q)}$. 
\end{proof}

Now we deal with the case $p=1$.

\begin{theorem}\label{ufhcalpha_no_criticp1}
	Let $f$ be a $\mathcal{U}$-frequently hypercyclic function for the operator $T_{\alpha}$. Then, the following estimates hold
	\begin{align*}&\limsup_{r\rightarrow 1^{-}}\left((1-r)^{-\alpha}M_1(f,r)\right)=+\infty ,\quad\hbox{ if }\alpha\leq 0,\\
	&\limsup_{r\rightarrow 1^{-}}M_1(f,r)>0,\quad\hbox{ if }\alpha>0.
	\end{align*}
	For $\alpha\ne 0$, these results are optimal in the following sense: for any positive integer $l\geq 1$, there exists a $\mathcal{U}$-frequently hypercyclic function for the operator $T_{\alpha}$ such that for every $0<r<1$ 	sufficiently large $$M_1(f,r)\lesssim\left\{\begin{array}{ll} (1-r)^{\alpha}\log_l(-\log(1-r))& \hbox{ if } \alpha<0\\1&
	\hbox{ if } \alpha>0.\end{array}\right.$$
\end{theorem}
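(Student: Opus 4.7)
The plan is to deduce Theorem \ref{ufhcalpha_no_criticp1} as a near-immediate consequence of Theorems \ref{hcalpha} and \ref{anc_main_optimal2}, via the two trivial inclusions \emph{$\mathcal{U}$-frequent hypercyclicity implies hypercyclicity} (for the lower bounds) and \emph{frequent hypercyclicity implies $\mathcal{U}$-frequent hypercyclicity} (for the optimal examples). No $p=1$-specific counting argument based on the density of the visiting set appears to be needed.

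For the lower bound when $\alpha\leq 0$, I would argue by contradiction. Assuming $\limsup_{r\to 1^-}(1-r)^{-\alpha}M_1(f,r)=L<\infty$, pick $r_0\in(0,1)$ with $M_1(f,r)\leq(L+1)(1-r)^\alpha$ for $r\in(r_0,1)$. The two elementary facts that $M_1(f,\cdot)$ is non-decreasing on $(0,1)$ (subharmonicity of $|f|$) and that $(1-r)^\alpha\geq 1$ on $[0,1)$ when $\alpha\leq 0$ allow me to upgrade this to the global bound $M_1(f,r)\leq C(1-r)^\alpha$ on $(0,1)$, with $C=\max(L+1,M_1(f,r_0))$. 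Since $f$ is in particular hypercyclic, this contradicts assertion (\ref{assb}) of Theorem \ref{hcalpha} applied at $p=1$. For $\alpha>0$ the claim $\limsup_{r\to 1^-}M_1(f,r)>0$ is immediate: a hypercyclic $f$ is nonzero, hence $M_1(f,r_0)>0$ for any fixed $r_0\in(0,1)$, and monotonicity of $M_1(f,\cdot)$ gives $\liminf_{r\to 1^-}M_1(f,r)\geq M_1(f,r_0)>0$. The optimality part is immediate from Theorem \ref{anc_main_optimal2}: for each integer $\ell\geq 1$ the latter produces frequently hypercyclic functions for $T_\alpha$ with $M_1(f,r)\lesssim(1-r)^\alpha\log_\ell(-\log(1-r))$ when $\alpha<0$ and $M_1(f,r)\lesssim 1$ when $\alpha>0$, and these are automatically $\mathcal{U}$-frequently hypercyclic.

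The one structural point worth flagging is the contrast with the case $p>1$ treated in Theorem \ref{ufhcalpha_no_critic}: one cannot reach the conclusion $\limsup=+\infty$ by extracting quantitative information from the positive-upper-density set $I_{N_l}$ on which $|a_{n_k}|\geq 1$. Indeed a direct approach based on the Hardy inequality $\sum_n|c_n|/(n+1)\lesssim\|f\|_{H^1}$ applied to $f_r(z)=f(rz)$ would yield at best $M_1(f,r_l)\gtrsim(1-r_l)^\alpha$ along the subsequence $r_l\sim 1-1/N_l$, hence $\limsup\geq C>0$, not $+\infty$. The sharp conclusion $\limsup=+\infty$ really exploits the density of the entire coefficient sequence $\{a_n\}$ in $\C$---i.e., full hypercyclicity---through the sharper Cauchy-type estimate at $r=e^{-1/(n+1)}$ that underlies Theorem \ref{hcalpha}(\ref{assb}). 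This is why the cleanest proof organization is a reduction to the hypercyclic case rather than a UFHC-specific argument, and there is no substantial technical obstacle beyond the elementary extension of a near-boundary estimate to a global one.
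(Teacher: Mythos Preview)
Your proposal is correct and follows essentially the same approach as the paper: reduce the lower bounds to the hypercyclic case via Theorem \ref{hcalpha} and derive optimality from the frequently hypercyclic examples of Theorem \ref{anc_main_optimal2}. You correctly invoke assertion (\ref{assb}) for the case $\alpha\le 0$ (the paper's citation of (\ref{assa}) there appears to be a misprint) and spell out the elementary extension of the near-boundary bound to a global one, which the paper leaves implicit.
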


\begin{proof} First since a $\mathcal{U}$-frequently hypercyclic function is necessarily hypercyclic, the assertions (\ref{assa}) and (\ref{case2bhcalpha}) of Theorem \ref{hcalpha} ensures that, 
$$\limsup_{r\rightarrow 1^{-}}\left((1-r)^{-\alpha}M_1(f,r)\right)=+\infty,\hbox{ if }\alpha\leq 0,\quad\hbox{ and }\quad\limsup_{r\rightarrow 1^{-}}M_1(f,r)>0,\hbox{ if }\alpha>0.$$
Moreover, since a frequently hypercyclic function is necessarily $\mathcal{U}$-frequently hypercyclic, Theorem \ref{anc_main_optimal2} shows that the previous estimates are optimal when $\alpha\ne 0$.
\end{proof}

\section{Between $\mathcal{U}$-frequent hypercyclicity and hypercyclicity} \label{section_ubgamma} 
Let $1\leq p\leq\infty$. In view of Theorems \ref{anc_main_optimal1}, \ref{anc_main_optimal2}, \ref{ufhcalpha_no_critic} and \ref{ufhcalpha_no_criticp1}, the critical exponent related to the $L^p$ growth of frequently hypercyclic functions for $T_{\alpha}$ is the same as that related to the $L^p$ growth of $\mathcal{U}$-frequently hypercyclic functions. It is equal to $\frac{1}{\max(2,q)}$. Nevertheless this critical exponent is always equal to $0$ in the case of hypercyclic functions and, hence it does not depend on $p$. In this section we are interested in what happens between $\mathcal{U}$-frequent hypercyclicity and hypercyclicity. In particular, when $p>1$, we will try to understand why and how the critical exponent goes from $\frac{1}{\max(2,q)}$ in the case of $L^p$-norm of $\mathcal{U}$-frequent hypercyclic functions to $0$ for the $L^p$-norm of hypercyclic functions. To do this, we introduce intermediate notions of linear dynamics that link $\mathcal{U}$-frequent hypercyclicity and hypercyclicity. First of all, we need some definitions and results.

\subsection{Some weighted densities}\label{subsectiondens} 

First we introduce a refined notion of upper densities. 

\begin{definition} Let $\beta=(\beta_n)$ be a non-decreasing sequence of positive real numbers tending to infinity. For a subset $E\subset \mathbb{N}$, its upper $\beta$-density is given by
	$$\overline{d}_{\beta}(E)=\limsup_{n\rightarrow +\infty}\frac{\sum_{k=1; k\in E}^n \beta_k}{\sum_{k=1}^n \beta_k}.$$ 
\end{definition}

These quantities enjoy all the classical properties of densities (see \cite{ErMo1,FrSa}) and allow to define dynamical notions of the same nature as hypercyclicity or $\mathcal {U}$-frequent hypercyclicity.

\begin{definition} Let $\beta=(\beta_n)$ be a non-decreasing sequence of positive real numbers tending to infinity and let $E$ be a subset of $\mathbb{N}$. An operator $T:X\rightarrow X$, where $X$ is a Fr\'echet space, is said to be $\mathcal{U}_{\beta}$-frequently hypercyclic if there exists $x\in X$ such that for every non-empty open subset $U\subset X$, 
	$$\overline{d}_{\beta}(\{n\in\mathbb{N}\ :\ T^nx\in U\})>0.$$
\end{definition}  

In the sequel, we are interested in densities given by the weighted sequence denoted by $\beta^{\gamma}$ and defined by $\beta^{\gamma}=(e^{n^{\gamma}})$, where $\gamma$ is a parameter with $0\leq \gamma\leq 1.$ First of all, let us notice that:
\begin{enumerate}[(i)] 
	\item the density $\overline{d}_{\beta^{0}}$ coincides with the upper natural density $\overline{d}$,
\item for any subset $E\subset\mathbb{N}$, $\overline{d}_{\beta^{1}}(E)>0$ if and only if $E$ is infinite.
\end{enumerate}

Moreover for $0<\gamma<1$ an integral comparison test leads to the estimate  
$$\displaystyle \sum_{k=1}^n e^{k^{\gamma}}\sim \frac{n^{1-\gamma}}{\gamma}e^{n^{\gamma}},\quad\hbox{ as }n\hbox{ tends to infinity,}$$
that we will use regularly in the rest of the paper. In addition, according to Lemma 2.8 of \cite{ErMo1} the following inequalities hold. 

\begin{lemma} For any $0\leq \gamma_1\leq \gamma_2\leq 1$ and for any subset $E$ of $\mathbb{N}$, we have
	$$\overline{d}(E)\leq \overline{d}_{\beta^{\gamma_1}}(E)\leq \overline{d}_{\beta^{\gamma_2}}(E)\leq \overline{d}_{\beta^{1}}(E).$$
\end{lemma}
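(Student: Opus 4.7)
The plan is to reduce the chain of inequalities to the middle one. Since $\beta^0=(e)_{n\geq 1}$ is a constant sequence, $\overline{d}_{\beta^0}(E)$ coincides with the natural upper density $\overline{d}(E)$, so both the first inequality (obtained by taking $\gamma_1=0$) and the third (obtained by taking $\gamma_2=1$) are special cases of the monotonicity statement $\overline{d}_{\beta^{\gamma_1}}(E)\leq \overline{d}_{\beta^{\gamma_2}}(E)$ whenever $0\leq \gamma_1\leq \gamma_2\leq 1$.

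The pivotal observation for this monotonicity is that the ratio $\rho_k:=\beta^{\gamma_2}_k/\beta^{\gamma_1}_k=e^{k^{\gamma_2}-k^{\gamma_1}}$ is non-decreasing in $k\geq 1$, with $\rho_1=1$. Indeed, the derivative of $t\mapsto t^{\gamma_2}-t^{\gamma_1}$ equals $t^{\gamma_1-1}(\gamma_2 t^{\gamma_2-\gamma_1}-\gamma_1)$, which is non-negative for $t\geq 1$ since $\gamma_2 t^{\gamma_2-\gamma_1}\geq \gamma_2\geq \gamma_1$.

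My strategy is then an Abel-summation argument at carefully chosen indices. Fix $c<\overline{d}_{\beta^{\gamma_1}}(E)$, set $t_n=\sum_{k\leq n,\, k\in E}\beta^{\gamma_1}_k$ and $s_n=\sum_{k\leq n}\beta^{\gamma_1}_k$, and define $T_n:=t_n-cs_n$. Since $c<\limsup_n t_n/s_n$ and $s_n\to\infty$, we have $\limsup_n T_n=+\infty$, so the running maximum of $(T_n)$ is unbounded and there exist infinitely many \emph{record} indices $n^*$ at which $T_j\leq T_{n^*}$ for every $j\leq n^*$ (with $T_{n^*}>0$). Abel summation then yields
$$\sum_{k\leq n^*,\,k\in E}\beta^{\gamma_2}_k - c\sum_{k\leq n^*}\beta^{\gamma_2}_k = \sum_{k\leq n^*}\rho_k\beta^{\gamma_1}_k(\mathbf{1}_E(k)-c) = \rho_{n^*}T_{n^*}-\sum_{j=1}^{n^*-1}(\rho_{j+1}-\rho_j)T_j.$$
Combined with $T_j\leq T_{n^*}$ and $\rho_{j+1}-\rho_j\geq 0$, the second sum is at most $T_{n^*}(\rho_{n^*}-\rho_1)=T_{n^*}(\rho_{n^*}-1)$, so the whole quantity is at least $T_{n^*}>0$. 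Consequently $\overline{d}_{\beta^{\gamma_2}}(E)\geq c$; letting $c\nearrow\overline{d}_{\beta^{\gamma_1}}(E)$ delivers the desired monotonicity.

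The main obstacle I anticipate is precisely this choice of subsequence. A naive attempt based on a subsequence realizing $\overline{d}_{\beta^{\gamma_1}}(E)$ fails because the non-decreasing weight $\rho_k$ concentrates mass on the tail of $[1,n^*]$, where the local density of $E$ may differ sharply from its global density. The record-index selection is exactly what makes the Abel telescoping collapse in the favorable direction, since it provides the uniform bound $T_j\leq T_{n^*}$ for \emph{every} $j\leq n^*$, turning the otherwise unwieldy negative term into a clean cancellation.
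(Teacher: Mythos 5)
Your proof is correct, and the approach is the standard one for this kind of statement. The paper itself does not give an argument here --- it simply cites Lemma 2.8 of \cite{ErMo1} --- so you are supplying a self-contained proof of something the paper takes as known. What you use is exactly the classical mechanism for comparing weighted upper densities when the ratio of the two weight sequences is non-decreasing: observe that $\rho_k=\beta^{\gamma_2}_k/\beta^{\gamma_1}_k=e^{k^{\gamma_2}-k^{\gamma_1}}$ increases with $k\geq 1$ and has $\rho_1=1$, then perform Abel summation against the partial sums $T_n=t_n-cs_n$ at indices where $T_n$ attains a new running maximum. Your bookkeeping is careful where it needs to be: you note that $T_0=0$, that the running maximum of $T_n$ is unbounded (since $c<\limsup t_n/s_n$ forces $T_n>\varepsilon s_n\to\infty$ along a subsequence), hence there are infinitely many record indices with $T_{n^*}>0$, and that the uniform bound $T_j\leq T_{n^*}$ for $j\leq n^*$ together with $\rho_{j+1}\geq\rho_j$ collapses the telescoping error term to $T_{n^*}(\rho_{n^*}-1)$. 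Your closing remark is also the right diagnosis: picking a subsequence that merely realizes $\overline{d}_{\beta^{\gamma_1}}(E)$ does not give the one-sided control on \emph{all} earlier partial sums that the Abel estimate requires, and the record-time selection is precisely what supplies it.
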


Therefore the densities $\overline{d}_{\beta^{\gamma}}$ can give very different notions of dynamics that are intermediate between $\mathcal{U}$-frequent hypercyclicity and hypercyclicity. In particular the following lemma holds. 

\begin{lemma}\label{diffdensbeta} Let $0<\gamma\leq 1.$ There exists a subset $E_{\gamma}\subset\mathbb{N}$ such that, for any $0\leq \gamma'<\gamma\leq 1$, $\overline{d}_{\beta^{\gamma}}(E)>0$ and $\overline{d}_{\beta^{\gamma'}}(E)=0$.
\end{lemma}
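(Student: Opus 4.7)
The plan is to take $E_\gamma$ to be a union of blocks, $E_\gamma = \bigcup_{j \geq 1} I_j$, where $I_j = \{N_j - L_j + 1,\dots, N_j\}$ with $L_j = \lceil N_j^{1-\gamma}\rceil$ (and $L_j = 1$ if $\gamma = 1$). The centers $N_j$ are chosen so that $N_{j+1} \geq N_j^2$, which ensures that any sum of the form $\sum_{j\leq J} N_j^{1-\gamma} e^{N_j^{\gamma'}}$ is controlled, up to a multiplicative constant, by its last term, for every $\gamma' \in [0,1]$. Two elementary estimates drive the argument: since $L_j \leq N_j^{1-\gamma}$, one has $N_j^\gamma - (N_j-L_j)^\gamma = O(1)$, so $e^{k^\gamma}$ is comparable to $e^{N_j^\gamma}$ on $I_j$; and the same length bound yields $N_j^{\gamma'} - (N_j-L_j)^{\gamma'} = O(N_j^{\gamma'-\gamma}) = o(1)$ for any $\gamma' < \gamma$, so $e^{k^{\gamma'}}$ is also comparable to $e^{N_j^{\gamma'}}$ on $I_j$.

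For the lower bound on $\overline{d}_{\beta^\gamma}(E_\gamma)$, I would evaluate the density along the subsequence $n = N_j$: the numerator is bounded below by $\sum_{k \in I_j} e^{k^\gamma} \gtrsim L_j\, e^{N_j^\gamma} \gtrsim N_j^{1-\gamma}\, e^{N_j^\gamma}$, which matches the denominator $\sum_{k=1}^{N_j} e^{k^\gamma} \sim \gamma^{-1} N_j^{1-\gamma} e^{N_j^\gamma}$ (the asymptotic stated after the definition of $\beta^\gamma$) up to a multiplicative constant. Hence $\overline{d}_{\beta^\gamma}(E_\gamma) > 0$. In the extreme case $\gamma = 1$ one uses instead that $\sum_{k \leq N_j} e^k \sim e^{N_j}\cdot e/(e-1)$, so that even a singleton block contributes a definite fraction of the denominator.

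For the upper bound $\overline{d}_{\beta^{\gamma'}}(E_\gamma) = 0$ with $\gamma' < \gamma$, I would split according to where $n$ lies. If $N_J \leq n < N_{J+1}-L_{J+1}$, the numerator equals $\sum_{j\leq J}\sum_{k\in I_j} e^{k^{\gamma'}} \lesssim \sum_{j\leq J} N_j^{1-\gamma}\, e^{N_j^{\gamma'}} \lesssim N_J^{1-\gamma}\, e^{N_J^{\gamma'}}$ by the rapid growth of $(N_j)$, while the denominator is $\gtrsim n^{1-\gamma'} e^{n^{\gamma'}} \geq N_J^{1-\gamma'} e^{N_J^{\gamma'}}$; the ratio is thus $\lesssim N_J^{\gamma'-\gamma} \to 0$. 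When $n$ lies inside the block $I_{J+1}$ the denominator already contains a factor $e^{n^{\gamma'}}$, the partial contribution of that block is at most $L_{J+1}\, e^{N_{J+1}^{\gamma'}}$, comparable to $L_{J+1}\, e^{n^{\gamma'}}$ since $n$ is within $L_{J+1}$ of $N_{J+1}$, and the same comparison $L_{J+1}/N_{J+1}^{1-\gamma'} = N_{J+1}^{\gamma'-\gamma}$ drives the ratio to zero. The boundary case $\gamma' = 0$ reduces to the natural density and follows from $\sum_{j\leq J} L_j \lesssim N_J^{1-\gamma} = o(N_J)$.

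The main technical subtlety is the calibration of the block lengths: $L_j$ must be \emph{large enough}, of order $N_j^{1-\gamma}$, so that each block carries enough $\beta^\gamma$-mass to match the corresponding prefix of the denominator; and it must be \emph{small enough} relative to $N_j^{1-\gamma'}$ for every $\gamma' < \gamma$, so that the block's $\beta^{\gamma'}$-mass becomes negligible compared with $N_j^{1-\gamma'} e^{N_j^{\gamma'}}$. The exponent $1-\gamma$ is exactly the unique choice realising this double requirement, and the rapid growth of $(N_j)$ is imposed only to ensure that the sum over earlier blocks contributes at most a bounded multiple of the last term.
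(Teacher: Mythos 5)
Your construction is essentially the paper's: both take blocks ending at scales $N_j$ of length about $N_j^{1-\gamma}$, the paper choosing $N_j = 2^j$ while you impose the somewhat stronger (but harmless) growth $N_{j+1}\geq N_j^2$, and both prove the lower bound by comparing the last block's $\beta^\gamma$-mass with $\sum_{k\leq N_j}e^{k^\gamma}\sim\gamma^{-1}N_j^{1-\gamma}e^{N_j^\gamma}$ and the upper bound by noting the ratio decays like $N_j^{\gamma'-\gamma}$. The argument is correct (minor slip: you write $L_j\leq N_j^{1-\gamma}$ where $L_j\lesssim N_j^{1-\gamma}$ is meant, and the claim that $\sum_{j\leq J}N_j^{1-\gamma}e^{N_j^{\gamma'}}$ is controlled by its last term fails at the corner $\gamma=1,\gamma'=0$, but you treat $\gamma=1$ separately so no real gap results).
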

\begin{proof} 
	First observe that, for all $0< t< 1$, 
	\begin{equation}\label{equ_lemme_dgamma}
\frac{\sum_{k=1}^{2^{n-1}}e^{k^{t}}}{\sum_{k=1}^{2^{n}}e^{k^{t}}}\sim 2^{-(1-t)}e^{-2^{nt}(1-2^{-t})}\rightarrow 0\hbox{ as }n\rightarrow +\infty.
	\end{equation}
	Let $\gamma\ne 1$. Set $E_{\gamma}=\mathbb{N}\bigcap\left(\bigcup_{n\geq \lfloor \frac{1}{\gamma}\rfloor +1}\left[2^{n}-\lfloor2^{n(1-\gamma)}\rfloor;2^n\right]\right)$. Clearly, for all $n$ large enough, we have 
	\begin{equation}\label{equ_lemme_dgamma2}
	\sum_{k=1;k\in E_{\gamma}}^{2^n}e^{k^{\gamma}}\geq  \sum_{k=2^n-\lfloor2^{n(1-\gamma)}\rfloor +1}^{2^{n}}e^{k^{\gamma}}.
	\end{equation}
	Moreover we get
	$$\frac{\sum_{k=2^n-\lfloor2^{n(1-\gamma)}\rfloor +1}^{2^{n}}e^{k^{\gamma}}}{\sum_{k=1}^{2^{n}}e^{k^{\gamma}}}=1-\frac{\sum_{k=1}^{2^n-\lfloor2^{n(1-\gamma)}\rfloor}e^{k^{\gamma}}}{\sum_{k=1}^{2^{n}}e^{k^{\gamma}}}.$$
	But we compute
	$$\frac{\sum_{k=1}^{2^n-\lfloor2^{n(1-\gamma)}\rfloor}e^{k^{\gamma}}}{\sum_{k=1}^{2^{n}}e^{k^{\gamma}}}\sim \left(1-\frac{\lfloor 2^{n(1-\gamma)}\rfloor}{2^n}\right)^{1-\gamma}e^{2^{n\gamma}((1-2^{-n}\lfloor 2^{n(1-\gamma)}\rfloor)^{\gamma}-1)}\rightarrow e^{-\gamma}\hbox{ as }n\rightarrow +\infty.$$
	Taking into account (\ref{equ_lemme_dgamma}) and (\ref{equ_lemme_dgamma2}), we deduce 
	$$\overline{d}_{\beta^{\gamma}}(E_{\gamma})>0.$$ 
	Now let $0\leq \gamma'<\gamma\leq 1$. Clearly keeping in mind that
		$$\sum_{k=1;k\in E_{\gamma}}^{2^n}e^{k^{\gamma'}}\leq \sum_{k=1}^{2^{n-1}}e^{k^{\gamma'}} + \sum_{k=2^n-\lfloor2^{n(1-\gamma)}\rfloor}^{2^{n}}e^{k^{\gamma'}}$$ 
	and by using both $\gamma'-\gamma<0$, the estimate
	$$\frac{\sum_{k=1}^{2^n-\lfloor2^{n(1-\gamma)}\rfloor}e^{k^{\gamma'}}}{\sum_{k=1}^{2^{n}}e^{k^{\gamma'}}}\sim \left(1-\frac{\lfloor 2^{n(1-\gamma)}\rfloor}{2^n}\right)^{1-\gamma'}e^{2^{n\gamma'}((1-2^{-n}\lfloor 2^{n(1-\gamma)}\rfloor)^{\gamma'}-1)}\rightarrow 1\hbox{ as }n\rightarrow +\infty,$$
	and (\ref{equ_lemme_dgamma}) we derive
	$$\overline{d}_{\beta^{\gamma'}}(E_{\gamma})=0.$$
	Finally, for $\gamma=1$, the lemma is easy to establish since a subset $E\subset\mathbb{N}$ satisfies $d_{\beta^1}(E)>0$ if and only if $E$ is infinite. This finishes the proof.
	\end{proof}

In some sense, the densities $\underline{d}_{\beta^\gamma}$, $0\leq\gamma\leq 1$, will us allow to interpolate the behavior of hypercyclic vectors between the $\mathcal{U}$-frequent hypercyclicity and the hypercyclicity.

\subsection{Rate of growth of $\mathcal{U}_{\beta^{\gamma}}$-frequently hypercyclic functions }\label{subsectiondirect}
First we deal with the case $p>1$. We will discuss the case $p=1$ at the end of the section. We are ready to state the result that highlights both the continuous variation of the critical exponent and that of the growth (in term of $L^p$ averages) of a hypercyclic function for $T_{\alpha}$ according to the frequency of visits of non-empty open subsets by its orbit under the action of $T_{\alpha}$.

\begin{theorem}\label{ubetafhcalpha_no_critic}
	Let $0<\gamma<1$ and $1< p\leq\infty$. Let $f$ be a $\mathcal{U}_{\beta^{\gamma}}$-frequently hypercyclic function for the operator $T_{\alpha}$. Then, the following hold  
		\begin{align*}
			&\hbox{ if } \alpha<\frac{1-\gamma}{\max(2,q)},\quad \limsup_{r\rightarrow 1^{-}}\left(\left[1-r\right]^{\frac{1-\gamma}{\max(2,q)}-\alpha}M_p(f,r)\right)>0,\\
			&\hbox{ if } \alpha=\frac{1-\gamma}{\max(2,q)} ,\quad \limsup_{r\rightarrow 1^{-}}\left(M_p(f,r)\right)=+\infty,\\
			&\hbox{ if } \alpha>\frac{1-\gamma}{\max(2,q)} ,\quad \limsup_{r\rightarrow 1^{-}}\left(M_p(f,r)\right)>0.
		\end{align*}
\end{theorem}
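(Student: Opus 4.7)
The argument follows the template of Theorem~\ref{ufhcalpha_no_critic}, with the natural density replaced by $\overline{d}_{\beta^{\gamma}}$ and with one new combinatorial ingredient. Write $f=\sum_{k\geq 0}\frac{a_k}{(k+1)^{\alpha}}z^k$. By $\mathcal{U}_{\beta^{\gamma}}$-frequent hypercyclicity I extract an index set $I\subset\mathbb{N}$ with $\overline{d}_{\beta^{\gamma}}(I)>0$ and $|a_n|\geq 1$ for $n\in I$, together with a witnessing sequence $(N_l)$ so that $\sum_{k\in I\cap[1,N_l]}e^{k^{\gamma}}\gtrsim N_l^{1-\gamma}e^{N_l^{\gamma}}$. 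The key new observation, and the main obstacle of the whole proof, is a \emph{dispersion lemma}: for any fixed $\delta\in(0,1)$,
$$|I\cap(\delta N_l,N_l]|\gtrsim N_l^{1-\gamma}\quad\text{for all $l$ sufficiently large.}$$
This follows because $\sum_{k=1}^{\lfloor\delta N_l\rfloor}e^{k^{\gamma}}\sim \frac{(\delta N_l)^{1-\gamma}}{\gamma}e^{\delta^{\gamma}N_l^{\gamma}}$ is of strictly smaller exponential order than $N_l^{1-\gamma}e^{N_l^{\gamma}}$, so the $\beta^{\gamma}$-mass of $I\cap[1,N_l]$ must concentrate in the upper segment $(\delta N_l,N_l]$, where each weight is bounded by $e^{N_l^{\gamma}}$.

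With this in hand, set $r_l=1-1/N_l$, so that $r_l^{2N_l}\gtrsim 1$. For $2\leq p\leq\infty$, Parseval's formula gives
$$[M_p(f,r_l)]^2\geq [M_2(f,r_l)]^2\gtrsim\sum_{k\in I\cap(\delta N_l,N_l]}(k+1)^{-2\alpha}.$$
In the subcritical range $\alpha<(1-\gamma)/2$ each term dominates $N_l^{-2\alpha}$, so the dispersion lemma yields $[M_p(f,r_l)]^2\gtrsim N_l^{1-\gamma-2\alpha}\asymp (1-r_l)^{2\alpha-(1-\gamma)}$, producing the first assertion after a square root. For $1<p<2$ I would run exactly the same program with Hausdorff--Young in place of Parseval and the exponent $q$ in place of $2$, yielding the threshold $\alpha<(1-\gamma)/q$. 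The supercritical statement $\alpha>(1-\gamma)/\max(2,q)$ is then immediate: since positive $\overline{d}_{\beta^{\gamma}}$-density forces $I$ to be infinite, every $\mathcal{U}_{\beta^{\gamma}}$-frequently hypercyclic vector is hypercyclic, and assertion (\ref{case2bhcalpha}) of Theorem~\ref{hcalpha} applies directly.

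The critical case $\alpha=(1-\gamma)/\max(2,q)$ is the most delicate, since then the direct bound only gives a constant (indeed $N_l^{1-\gamma}\cdot N_l^{-(1-\gamma)}=1$). Here I would revive the Abel-summation strategy from the proof of the $\alpha=1/2$ case of Theorem~\ref{ufhcalpha_no_critic}. First sparsify $(N_l)$ so that $(N_{l-1}+1)/(N_l+1)\leq\lambda$ for some fixed $\lambda<1$. For $p\geq 2$ apply Lemma~\ref{Abel_Transform} with $u_k=|a_k|^2$ and $v_k=(k+1)^{-(1-\gamma)}$, observing that the dispersion lemma gives the essential input $S_{N_j}=\sum_{k=1}^{N_j}u_k\gtrsim N_j^{1-\gamma}$ along the witnessing sequence. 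This produces
$$[M_p(f,r_l)]^2\gtrsim\sum_{j=1}^l S_{N_{j-1}}\bigl(v_{N_{j-1}}-v_{N_j}\bigr)\gtrsim\sum_{j=1}^l\left(1-\left(\tfrac{N_{j-1}+1}{N_j+1}\right)^{1-\gamma}\right)\gtrsim l,$$
so $\limsup_{r\to 1^-}M_p(f,r)=+\infty$. For $1<p<2$ one substitutes $v_k=(k+1)^{-q\alpha}=(k+1)^{-(1-\gamma)}$ (the exponent is the same at the critical point) and uses Hausdorff--Young as before. The delicate point throughout is the dispersion lemma and the careful choice of the telescoping scales $(N_l)$ so that each Abel increment is bounded below by a positive constant.
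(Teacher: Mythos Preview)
Your proof is correct, but it follows a route genuinely different from the paper's. The paper never isolates your dispersion lemma; instead it incorporates the $\beta^{\gamma}$-weights directly into the Abel summation by taking $u_k=|a_k|^2 e^{k^{\gamma}}$ and $v_k=(k+1)^{-2\alpha}e^{-k^{\gamma}}$. The density hypothesis then gives $S_{N_i}\gtrsim N_i^{1-\gamma}e^{N_i^{\gamma}}$ at once, and the single weighted Abel identity handles both the subcritical case (via the boundary term $S_{N_l}v_{N_l}\gtrsim N_l^{(1-\gamma)-2\alpha}$) and the critical case (via the telescoping sum). Your approach instead converts the weighted density into an honest cardinality bound $|I\cap(\delta N_l,N_l]|\gtrsim N_l^{1-\gamma}$, which lets you dispense with Abel summation entirely in the subcritical regime and use the simpler unweighted choice $u_k=|a_k|^2$, $v_k=(k+1)^{-(1-\gamma)}$ at the critical exponent. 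The paper's argument is more unified (one Abel setup for everything, at the cost of tracking the eventual monotonicity of $t\mapsto t^{-2\alpha}e^{-t^{\gamma}}$), while your dispersion lemma makes the subcritical estimate completely transparent and isolates a reusable combinatorial fact about $\beta^{\gamma}$-density that may be of independent interest.
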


\begin{proof}
	Let $f$ be a $\mathcal{U}_{\beta^{\gamma}}$-frequently hypercyclic function for $T_{\alpha}$. We write $f=\sum\limits_{k\geq 0}\frac{a_k}{(k+1)^{\alpha}}z^k.$ Since $f$ is $\mathcal{U}_{\beta^{\gamma}}$-frequently hypercyclic there exists an increasing sub-sequence $(n_k)\subset\mathbb{N}$ with positive upper $\beta^{\gamma}$-density such that, for all $k\geq 1$, 
	$$\vert T_{\alpha}^{n_k}f(0)-3/2\vert=\vert a_{n_k}-3/2\vert <1/2.$$
	We get, for all $k\geq 1$, $\vert a_{n_k}\vert \geq 1$. Set $I=((n_k))$ and for all $N\geq 1$, $I_N=I\cap\{1,\dots,N\}$. The hypothesis $\overline{d}_{\beta^{\gamma}}(I)>0$ ensures that there exist $0<C<1$ and an increasing sequence $(N_l)$ of positive integers such that
	\begin{equation}\label{dominationlimsup}
		\sum_{k\in I_{N_l}}e^{k^{\gamma}} \geq  C  \frac{N_l^{1-\gamma}}{\gamma}e^{N_l^{\gamma}}.
	\end{equation}
Up to take a sub-sequence, we can suppose that
		\begin{equation}\label{equlimsup100}
	C(N_{k+1}+1)\geq N_k+1.
	\end{equation}
	Let us consider, for all $l\geq 1$, a sequence $(r_l)$ with $1-\frac{1}{N_l-1}\leq r_l<1-\frac{1}{N_l}$. Observe that
	\begin{equation}\label{encNl2}
		N_l-1\leq \frac{1}{1-r_l}<N_l.
	\end{equation}
	\begin{enumerate}
		\item \textbf{Case $2\leq p\leq\infty$:}\\
		Jensen'inequality and Parseval's Theorem give
		\begin{equation}\label{eq_princ_again00}
		[M_p(f,r_l)]^2\geq [M_2(f,r_l)]^2=\sum_{k\geq 0}\frac{\vert a_k\vert^2}{(k+1)^{2\alpha}}r_l^{2k}
		\geq \sum_{k=1}^{N_l}\frac{\vert a_k\vert^2}{(k+1)^{2\alpha}}r_l^{2k}\gtrsim  \sum_{k=1}^{N_l}\frac{\vert a_k\vert^2}{(k+1)^{2\alpha}}.
		\end{equation}
		Let us choose $j_0\in\mathbb{N}$ such that the function $t\mapsto t^{-2\alpha}e^{-t^{\gamma}}$ is decreasing for $t\geq N_{j_0}$. 
Thus we can write, for all $l\geq j_0+1$, 
$$[M_p(f,r_l)]^2\gtrsim \sum_{j=1+j_0}^l\sum_{k=1+N_{j-1}}^{N_j}\frac{\vert a_k\vert^2}{(k+1)^{2\alpha}}e^{k^{\gamma}}e^{-k^{\gamma}}.$$
Then applying Lemma \ref{Abel_Transform} with $u_k=\vert a_k\vert^2 e^{k^{\gamma}}$, we get 

\begin{equation}\label{equ-principal_Abel}
\begin{array}{ll}\displaystyle[M_p(f,r_l)]^2\gtrsim	&\displaystyle S_{N_l}(N_l+1)^{-2\alpha}e^{-N_{l}^{\gamma}}-\displaystyle S_{N_{j_0}}(N_{j_0}+1)^{-2\alpha}e^{-N_{j_0}^{\gamma}}\\&+\displaystyle\sum_{j=1+j_0}^l S_{N_{j-1}}\left((N_{j-1}+1)^{-2\alpha}e^{-N_{j-1}^{\gamma}}-(N_{j}+1)^{-2\alpha}e^{-N_{j}^{\gamma}}\right).\end{array}
\end{equation}

Since $S_{N_i}=\sum\limits_{k\leq N_i}\vert a_k\vert^2 e^{k^{\gamma}}$, by construction and by (\ref{dominationlimsup}), we get, for all $i\geq 1$,

\begin{equation}\label{equ-principal_Abel2}
S_{N_i}\geq \sum_{k\in I_{N_i}}e^{k^{\gamma}}\gtrsim N_{i}^{1-\gamma}e^{N_{i}^{\gamma}}\gtrsim (N_{i}^{1-\gamma}+1)e^{N_{i}^{\gamma}}.
\end{equation}

From (\ref{equ-principal_Abel}) and (\ref{equ-principal_Abel2}) we deduce

\begin{equation}\label{equ-principal_Abel3}
\begin{array}{ll}\displaystyle[M_p(f,r_l)]^2\gtrsim	&\displaystyle (N_l+1)^{(1-\gamma)-2\alpha}\\&+\displaystyle\sum_{j=1+j_0}^l (N_{j-1}+1)^{1-\gamma}e^{N_{j-1}^{\gamma}}\left((N_{j-1}+1)^{-2\alpha}e^{-N_{j-1}^{\gamma}}-(N_{j}+1)^{-2\alpha}e^{-N_{j}^{\gamma}}\right).\end{array}
\end{equation}

		\begin{enumerate}
			\item \textit{Case $\alpha < \frac{1-\gamma}{2}$}: 	
			From (\ref{equ-principal_Abel3}) we get, for $l$ large enough
			\begin{equation}\label{comb2}[M_p(f,r_l)]^2\gtrsim	\displaystyle (N_l+1)^{(1-\gamma)-2\alpha}
			\end{equation}
			Thanks to (\ref{encNl2}) and (\ref{comb2}) we deduce
			$$[M_p(f,r_l)]^2\gtrsim (1-r_l)^{2\alpha-(1-\gamma)}.$$
			Hence we conclude 
			$$\limsup_{r\rightarrow 1^{-}}\left[(1-r)^{-\alpha+\frac{1-\gamma}{2}}M_p(f,r)\right]>0.$$
			
			\item \textit{Case $\alpha = \frac{1-\gamma}{2}$}: taking into consideration (\ref{equ-principal_Abel3}), we can write, for all $l\geq 1+j_0$,
			$$\displaystyle[M_p(f,r_l)]^2\gtrsim\displaystyle \sum_{j=1}^l \left(1-\left(\frac{N_{j-1}+1}{N_{j}+1}\right)^{1-\gamma}e^{N_{j-1}^{\gamma}-N_{j}^{\gamma}}\right).$$
			Thus taking into account (\ref{equlimsup100}) we derive, for all $l\geq 1+j_0$, $[M_p(f,r_l)]^2\gtrsim l$, which allows to obtain $$\displaystyle\limsup_{r\rightarrow 1^{-}}M_p(f,r)=+\infty.$$
			
			\item \textit{Case $\alpha > \frac{1-\gamma}{2}$}: since $f$ is hypercyclic, the conclusion is given by Theorem \ref{hcalpha}.
		\end{enumerate}

		\item \textbf{Case $1< p < 2$}:\\
		It suffices to combine the arguments of the proof of the preceding case with those of the proof of (\ref{casnonrefait1}) of Theorem \ref{ufhcalpha_no_critic} to obtain the desired conclusions.
		\end{enumerate}	
\end{proof}

\subsection{Optimal growth of $\mathcal{U}_{\beta^{\gamma}}$-frequently hypercyclic functions: a constructive proof}\label{subsectionconstructive}
In this subsection, we intend to prove that the estimates given by Theorem \ref{ubetafhcalpha_no_critic} whenever $\alpha$ is different from the critical exponent, i.e. $\alpha\ne \frac{1-\gamma}{\max(2,q)}$, are optimal. The case $\alpha = \frac{1-\gamma}{\max(2,q)}$ will be treated separately in Section \ref{sectioncritic}. Thus for all $0<\gamma<1$ and for $\alpha\ne \frac{1-\gamma}{\max(2,q)}$, we propose to build $\mathcal{U}_{\beta^{\gamma}}$-frequently hypercyclic functions for $T_{\alpha}$ that have the required $L^p$ growth and no more. To do this, we follow the construction of frequently hypercyclic functions for $T_{\alpha}$ given in \cite{MouMun3} which itself was partly inspired by \cite{Drasin}. In particular we will need the so-called Rudin-Shapiro polynomials (combined with the de la Vall\'{e}e-Poussin polynomials), which have coefficients 
$\pm 1$ (or bounded by $1$) and an optimal growth of $L^p$-norm. Let us recall the associated result in the form of Lemma 2.1 of \cite{Drasin} that summarized the result of Rudin-Shapiro \cite{RudSha}.  

\begin{lemma}\label{lemma_rud_shap} 
	\begin{enumerate}
		\item For each $N\geq 1$, there is a trigonometric polynomial $p_N=\sum_{k=0}^{N-1}\varepsilon_{N,k}e^{ik\theta}$ 
		where $\varepsilon_{N,k}=\pm 1$ for all $0\leq k\leq N-1$ with at least half of the coefficients being $+1$ and with 
		$$\Vert p_{N}\Vert_{p}\leq 5\sqrt{N}\hbox{ for }p\in [2,+\infty].$$
		\item For each $N\geq 1$, there is a trigonometric polynomial $p_N^*=\sum_{k=0}^{N-1}a_{N,k}e^{ik\theta}$ 
		where $\vert a_{N,k}\vert\leq 1$ for all $0\leq k\leq N-1$ with at least $\lfloor \frac{N}{4} \rfloor$ coefficients being $+1$ and with 
		$$\|p^{*}_{N}\|_{p}\leq 3N^{1/q}\hbox{ for }p\in [1,2].$$
	\end{enumerate} 
\end{lemma}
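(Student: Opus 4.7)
The plan is to handle (1) and (2) by two separate classical constructions.

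For (1), I would use the Rudin-Shapiro recursion. Set $P_0 = Q_0 = 1$ and, for $n \geq 0$,
\[
P_{n+1}(z) = P_n(z) + z^{2^n} Q_n(z), \qquad Q_{n+1}(z) = P_n(z) - z^{2^n} Q_n(z).
\]
A straightforward induction shows $P_n$ and $Q_n$ are polynomials of degree $2^n - 1$ with coefficients in $\{-1, +1\}$. The key step is the parallelogram identity
\[
|P_{n+1}(e^{i\theta})|^2 + |Q_{n+1}(e^{i\theta})|^2 = 2\bigl(|P_n(e^{i\theta})|^2 + |Q_n(e^{i\theta})|^2\bigr),
\]
which follows by direct expansion and cancellation of the cross terms. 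Iterating yields $|P_n(e^{i\theta})|^2 \leq 2^{n+1}$, hence $\|P_n\|_\infty \leq \sqrt{2}\,2^{n/2}$. For general $N$, I would write $N$ in base $2$ and concatenate shifted Rudin-Shapiro blocks of decreasing dyadic lengths, applying the triangle inequality to obtain $\|p_N\|_\infty \leq 5\sqrt{N}$; the bound for $p \in [2,\infty]$ then follows from the trivial $\|p_N\|_p \leq \|p_N\|_\infty$. The counting condition is harmless: if the resulting polynomial has more $-1$'s than $+1$'s, simply replace it by its negative.

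For (2), the natural candidate is a shifted de la Vall\'ee-Poussin polynomial. Recall that $V_m := 2 F_{2m-1} - F_{m-1}$, where $F_k$ denotes the Fej\'er kernel of order $k$, has Fourier coefficients $\widehat{V}_m(k) = 1$ for $|k| \leq m$, $\widehat{V}_m(k) \in [0,1]$ for $m < |k| < 2m$, and $\widehat{V}_m(k) = 0$ otherwise; moreover, since $F_k \geq 0$ with $\|F_k\|_1 = 1$, the triangle inequality gives $\|V_m\|_1 \leq 3$. Taking $m = \lfloor N/4 \rfloor$ and setting $p_N^*(\theta) := e^{i(2m-1)\theta} V_m(\theta)$ produces a trigonometric polynomial of degree at most $N-1$ whose coefficients are all bounded by $1$, of which at least $2m+1 \geq \lfloor N/4 \rfloor$ equal $+1$ (coming from the central plateau of $V_m$). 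The endpoint bounds read $\|p_N^*\|_1 = \|V_m\|_1 \leq 3$ and, by Parseval, $\|p_N^*\|_2 \leq \sqrt{4m+1} \leq \sqrt{N}$. For intermediate $p \in (1,2)$, the log-convexity of $p \mapsto \log \|p_N^*\|_p$ (Riesz-Thorin interpolation) combines these endpoint estimates to give $\|p_N^*\|_p \leq 3 N^{1/q}$ throughout $[1,2]$.

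The main technical obstacle is the $L^1$ estimate in (2), which is quantitatively sharp: the reverse Cauchy-Schwarz estimate $\|p_N^*\|_2^2 \leq \|p_N^*\|_1 \|p_N^*\|_\infty$, paired with the Parseval lower bound $\|p_N^*\|_2 \geq \tfrac{1}{2}\sqrt{N}$ forced by the plateau of $+1$ coefficients, implies $\|p_N^*\|_\infty \gtrsim N$; so any admissible polynomial must concentrate almost all of its mass in a window of measure $O(1/N)$. The identity $V_m = 2 F_{2m-1} - F_{m-1}$ reduces this delicate concentration phenomenon to the trivial bound $\|F_k\|_1 = 1$, and this is precisely why the de la Vall\'ee-Poussin choice succeeds.
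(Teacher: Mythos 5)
The paper does not prove this lemma at all: it is imported verbatim as Lemma~2.1 of Drasin--Saksman, so there is no proof in the paper to compare against. Your argument supplies the standard construction and is essentially correct. In part~(1), the Rudin--Shapiro recursion and the parallelogram identity give $|P_n|^2\leq 2^{n+1}$, hence $\Vert P_n\Vert_\infty\leq\sqrt{2}\,2^{n/2}$; writing $N$ in base~$2$ and concatenating shifted blocks, the triangle inequality is dominated by the geometric series $\sqrt{2}\sum_{j\geq 0}2^{-j/2}=2+2\sqrt{2}<5$, and since the largest dyadic block has length $2^{n_1}\leq N$ this yields $\Vert p_N\Vert_\infty\leq 5\sqrt{N}$; replacing $p_N$ by $-p_N$ if needed secures the $+1$ count. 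In part~(2), the shifted de la Vall\'ee-Poussin polynomial $e^{i(2m-1)\theta}V_m(\theta)$ with $m=\lfloor N/4\rfloor$ has real coefficients in $[0,1]$, a plateau of $2m+1\geq\lfloor N/4\rfloor$ ones, $\Vert\cdot\Vert_1\leq 3$ from $V_m=2F_{2m-1}-F_{m-1}$ and $\Vert F_k\Vert_1=1$, and $\Vert\cdot\Vert_2\leq\sqrt{4m-1}\leq\sqrt{N}$ (you wrote $\sqrt{4m+1}$, a harmless arithmetic slip since $4m\leq N$); log-convexity then gives $\Vert p_N^*\Vert_p\leq 3^{1-2/q}N^{1/q}\leq 3N^{1/q}$. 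Two small points to tidy: the edge case $N<4$, where $m=0$ and $F_{m-1}$ is undefined, should be handled separately (e.g.\ take $p_N^*=1$, for which all bounds hold trivially); and it would be worth stating explicitly which convention for the Fej\'er kernel you are using, since the identity $\widehat{V}_m(j)=1$ for $|j|\leq m$ depends on taking $\widehat{F}_k(j)=(1-|j|/(k+1))_+$.
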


For any given polynomial $q$ with $q(z)=\sum_{j=0}^d b_j z^j$ with $b_d\ne 0$, we denote 
$d=\hbox{deg}(q)$ and $\displaystyle \Vert q\Vert_{\ell^1}=\sum_{j=0}^d\vert b_j\vert$. 
We set $\displaystyle 2\mathbb{N}=\bigcup_{k\geq 1}\mathcal{A}_{k}$ where for any $k\geq 1,$ 
$\displaystyle \mathcal{A}_{k}=\left\{2^{k}(2j-1);j\in\mathbb{N} \right\}.$ Denote by $\mathcal{P}$ the countable set of polynomials 
with rational coefficients and let us also consider 
pairs $(q,l)$ with $q\in\mathcal{P}$ and $l\in \mathbb{N}$ satisfying $\Vert q\Vert_{\ell^1}\leq l$, displayed as a single sequence $(q_k,l_k)$. 
Clearly $(q_k)$ is a dense set in $H(\mathbb{D})$. 
Hence, for any $k\geq 1,$ we set $d_k=\deg (q_k)$ and we have $$\Vert q_k\Vert_{\ell^1}\leq l_k\hbox{ for every }k\geq 1.$$

For any $\alpha\in\mathbb{R},$ for any positive integer 
$k\geq 1,$ we set $\displaystyle \tilde{q_{k}}(z)=\sum_{j=0}^{d_{k}}(j+1)^\alpha b_{j}^{(k)}z^{j}$ 

Let $\alpha$ be a real number and $p\in (1,\infty].$ For all integer $n\geq 0,$ 
we set $I_{n}=\{2^{n},\ldots,2^{n+1}-1\}.$ Next, for $k\geq 1,$ let us define the integers 
$$\alpha_{k}=1+
\left\lfloor \max\left(l_{k}^{2}(1+d_{k})^{2\max(\alpha, 0)},d_{k}+
\max(3,3+\alpha)l_{k}^2+\max(\alpha, 0)l_{k}\log(1+d_{k})\right)\right\rfloor$$
and 
$$\alpha^{*}_{k}=1+
\left\lfloor \max\left(l_{k}^{q}(1+d_{k})^{q\max(\alpha, 0)},d_{k}+\max(3,3+\alpha)l_{k}^2+\max(\alpha, 0)l_{k}\log(1+d_{k})
\right)\right\rfloor.$$ 

We set $\displaystyle f_{\alpha}=\sum\limits_{n\geq 0}P_{n,\alpha}$ where the blocks $(P_{n,\alpha})$ are polynomials 
defined as follows, using Rudin-Shapiro polynomials given by Lemma \ref{lemma_rud_shap}, 
\begin{equation}\label{poly_2inf}
P_{n,\alpha}(z)  =  \left\{\begin{array}{l} 
0 \hbox{ if } n \mbox{ is odd }\\
0 \hbox{ if } n\in \mathcal{A}_{k}\hbox{ and } 2^{n-1}<\alpha_{k}\\
z^{2^{n}}Q_{n}(z) \hbox{ if } n\in \mathcal{A}_{k}\hbox{ and }2^{n-1}\geq \alpha_{k}
\end{array}\right.
\end{equation}
with for $n\in \mathcal{A}_{k}$, 
$$Q_{n}(z)=\sum\limits_{j\in I_{n}}(j+1)^{-\alpha}c_{j-2^{n}}^{(k)}z^{j-2^{n}}$$ 
where the sequence $(c_{j}^{(k)})$ denotes the sequence of 
the coefficients of the polynomial  
$\displaystyle p_{\lfloor \frac{2^{n(1-\gamma)}}{\alpha_{k}} \rfloor}(z^{\alpha_k})\tilde{q_{k}}(z)$. 

We also set $\displaystyle f^{*}_{\alpha}=\sum\limits_{n\geq 0}P^{*}_{n,\alpha}$ where the blocks $(P^{*}_{n,\alpha})$ are polynomials 
defined as follows, using the de la Vall\'{e}e-Poussin polynomials given by Lemma \ref{lemma_rud_shap},  
\begin{equation}\label{poly_12}
P^{*}_{n,\alpha}(z) = \left\{\begin{array}{l} 
0 \hbox{ if } n \hbox{ is odd }\\
0 \hbox{ if } n\in \mathcal{A}_{k}\hbox{ and } 2^{n-1}<\alpha^{*}_{k}\\
\displaystyle z^{2^{n}}Q_{n}^{*}(z)\hbox{ if } n\in \mathcal{A}_{k}\hbox{ and } 2^{n-1}\geq\alpha^{*}_{k},
\end{array}\right.
\end{equation}
with, for $n\in \mathcal{A}_{k}$,
$$Q_{n}^{*}(z)=\sum\limits_{j\in I_{n}}(j+1)^{-\alpha}c_{j-2^{n}}^{(k)}z^{j-2^{n}}$$ 
where the sequence $(c_{j}^{(k)})$ denotes the sequence of 
the coefficients of the polynomial $\displaystyle p^{*}_{\lfloor \frac{2^{n(1-\gamma)}}{\alpha^{*}_{k}} \rfloor}(z^{\alpha^{*}_{k}})\tilde{q_{k}}(z)$.
\vskip2mm

A combination of Lemma \ref{Conv1} below with the triangle inequality shows 
that the function $f_{\alpha}$ (resp. $f_{\alpha}^*$) belongs 
to $H(\mathbb{D})$. Observe that, if we denote the polynomial $z\mapsto p_{\lfloor \frac{2^{n-1}}{\alpha_{k}} \rfloor}(z^{\alpha_k})$ 
(resp. $z\mapsto p_{\lfloor \frac{2^{n-1}}{\alpha_{k}} \rfloor}^*(z^{\alpha_k^*})$) by $g_k$ (resp. $g_k^*$), we have, 
for all $1\leq p\leq +\infty,$ $\Vert g_k \Vert_p=
\Vert p_{\lfloor \frac{2^{n-1}}{\alpha_{k}} \rfloor}\Vert_p$ (resp. 
$\Vert g_k^* \Vert_p=\Vert p_{\lfloor \frac{2^{n-1}}{\alpha_{k}^*} \rfloor}^*\Vert_p$). 
Finally for any integer $n$, let us denote $(\phi_n(k))$ the sequence defined as follows
$$\phi_{n}(k)=\left\{\begin{array}{ll}(k+1)^{-\alpha}&\hbox{ if }k\in I_n\\0&\hbox{ otherwise.}
\end{array}\right.$$

\begin{lemma}\label{Conv1} Let $\alpha\in\mathbb{R}$. The following estimates hold. 
	\begin{enumerate}[(i)]
		\item For any $2\leq p\leq +\infty,$ any $0<r<1$ and any $n\in\mathbb{N},$ 
		we have $$M_p(P_{n,\alpha},r)\lesssim 2^{n(\frac{1-\gamma}{2}-\alpha)}r^{2^{n}}.$$
		\item For any $1<p<2,$  any $0<r<1$ and any $n\in\mathbb{N},$ 
		we have $$M_p(P^{*}_{n,\alpha},r)\lesssim 2^{n(\frac{1-\gamma}{q}-\alpha)}r^{2^{n}}.$$
	\end{enumerate}
\end{lemma}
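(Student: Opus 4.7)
The plan is to factor out the monomial $z^{2^{n}}$ and reduce the estimate to an $L^{p}$ bound on the residual, which is a mild deformation of a Rudin--Shapiro polynomial. Write $P_{n,\alpha}(z)=z^{2^{n}}Q_{n}(z)$ with
\[
Q_{n}(z)=\sum_{m=0}^{2^{n}-1}(m+2^{n}+1)^{-\alpha}c_{m}^{(k)}z^{m},
\]
where $(c_{m}^{(k)})$ are the Taylor coefficients of $R_{n}(z):=p_{N}(z^{\alpha_{k}})\tilde{q_{k}}(z)$ with $N:=\lfloor 2^{n(1-\gamma)}/\alpha_{k}\rfloor$ (and analogously $R_{n}^{*}:=p_{N}^{*}(z^{\alpha_{k}^{*}})\tilde{q_{k}}(z)$ in case (ii)). Since $M_{p}(\cdot,r)$ is non-decreasing in $r$ on $[0,1)$ and tends to $\|\cdot\|_{L^{p}(\mathbb{T})}$ on polynomials, $M_{p}(P_{n,\alpha},r)\leq r^{2^{n}}\|Q_{n}\|_{L^{p}(\mathbb{T})}$. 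The thresholds built into $\alpha_{k}$ ensure $\deg R_{n}\leq 2^{n(1-\gamma)}+d_{k}\leq\tfrac{1}{2}(2^{n}+1)$ whenever $2^{n-1}\geq\alpha_{k}$ and $n$ is large (depending only on $\gamma$ and $k$), so the reindexing is lossless and $m/(2^{n}+1)\leq 1/2$ on the support of $R_{n}$.

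For the outer factor, a direct application of Lemma~\ref{lemma_rud_shap}, the invariance $\|p_{N}(\cdot^{\alpha_{k}})\|_{p}=\|p_{N}\|_{p}$, and $\|fg\|_{p}\leq\|f\|_{p}\|g\|_{\infty}$ give
\[
\|R_{n}\|_{p}\leq\|p_{N}\|_{p}\|\tilde{q_{k}}\|_{\infty}\lesssim\sqrt{N}\lesssim 2^{n(1-\gamma)/2}\qquad (p\in[2,\infty]),
\]
with the analogous $\|R_{n}^{*}\|_{p}\lesssim N^{1/q}\lesssim 2^{n(1-\gamma)/q}$ for $p\in[1,2]$. The $k$-dependent factor $\|\tilde{q_{k}}\|_{\infty}\leq(d_{k}+1)^{\max(\alpha,0)}l_{k}$ is absorbed into the implicit constant.

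The crux is the transfer inequality $\|Q_{n}\|_{p}\lesssim 2^{-n\alpha}\|R_{n}\|_{p}$ (uniform in $n$ large), which I would establish by two complementary routes. For $\alpha>0$, the Gamma representation $(m+2^{n}+1)^{-\alpha}=\Gamma(\alpha)^{-1}\int_{0}^{\infty}t^{\alpha-1}e^{-(m+2^{n}+1)t}\,dt$, summed against $c_{m}^{(k)}z^{m}$, identifies
\[
Q_{n}(z)=\frac{1}{\Gamma(\alpha)}\int_{0}^{\infty}t^{\alpha-1}e^{-(2^{n}+1)t}R_{n}(e^{-t}z)\,dt,
\]
whence Minkowski's integral inequality together with the monotonicity $\|R_{n}(e^{-t}\cdot)\|_{p}=M_{p}(R_{n},e^{-t})\leq\|R_{n}\|_{p}$ gives exactly $\|Q_{n}\|_{p}\leq(2^{n}+1)^{-\alpha}\|R_{n}\|_{p}$. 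For $\alpha\leq 0$, I would instead use the binomial expansion
\[
(m+2^{n}+1)^{-\alpha}=(2^{n}+1)^{-\alpha}\sum_{j\geq 0}\binom{-\alpha}{j}\Bigl(\frac{m}{2^{n}+1}\Bigr)^{j},
\]
legitimate since $m/(2^{n}+1)\leq 1/2$, swap the finite sum over $m$ with the convergent series over $j$, and rewrite
\[
Q_{n}(z)=(2^{n}+1)^{-\alpha}\sum_{j\geq 0}\binom{-\alpha}{j}(2^{n}+1)^{-j}(z\partial_{z})^{j}R_{n}(z).
\]
The $L^{p}$-Bernstein inequality $\|(z\partial_{z})^{j}R_{n}\|_{p}\leq(\deg R_{n})^{j}\|R_{n}\|_{p}$ combined with the absolute convergence of $\sum_{j}|\binom{-\alpha}{j}|(1/2)^{j}$ then closes the bound.

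Assembling the three ingredients yields $M_{p}(P_{n,\alpha},r)\lesssim r^{2^{n}}\cdot 2^{-n\alpha}\cdot 2^{n(1-\gamma)/2}=2^{n(\frac{1-\gamma}{2}-\alpha)}r^{2^{n}}$ in (i), with the analogous bound in (ii). The main technical delicacy I anticipate is ensuring the binomial series in the $\alpha\leq 0$ step has $n$-uniform sum, which is precisely what the spreading gap $\alpha_{k}$ built into the construction buys us: the ratio $\deg R_{n}/(2^{n}+1)$ stays bounded away from $1$ thanks to $\gamma>0$ and the lower bound on $\alpha_{k}$.
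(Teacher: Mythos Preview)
Your argument is correct and takes a genuinely different route from the paper's. The paper splits according to the value of $p$: for $1<p<\infty$ it views $Q_{n}$ as obtained from $R_{n}$ by a Fourier multiplier $\phi_{n}(j)=(j+1)^{-\alpha}\mathbf{1}_{I_{n}}(j)$ and invokes the Marcinkiewicz Multiplier Theorem after checking the uniform bound and bounded-variation conditions on dyadic blocks; for $p=\infty$ it appeals instead to a fractional Bernstein inequality (and an induction from \cite{MouMun3}) to extract the factor $2^{-n\alpha}$. You instead split according to the sign of $\alpha$: the Laplace/Gamma representation with Minkowski's integral inequality for $\alpha>0$, and the binomial expansion of $(1+m/(2^{n}+1))^{-\alpha}$ combined with the integer $L^{p}$-Bernstein inequality for $\alpha\leq 0$. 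Your approach is more elementary---it avoids the Marcinkiewicz machinery entirely---and has the pleasant feature of treating all $1<p\leq\infty$ uniformly, whereas the paper must handle $p=\infty$ separately. One small wording issue: saying the $k$-dependent factor $\|\tilde{q_{k}}\|_{\infty}\leq(1+d_{k})^{\max(\alpha,0)}l_{k}$ is ``absorbed into the implicit constant'' is misleading, since the constant must be uniform in $k$; what actually happens (and what the paper makes explicit) is that this factor is cancelled by the $\alpha_{k}^{-1/2}$ hidden in $\sqrt{N}=\sqrt{\lfloor 2^{n(1-\gamma)}/\alpha_{k}\rfloor}$, precisely because the first term in the $\max$ defining $\alpha_{k}$ was chosen so that $\sqrt{\alpha_{k}}\geq l_{k}(1+d_{k})^{\max(\alpha,0)}$.
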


\begin{proof}  \begin{enumerate}[(i)]
		\item On one hand, we deal with the case $2\leq p<+\infty.$ Let $n$ be a positive integer. Without loss of generality, we can assume that $n$ belongs to 
		the set $\mathcal{A}_{k}$ for some $k\geq 1$. Let $r$ be in $(0,1).$ Since $\displaystyle r \mapsto M_p(f,.)$ is increasing, we get  
		$$M_p(P_{n,\alpha},r)\leq r^{2^{n}}\|Q_{n}\|_{p}.$$ 
		Then, the polynomial $Q_{n}$ can be viewed as a trigonometric polynomial obtained by an abstract convolution operator on 
		$\mathbb{T},$ given by $(c_{k})_{k\geq 0} \mapsto (\phi_{n}(j)c_{j-2^{n}}^{(k)})_{j\geq 0}$ 
		(where $(c_{j}^{(k)})$ denotes the sequence of the coefficients of the polynomial $\displaystyle p_{\lfloor \frac{2^{n-1}}{\alpha_{k}} \rfloor}\tilde{q_{k}}$). 
		Now, we are going to apply the Marcinkiewicz Multiplier Theorem \cite[Theorem 8.2 p.148]{EdwGau}. 
		To do this, observe that we have, for any $l\geq 1,$ 
		$$\sup_{j\in I_{l}}\vert \phi_{n}(j) \vert \leq\sup_{j\in I_{n}}\vert \phi_{n}(j) \vert \lesssim 2^{-n\alpha}$$ 
		and
		$$\sup_{l}\sum_{j\in I_{l}}\vert \phi_{n}(j+1)-\phi_{n}(j)\vert \leq \sum_{j\in I_{n}}\vert \phi_{n}(j+1)-\phi_{n}(j)\vert \lesssim 2^{-n\alpha}.$$ 
		Hence, taking into account the choice of $\alpha_k$ and Lemma \ref{lemma_rud_shap}, we get
		$$\begin{array}{rl}
		\|Q_{n}\|_{p} & \lesssim 2^{-n\alpha} \Vert p_{\lfloor \frac{2^{n(1-\gamma)}}{\alpha_{k}}\rfloor}\Vert_{p} \|\tilde{q_{k}}\|_{\infty}\\
		& \lesssim 2^{-n\alpha} \sqrt{\frac{2^{n(1-\gamma)}}{\alpha_{k}}}\ l_{k}(1+d_{k})^{\max(\alpha,0)}\\
		& \lesssim 2^{n(\frac{1-\gamma}{2}-\alpha)}.\end{array}$$
		
		Finally, we obtain the desired estimate 
		$$M_p(P_{n,\alpha},r)\lesssim 2^{n(\frac{1-\gamma}{2}-\alpha)}r^{2^{n}}.$$ 
		\\
		On the other hand we deal with the case $p=\infty$. Let us recall that $P_{n,\alpha}(z)=0$ or $z^{2^{n}} Q_{n}(z)$ with $Q_{n}(z)=\sum\limits_{j\in I_{n}}(j+1)^{-\alpha}c_{j-2^{n}}^{(k)}z^{j-2^{n}}$ 
		where $(c_{j}^{(k)})$ denotes the sequence of 
		the coefficients of the polynomial $p_{\lfloor \frac{2^{n(1-\gamma)}}{\alpha_{k}} \rfloor}(z^{\alpha_k})\tilde{q_{k}}(z)$. First, assume that $\alpha\leq 0$. We write
		$$ M_{\infty}(P_{n,\alpha},r)\lesssim r^{2^n} \Vert Q_{n} \Vert_{\infty}.$$ 
		Using the form of $Q_n$, as in the proof of Lemma 3.6 of \cite{MouMun3} we apply a fractional Bernstein's inequality to obtain, taking into consideration Lemma \ref{lemma_rud_shap},
		$$ M_{\infty}(P_{n,\alpha},r)\lesssim r^{2^n} 2^{-n\alpha}\Vert Q_{n} \Vert_{\infty}
		\lesssim 2^{-n\alpha}\Vert p_{\lfloor \frac{2^{n(1-\gamma)}}{\alpha_{k}} \rfloor}\Vert_{\infty}
		\Vert \tilde{q_{k}} \Vert_{\infty}\lesssim 2^{-n\alpha}\sqrt{\lfloor \frac{2^{n(1-\gamma)}}{\alpha_{k}} \rfloor} l_k.$$ 
		Thanks to the choice of $\alpha_k$, we have, for $\alpha\leq 0$, 
		$$M_{\infty}(P_{n,\alpha},r)\lesssim 2^{n(\frac{1-\gamma}{2}-\alpha)}.$$
		To conclude it suffices to mimic the induction of the proof of Lemma 3.7 of \cite{MouMun3} 
		\item The proof is similar as that of the case $2\leq p<+\infty$ by applying Lemma \ref{lemma_rud_shap} for $1<p<2$. 
	\end{enumerate}   
\end{proof}


Now we are ready to obtain the rate of growth of the aforementioned functions $f_{\alpha}$ and $f^*_{\alpha}.$ We refer to Lemmas 3.4, 3.5 and 3.8 of \cite{MouMun3} with obvious modifications. 

\begin{lemma}\label{estim1falpha} \begin{enumerate}
		\item Let $2\leq p\leq+\infty.$ For all $0<r<1,$ the following estimates hold
		$$M_p(f_{\alpha},r)\lesssim \left\{\begin{array}{l}(1-r)^{\alpha-\frac{1-\gamma}{2}}\hbox{ if }\alpha<\frac{1-\gamma}{2},\\
		1\hbox{ if }\alpha>\frac{1-\gamma}{2}\end{array}\right.$$
		
		\item Let $1<p<2$. For all $0<r<1,$ the following estimates hold
		$$M_p(f_{\alpha}^{*},r)\lesssim\left\{\begin{array}{l}(1-r)^{\alpha-\frac{1-\gamma}{q}}\hbox{ if }\alpha<\frac{1-\gamma}{q},\\
		1\hbox{ if }\alpha>\frac{1-\gamma}{q}\end{array}\right.$$
	\end{enumerate}
\end{lemma}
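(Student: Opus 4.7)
The plan is to apply the triangle inequality to the block decomposition $f_{\alpha}=\sum_{n\ge 0}P_{n,\alpha}$ (resp.\ $f_{\alpha}^{*}=\sum_{n\ge 0}P_{n,\alpha}^{*}$) and then invoke the per-block estimate of Lemma \ref{Conv1}. Since many blocks are identically zero (odd $n$, or $n$ with $2^{n-1}<\alpha_{k}$), and bounding the zero blocks by $0$ only helps, it suffices to estimate the full dyadic sum. Concretely, for $2\le p\le\infty$,
$$M_{p}(f_{\alpha},r)\le\sum_{n\ge 0}M_{p}(P_{n,\alpha},r)\lesssim\sum_{n\ge 0}2^{n\beta}r^{2^{n}},\qquad \beta:=\tfrac{1-\gamma}{2}-\alpha,$$
and analogously for $f_{\alpha}^{*}$ with $\beta:=\tfrac{1-\gamma}{q}-\alpha$ when $1<p<2$. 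Hence the lemma reduces to understanding the scalar series $S_{\beta}(r):=\sum_{n\ge 0}2^{n\beta}r^{2^{n}}$.

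To estimate $S_{\beta}(r)$ I would split the sum at the dyadic scale $n_{0}=n_{0}(r)$ defined by $2^{n_{0}}\sim 1/(1-r)$. For $n\le n_{0}$ the factor $r^{2^{n}}$ is at most $1$, so the partial sum is a plain geometric sum controlled by its last term $2^{n_{0}\beta}\sim(1-r)^{-\beta}$ when $\beta>0$ (and by a constant when $\beta<0$). For $n>n_{0}$ I use the inequality $r^{2^{n}}\le e^{-(1-r)2^{n}}$ together with the change of variable $t_{n}:=(1-r)2^{n}$, which doubles at each step. The tail then becomes
$$\sum_{n>n_{0}}2^{n\beta}r^{2^{n}}\lesssim(1-r)^{-\beta}\sum_{n>n_{0}}t_{n}^{\beta}e^{-t_{n}},$$
and because the $t_{n}$'s are geometrically increasing past $1$, the right-hand series converges regardless of the sign of $\beta$.

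Combining the two pieces gives the dichotomy stated in the lemma. When $\beta>0$, i.e.\ $\alpha<\tfrac{1-\gamma}{2}$ (resp.\ $\alpha<\tfrac{1-\gamma}{q}$), both halves contribute of order $(1-r)^{-\beta}$, yielding $M_{p}(f_{\alpha},r)\lesssim(1-r)^{\alpha-(1-\gamma)/2}$ (resp.\ $M_{p}(f_{\alpha}^{*},r)\lesssim(1-r)^{\alpha-(1-\gamma)/q}$). When $\beta<0$, i.e.\ $\alpha>\tfrac{1-\gamma}{2}$ (resp.\ $\alpha>\tfrac{1-\gamma}{q}$), already $\sum_{n\ge 0}2^{n\beta}<\infty$, and $S_{\beta}(r)$ is bounded uniformly in $r\in(0,1)$. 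The argument for $1<p<2$ and $f_{\alpha}^{*}$ is identical, invoking part (ii) of Lemma \ref{Conv1} instead of part (i).

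The only mildly delicate point is the tail estimate in the subcritical case $\beta>0$, where one must absorb the polynomial factor $t_{n}^{\beta}$ against the exponential decay $e^{-t_{n}}$; since $t_{n}$ grows geometrically once $n>n_{0}$, this is immediate but should be written down carefully. Everything else is bookkeeping that exactly mirrors the sub/supercritical dichotomy used in \cite{MouMun3} for the case $\gamma=0$, so I would cite Lemmas 3.4, 3.5 and 3.8 there for the routine parts.
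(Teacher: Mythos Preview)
Your proposal is correct and follows essentially the same approach as the paper: the paper does not give a self-contained proof but simply refers to Lemmas 3.4, 3.5 and 3.8 of \cite{MouMun3} ``with obvious modifications,'' which is precisely the triangle-inequality-plus-dyadic-sum argument you sketched (and you even cite those same lemmas). Your handling of the scalar series $S_{\beta}(r)$ via the split at $2^{n_{0}}\sim 1/(1-r)$ is the standard computation underlying those referenced lemmas.
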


Now we are going to prove that the functions $f_{\alpha}$ and $f^{*}_{\alpha}$ are $U_{\beta^{\gamma}}$-frequently hypercyclic for $T_{\alpha}$.

\begin{proposition}\label{propfhcdb} For $p\geq 2$ (resp. $1<p<2$), the function $f_{\alpha}$ (resp. $f^{*}_{\alpha}$) 
	is a $U_{\beta^{\gamma}}$-frequently hypercyclic vector for the operator $T_{\alpha}$.
\end{proposition}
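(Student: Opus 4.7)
The plan is to show that for each polynomial $q_{k}$ in the dense enumeration there is a set $E_{k}\subset\N$ of ``good times'' $N$ at which $T_{\alpha}^{N}f_{\alpha}$ is close to $q_{k}$ in $H(\D)$, and that $E_{k}$ has positive upper $\beta^{\gamma}$-density. Since $(q_{k})$ is dense in $H(\D)$ and, in the enumeration, infinitely many terms approximate any given polynomial, this will yield $\mathcal{U}_{\beta^{\gamma}}$-frequent hypercyclicity of $f_{\alpha}$ for $T_{\alpha}$. I will detail the case $p\geq 2$; the case $1<p<2$ is formally identical upon replacing Rudin-Shapiro by the de la Vall\'ee-Poussin polynomials used in the definition of $f_{\alpha}^{*}$.

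For $n\in\mathcal{A}_{k}$ with $2^{n-1}\geq\alpha_{k}$, set $m_{n}=\lfloor 2^{n(1-\gamma)}/\alpha_{k}\rfloor$ and for each $0\leq l\leq m_{n}-1$ define $N_{n,l}=l\alpha_{k}+2^{n}$. The key computation, resting on the telescoping identity $T_{\alpha}^{N}(z^{m})=(m+1)^{\alpha}(m-N+1)^{-\alpha}z^{m-N}$, the explicit form of $P_{n,\alpha}$, and the fact that $\alpha_{k}>d_{k}$ (so that each monomial of $p_{m_{n}}(z^{\alpha_{k}})\tilde{q_{k}}(z)$ comes from a unique pair of indices), will give
\begin{equation*}
T_{\alpha}^{N_{n,l}}P_{n,\alpha}(z)=\varepsilon_{m_{n},l}\,q_{k}(z)+R_{n,l}(z),
\end{equation*}
where $R_{n,l}$ is supported on indices of the form $u\alpha_{k}+r$ with $u\geq 1$ and $0\leq r\leq d_{k}$. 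A routine geometric bound on a compact disc $\{|z|\leq r_{0}\}$ with $r_{0}<1$ will then give $M_{\infty}(R_{n,l},r_{0})\lesssim C_{k,\alpha,r_{0}}\,r_{0}^{\alpha_{k}}$, a quantity that tends to $0$ as $k\to\infty$ since $\alpha_{k}\geq l_{k}^{2}\to\infty$.

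I would then control the contribution of the other blocks on $\{|z|\leq r_{0}\}$. For $n'<n$, $\deg(P_{n',\alpha})<2^{n}\leq N_{n,l}$ so $T_{\alpha}^{N_{n,l}}P_{n',\alpha}=0$. For $n'>n$, necessarily $n'\geq n+2$ since both lie in $2\N$, and $T_{\alpha}^{N_{n,l}}P_{n',\alpha}$ is supported on indices at least $2^{n'}-N_{n,l}\geq 2^{n+1}$; using that $P_{n',\alpha}\ne 0$ forces $\alpha_{k'}\leq 2^{n'-1}$, together with the fact that $\alpha_{k'}$ dominates $l_{k'}(d_{k'}+1)^{\max(\alpha,0)}$ by construction, a geometric estimate absorbs any polynomial-in-$n'$ factor and yields $\sum_{n'>n}M_{\infty}(T_{\alpha}^{N_{n,l}}P_{n',\alpha},r_{0})\to 0$ as $n\to\infty$.

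To conclude, given a non-empty open $U\subset H(\D)$, I will pick a polynomial $P\in U$ together with $r_{0}<1$ and $\delta>0$ such that $\{g\in H(\D):M_{\infty}(g-P,r_{0})<\delta\}\subset U$, then use the facts that infinitely many $q_{k}$ approximate $P$ and that $\alpha_{k}\to\infty$ to choose $k$ with $M_{\infty}(q_{k}-P,r_{0})<\delta/3$ and $C_{k,\alpha,r_{0}}\,r_{0}^{\alpha_{k}}<\delta/3$ simultaneously; the two preceding steps then give $T_{\alpha}^{N_{n,l}}f_{\alpha}\in U$ for all large $n\in\mathcal{A}_{k}$ with $\varepsilon_{m_{n},l}=+1$. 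The last ingredient is the lower bound for $\overline{d}_{\beta^{\gamma}}(E_{k})$ with $E_{k}=\{N_{n,l}\}$: setting $M_{n}=2^{n}+m_{n}\alpha_{k}$, using the asymptotic $\sum_{N\leq M}e^{N^{\gamma}}\sim\gamma^{-1}M^{1-\gamma}e^{M^{\gamma}}$, the fact that at least $m_{n}/2$ indices $l$ satisfy $\varepsilon_{m_{n},l}=+1$ (Rudin-Shapiro), and the uniform estimate $e^{N^{\gamma}}\gtrsim e^{(2^{n})^{\gamma}}$ for $N\in E_{k}\cap[2^{n},M_{n}]$, one obtains $\overline{d}_{\beta^{\gamma}}(E_{k})\gtrsim 1/\alpha_{k}>0$. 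The main obstacle I anticipate is the simultaneous uniform control of $R_{n,l}$ and of the tail from higher blocks in a single prescribed neighborhood of $q_{k}$; the large lower bound built into $\alpha_{k}$ (at least $l_{k}^{2}(1+d_{k})^{2\max(\alpha,0)}$) is what forces $r_{0}^{\alpha_{k}}$ to decay super-polynomially in $k$ against only polynomially growing constants, making the balancing feasible.
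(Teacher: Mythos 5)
Your proof is essentially the same approach as the paper's: the good return times $N_{n,l}=l\alpha_{k}+2^{n}$ (with $\varepsilon_{m_{n},l}=+1$) are exactly the set $\mathcal{B}_{n}$ the paper uses, the shift computation exploits the telescoping identity and the separation $\alpha_{k}>d_{k}$, and the error terms are absorbed by the super-polynomial decay forced by the size of $\alpha_{k}$ — all of which the paper abbreviates by referring to Lemma~3.9 of \cite{MouMun3}. The one place you genuinely depart is the density estimate: you derive $\overline{d}_{\beta^{\gamma}}(E_{k})\gtrsim 1/\alpha_{k}$ by comparing $\frac{m_{n}}{2}e^{(2^{n})^{\gamma}}$ against the full sum $\sim\gamma^{-1}M_{n}^{1-\gamma}e^{M_{n}^{\gamma}}$, and this is in fact the correct bound given that $\mathcal{B}_{n}$ has only $\sim m_{n}/2$ elements spaced $\alpha_{k}$ apart; the paper's displayed inequality~(\ref{denshdcons}) compares against the sum over \emph{all} integers in $[2^{n},2^{n}+\tfrac12\lfloor 2^{n(1-\gamma)}\rfloor]$ and is therefore too strong as written (the set has a factor $\alpha_{k}$ fewer terms). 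Since only strict positivity of the $\beta^{\gamma}$-density is needed, and the bound is allowed to depend on $k$, your $k$-dependent lower bound $\gtrsim 1/\alpha_{k}$ is perfectly adequate and, if anything, tightens up the paper's reasoning at this point. Everything else — the vanishing of earlier blocks under $T_{\alpha}^{N_{n,l}}$, the double-exponential decay of later blocks against the polynomially bounded constants, the approximation of a basic open set in $H(\mathbb{D})$ by some $q_{k}$ with $\alpha_{k}$ large — is correct and matches the intended argument.
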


\begin{proof}
	We only prove the frequent hypercyclicity of $f_{\alpha}$ for the operator $T_{\alpha}$. We do not repeat the 
	details for $f^{*}_{\alpha}$: it will be enough to make the appropriate modifications. 
	\vskip2mm
	
	Let $k$ be a large enough integer. Let us consider $n\in\mathcal{A}_{k}$ such that $2^{n-1}\geq \alpha_{k}.$ We consider $\mathcal{B}_{n}$ the set of $s$ in 
	$I_n$ such that the coefficient $z^{s}$ in the polynomial $z^{2^{n}}p_{\lfloor \frac{2^{n(1-\gamma)}}{\alpha_{k}} \rfloor}(z^{\alpha_{k}})$ is equal to $1$ and we denote by $\displaystyle T_{k}=\left\{s: s\in \mathcal{B}_{n},\ n\in\mathcal{A}_{k}, \  2^{n-1}\geq \alpha_{k}\right\}.$ \\
	Observe that $\max(\mathcal{B}_{n})\leq 2^n+\lfloor2^{n(1-\gamma)}\rfloor$ and since at least half of the coefficients of $p_{\lfloor \frac{2^{n(1-\gamma)}}{\alpha_{k}} \rfloor}$ being $+1$, we get 
		\begin{equation}\label{denshdcons}\frac{\sum\limits_{j\leq \max(\mathcal{B}_{n});\atop j\in T_{k}}e^{j^{\gamma}}}{\sum\limits_{j\leq \max(\mathcal{B}_{n})}e^{j^{\gamma}}}\geq 
			\frac{\sum\limits_{j=2^n}^{2^n+\lfloor 2^{-1}\lfloor\frac{2^{n(1-\gamma)}}{\alpha_k}-1\rfloor\rfloor}e^{j^{\gamma}}}{\sum\limits_{j=1}^{2^n+\lfloor2^{n(1-\gamma)}\rfloor}e^{j^{\gamma}}}=
			\frac{\sum\limits_{j=1}^{2^n+\lfloor 2^{-1}\lfloor\frac{2^{n(1-\gamma)}}{\alpha_k}-1\rfloor\rfloor}e^{j^{\gamma}}}{\sum\limits_{j=1}^{2^n+\lfloor2^{n(1-\gamma)}\rfloor}e^{j^{\gamma}}}-\frac{\sum\limits_{j=1}^{2^n-1}e^{j^{\gamma}}}{\sum\limits_{j=1}^{2^n+\lfloor2^{n(1-\gamma)}\rfloor}e^{j^{\gamma}}}..
	\end{equation}

	Clearly we have
$$\left(2^n+\lfloor 2^{-1}\lfloor\alpha_k^{-1}2^{n(1-\gamma)}-1\rfloor\rfloor\right)^{\gamma}-\left(2^n+\lfloor2^{n(1-\gamma)}\rfloor\right)^{\gamma}\rightarrow -\gamma(1-\frac{1}{2\alpha_k})$$
and
$$\left(2^n-1\right)^{\gamma}-\left(2^n+\lfloor2^{n(1-\gamma)}\rfloor\right)^{\gamma}\rightarrow -\gamma,$$
which implies, using similar estimations as those of the proof of Lemma \ref{diffdensbeta},
$$\frac{\sum\limits_{j=1}^{2^n+\lfloor 2^{-1}\lfloor\frac{2^{n(1-\gamma)}}{\alpha_k}-1\rfloor\rfloor}e^{j^{\gamma}}}{\sum\limits_{j=1}^{2^n+\lfloor2^{n(1-\gamma)}\rfloor}e^{j^{\gamma}}}\rightarrow e^{-\gamma(1-\frac{1}{2\alpha_k})}\quad 
\hbox{ and }\quad\frac{\sum\limits_{j=1}^{2^n-1}e^{j^{\gamma}}}{\sum\limits_{j=1}^{2^n+\lfloor2^{n(1-\gamma)}\rfloor}e^{j^{\gamma}}}\rightarrow e^{-\gamma},\quad \hbox{ as }n\rightarrow +\infty.$$
	
	Hence the inequality (\ref{denshdcons}) ensures that 
	$$\overline{d}_{\beta^{\gamma}}(T_k)>0.$$
	Then let $\alpha$ be a real number and let $k\in\mathbb{N}$. Let us consider $s\in\mathcal{B}_{n}$ with $n\in\mathcal{A}_{k}$ satisfying 
	$2^{n-1}\geq\alpha_{k}.$ As in the proof of Lemma 3.9 of \cite{MouMun3} with easy modifications, we can prove that
	$$\sup_{\vert z\vert = 1-\frac{1}{l_{k}}} \vert T_{\alpha}^{s}(f_{\alpha})(z)-q_{k}(z)\vert \lesssim \frac{1}{l_{k}},$$ 
	provided that $k$ is chosen large enough. This allows to obtain the frequent hypercyclicity of $f_{\alpha}$.
\end{proof}

In summary, Lemma \ref{estim1falpha} and Proposition \ref{propfhcdb} leads to the following result, which shows that the statement of Theorem \ref{ubetafhcalpha_no_critic} is optimal whenever $\alpha$ is not the critical exponent.

\begin{theorem}\label{thmubetaopti} Let $0<\gamma<1$ and $1<p\leq\infty$. 
	\begin{enumerate} 
		\item for $\alpha<\frac{1-\gamma}{\max(2,q)}$ there exists a $\mathcal{U}_{\beta^{\gamma}}$-frequently hypercyclic function for the operator $T_{\alpha}$ such that 
		$$M_p(f,r)\lesssim (1-r)^{\alpha-\frac{1-\gamma}{\max(2,q)}};$$
	\item for $\alpha>\frac{1-\gamma}{\max(2,q)}$ there exists a $\mathcal{U}_{\beta^{\gamma}}$-frequently hypercyclic function for the operator $T_{\alpha}$ such that 
	$$M_p(f,r)\lesssim 1.$$
	\end{enumerate}	
\end{theorem}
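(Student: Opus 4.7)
The statement at this point is essentially a bookkeeping combination of the two technical tools that have just been set up: the $L^p$-growth estimates of Lemma~\ref{estim1falpha} and the $\mathcal{U}_{\beta^{\gamma}}$-frequent hypercyclicity of the explicit functions $f_{\alpha}$ and $f_{\alpha}^{*}$ given in Proposition~\ref{propfhcdb}. So I would not introduce any new construction; rather, I would simply assemble the pieces case by case depending on the position of $p$ relative to the conjugate index.

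My plan is as follows. First, split the argument according to whether $p \geq 2$ or $1 < p < 2$. In the first range, $q \le 2$, hence $\max(2,q) = 2$ and the target critical exponent reads $\frac{1-\gamma}{\max(2,q)} = \frac{1-\gamma}{2}$. Take $f = f_{\alpha}$. Proposition~\ref{propfhcdb} ensures that $f_{\alpha}$ is a $\mathcal{U}_{\beta^{\gamma}}$-frequently hypercyclic vector for $T_{\alpha}$, while part~(1) of Lemma~\ref{estim1falpha} supplies the bound $M_p(f_{\alpha},r)\lesssim (1-r)^{\alpha-\frac{1-\gamma}{2}}$ when $\alpha < \frac{1-\gamma}{2}$ and $M_p(f_{\alpha},r)\lesssim 1$ when $\alpha > \frac{1-\gamma}{2}$. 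This is exactly what is required.

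In the second range $1 < p < 2$, the conjugate exponent satisfies $q > 2$, hence $\max(2,q) = q$ and the critical exponent becomes $\frac{1-\gamma}{q}$. Here I would take $f = f_{\alpha}^{*}$. Again Proposition~\ref{propfhcdb} provides the dynamical property, and part~(2) of Lemma~\ref{estim1falpha} yields $M_p(f_{\alpha}^{*},r)\lesssim (1-r)^{\alpha-\frac{1-\gamma}{q}}$ for $\alpha < \frac{1-\gamma}{q}$ and $M_p(f_{\alpha}^{*},r)\lesssim 1$ for $\alpha > \frac{1-\gamma}{q}$. Combining the two cases yields both assertions of the theorem uniformly in $1 < p \leq \infty$.

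There is essentially no obstacle left at this stage: the delicate work (the convolution/Marcinkiewicz estimate for the Rudin--Shapiro block $Q_n$ and the fractional Bernstein input for $p = \infty$) has been absorbed into Lemma~\ref{estim1falpha}, and the lower bound on the weighted density of the ``hitting times'' $T_k$ has been absorbed into Proposition~\ref{propfhcdb}. The only step that warrants any verification in the write-up is the purely arithmetic identification $\max(2,q) \in \{2,q\}$ according to the range of $p$, ensuring that the two cases stitch together into the single statement of the theorem with the uniform exponent $\frac{1-\gamma}{\max(2,q)}$.
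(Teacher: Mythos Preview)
Your proposal is correct and matches the paper's own argument essentially verbatim: the theorem is stated as a direct consequence of Lemma~\ref{estim1falpha} and Proposition~\ref{propfhcdb}, with the case split $p\geq 2$ versus $1<p<2$ dictating whether one uses $f_{\alpha}$ or $f_{\alpha}^{*}$. There is nothing to add or change.
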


\begin{remark}
	{\rm Let $0<\gamma<1$. It seems important to note that the functions constructed for the proof of Theorem \ref{thmubetaopti} are $\mathcal{U}_{\beta^{\gamma}}$-frequently hypercyclic for $T_{\alpha}$ but neither $\mathcal{U}_{\beta^{\gamma'}}$-frequently hypercyclic for $0 < \gamma'<\gamma$ nor $\mathcal{U}$-frequently hypercyclic, since they don't satisfy the estimates given by Theorem \ref{ubetafhcalpha_no_critic} or Theorem \ref{ufhcalpha_no_critic}.}
\end{remark}

Finally let us say some words for the case $p=1$. As in the proof of Theorem \ref{ufhcalpha_no_criticp1}, observe that a $\mathcal{U}_{\beta^{\gamma}}$-frequently hypercyclic function is necessarily hypercyclic and a $\mathcal{U}$-frequently hypercyclic function is necessarily $\mathcal{U}_{\beta^{\gamma}}$-frequently hypercyclic. This leads to the following statement. 

\begin{theorem}\label{udfhcalpha_no_criticp1}
Let $0<\gamma<1$. Let $f$ be a $\mathcal{U}_{\beta^{\gamma}}$-frequently hypercyclic function for the operator $T_{\alpha}$. Then, the following assertions hold 
	\begin{align*}&\limsup_{r\rightarrow 1^{-}}\left((1-r)^{-\alpha}M_1(f,r)\right)=+\infty ,\quad\hbox{ if }\alpha\leq 0,\\
	&\limsup_{r\rightarrow 1^{-}}M_1(f,r)>0,\quad\hbox{ if }\alpha>0.
	\end{align*}
	These results are optimal in the following sense: for any positive integer $l\geq 1$, there exists a $\mathcal{U}_{\beta^{\gamma}}$-frequently hypercyclic function for the operator $T_{\alpha}$ such that for every $0<r<1$ sufficiently large $$M_1(f,r)\lesssim\left\{\begin{array}{ll} (1-r)^{\alpha}\log_l(-\log(1-r))& \hbox{ if } \alpha<0\\1&
	\hbox{ if } \alpha>0.\end{array}\right.$$
\end{theorem}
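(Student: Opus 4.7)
The plan is to obtain both the lower bounds and the optimality part by sandwiching the notion of $\mathcal{U}_{\beta^{\gamma}}$-frequent hypercyclicity between ordinary hypercyclicity and frequent hypercyclicity, exactly as was done for $\mathcal{U}$-frequent hypercyclicity in the proof of Theorem \ref{ufhcalpha_no_criticp1}. The key qualitative input is the chain of implications: frequent hypercyclicity implies $\mathcal{U}$-frequent hypercyclicity, which (by the density comparison recalled in Section \ref{subsectiondens}, namely $\overline{d}(E)\leq \overline{d}_{\beta^{\gamma}}(E)\leq \overline{d}_{\beta^{1}}(E)$) implies $\mathcal{U}_{\beta^{\gamma}}$-frequent hypercyclicity, which in turn implies hypercyclicity (since for $\gamma<1$ a set of positive upper $\beta^{\gamma}$-density is necessarily infinite, and even for $\gamma=1$ this amounts to infiniteness).

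For the two lower bound assertions, first I would observe that any $\mathcal{U}_{\beta^{\gamma}}$-frequently hypercyclic function $f$ for $T_{\alpha}$ is in particular hypercyclic for $T_{\alpha}$. Hence Theorem \ref{hcalpha} applied with $p=1$ gives directly
\[
\limsup_{r\rightarrow 1^{-}}\left[(1-r)^{-\alpha}M_1(f,r)\right]=+\infty \quad\text{if }\alpha\leq 0,
\]
and
\[
\limsup_{r\rightarrow 1^{-}} M_1(f,r)>0 \quad\text{if }\alpha>0,
\]
which is exactly the required lower boundary behavior. No new density argument is needed here: the $\mathcal{U}_{\beta^{\gamma}}$-frequent hypercyclicity hypothesis is only used through its (weaker) consequence, hypercyclicity.

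For the optimality part, the strategy is dual. I would invoke Theorem \ref{anc_main_optimal2}, which for any integer $\ell\geq 1$ furnishes a frequently hypercyclic function $f$ for $T_{\alpha}$ with the prescribed $L^1$-growth, namely $M_1(f,r)\lesssim (1-r)^{\alpha}\log_\ell(-\log(1-r))$ if $\alpha<0$ and $M_1(f,r)\lesssim 1$ if $\alpha>0$, for $r$ close enough to $1$. Since $\underline{d}(E)\leq \overline{d}(E)\leq \overline{d}_{\beta^{\gamma}}(E)$ for every $E\subset\mathbb{N}$, any frequently hypercyclic vector is $\mathcal{U}_{\beta^{\gamma}}$-frequently hypercyclic. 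Thus the functions produced by Theorem \ref{anc_main_optimal2} are already $\mathcal{U}_{\beta^{\gamma}}$-frequently hypercyclic witnesses realizing the upper bounds, and optimality follows.

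Since each step reduces to invoking a previously proved result via the chain of implications, there is no genuinely hard point to isolate; the only care needed is to verify that the density inequalities of Section \ref{subsectiondens} indeed provide both implications (frequent $\Rightarrow$ $\mathcal{U}_{\beta^{\gamma}}$-frequent and $\mathcal{U}_{\beta^{\gamma}}$-frequent $\Rightarrow$ hypercyclic) in the required directions, which is immediate from the fact that positive $\overline{d}_{\beta^{\gamma}}$-density in particular forces the set to be infinite.
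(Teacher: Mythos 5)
Your proposal is correct and takes essentially the same approach as the paper: the lower bounds follow from Theorem~\ref{hcalpha} because $\mathcal{U}_{\beta^{\gamma}}$-frequent hypercyclicity implies hypercyclicity, and optimality follows from Theorem~\ref{anc_main_optimal2} because frequent (hence $\mathcal{U}$-frequent, hence $\mathcal{U}_{\beta^{\gamma}}$-frequent) hypercyclicity of the witness function is inherited via the density inequalities of Section~\ref{subsectiondens}.
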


\section{Optimal estimates: the case of the critical exponent}\label{sectioncritic}
%

In this section, we are going to show that the growth of $\mathcal{U}$-frequently or $U_{\beta^{\gamma}}$-frequently hypercyclic functions for $T_{\alpha}$ can be arbitrarily slow when $\alpha$ is the critical exponent. The situation will therefore be similar to the hypercyclic case for which for all $1\leq p\leq\infty$ the critical exponent is $\alpha=0$ and, according Theorem \ref{hcalpha}, the two following properties hold: for all hypercyclic function $f$ for $T_{\alpha}$, $\limsup_{r\rightarrow 1^{-}}M_p(f,r)=+\infty$ and for any function $\varphi:[0,1)\rightarrow \mathbb{R}_+$ tending to infinity as $r$ tends to $1$, there is a hypercyclic function $f$ such that $M_p(f,r)\leq \varphi(r)$. For this, we are going to adapt the constructive method used in Section \ref{subsectionconstructive}. Before we start, we establish a lemma that will be useful in the following.   

\begin{lemma}\label{keylemmaestim}
Let $(w_n)$ be an increasing sequence of positive integers such that $\frac{w_{n+1}}{w_n}\rightarrow +\infty$ as $n$ tends to infinity. Let $(a_n)$ be a bounded sequence of positive real numbers such that $\sum a_n=+\infty$. If we denote by $h:\mathbb{R}_+\rightarrow\mathbb{R}_+$ a continuous increasing function with, for all $n\in\mathbb{N}$, $h(n)=w_n$, the following estimate holds
$$\sum_{n\geq 0} a_nr^{w_n}\sim (\theta_a\circ h^{-1})\left(\frac{1}{1-r}\right)\quad\hbox{ as }r\rightarrow 1^{-},$$ 
where for all $x\in\mathbb{R_+}$, $\theta_a(x)=\sum\limits_{n\leq x} a_n$.
\end{lemma}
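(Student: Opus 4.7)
The plan is to compare $S(r):=\sum_{n\geq 0}a_n r^{w_n}$ with $\theta_a(N(r))$, where I set $\phi(r)=h^{-1}\!\left(\tfrac{1}{1-r}\right)$ and $N(r)=\lfloor\phi(r)\rfloor$. Since $\theta_a$ is constant on each interval $[N,N{+}1)$, one has $\theta_a(\phi(r))=\theta_a(N(r))$, so it is enough to show $S(r)\sim\theta_a(N(r))$ as $r\to 1^-$. Note also that $\phi(r)\to+\infty$ (since $h^{-1}$ is continuous increasing and $1/(1-r)\to+\infty$) and, as $\sum a_n=+\infty$, $\theta_a(N(r))\to+\infty$. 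I would then split $S(r)=S_1(r)+S_2(r)$ with $S_1(r)=\sum_{n\leq N(r)} a_n r^{w_n}$ and $S_2(r)=\sum_{n>N(r)} a_n r^{w_n}$, and handle the two pieces separately.

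For the main block $S_1$, the upper bound $S_1(r)\leq\theta_a(N(r))$ is trivial (each $r^{w_n}\leq 1$). For the lower bound, fix a (large) integer $K$. The hypothesis $w_{n+1}/w_n\to+\infty$ forces $w_{N(r)-K}/w_{N(r)}\to 0$ as $r\to 1^-$; combining with $w_{N(r)}=h(N(r))\leq h(\phi(r))=\tfrac{1}{1-r}$, one gets $(1-r)\,w_{N(r)-K}\to 0$, hence $r^{w_{N(r)-K}}\to 1$. Since $(w_n)$ is increasing, $r^{w_n}\geq r^{w_{N(r)-K}}$ for every $n\leq N(r)-K$, and using the boundedness of $(a_n)$:
\[
S_1(r)\;\geq\;r^{w_{N(r)-K}}\sum_{n=0}^{N(r)-K}a_n\;\geq\;r^{w_{N(r)-K}}\bigl(\theta_a(N(r))-K\|a\|_\infty\bigr).
\]
Dividing by $\theta_a(N(r))\to+\infty$ and taking the liminf gives $\liminf_{r\to 1^-} S_1(r)/\theta_a(N(r))\geq 1$, so $S_1(r)\sim\theta_a(N(r))$.

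For the tail $S_2$, I use the growth condition in the opposite direction: fix $M\geq 2$; for $r$ sufficiently close to $1$, $w_{n+1}/w_n\geq M$ for every $n\geq N(r)$. By induction, $w_{N(r)+k}\geq M^{k-1}w_{N(r)+1}\geq \tfrac{M^{k-1}}{1-r}$ for $k\geq 1$, the last inequality because $w_{N(r)+1}=h(N(r)+1)\geq h(\phi(r))=\tfrac{1}{1-r}$. Hence $r^{w_{N(r)+k}}\leq r^{M^{k-1}/(1-r)}$, and since $\log r/(1-r)\to -1$, for $r$ close enough to $1$ each such term is bounded by $\exp(-M^{k-1}/2)$. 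Summing over $k\geq 1$:
\[
S_2(r)\;\leq\;\|a\|_\infty\sum_{k\geq 1}e^{-M^{k-1}/2}\;=\;O(1),
\]
which is negligible against $\theta_a(N(r))\to+\infty$. Together with the previous step, this yields $S(r)\sim\theta_a(N(r))=(\theta_a\circ h^{-1})\!\left(\tfrac{1}{1-r}\right)$.

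The only subtlety is to ensure that the two quantitative consequences of $w_{n+1}/w_n\to+\infty$ — smallness of $w_{N(r)-K}/w_{N(r)}$ (used to make $r^{w_n}\approx 1$ on the main block) and geometric growth of $w_{N(r)+k}$ relative to $1/(1-r)$ (used to kill the tail) — both hold uniformly once $r$ is close to $1$. Since both come from the same gap hypothesis, this is routine, and there is no deeper obstacle.
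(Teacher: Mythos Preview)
Your proof is correct and follows essentially the same strategy as the paper: both split the series at the index $N$ where $w_N\approx 1/(1-r)$, use the gap hypothesis $w_{n+1}/w_n\to\infty$ to force $r^{w_{N-1}}\to 1$ on the main block and to dominate the tail by a convergent series of the form $\sum e^{-M^{k-1}/2}$. The only cosmetic difference is that the paper first establishes the asymptotic along the discrete sequence $r=1-1/j$ ($j\in\mathbb{N}$) and then sandwiches a general $r$ between two consecutive such values, whereas you work directly with continuous $r$ --- slightly more streamlined (and your parameter $K$ could simply be taken equal to $1$, as the paper does), but the two arguments are otherwise identical.
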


\begin{proof} By hypothesis the power series $S(x):=\sum\limits_{n\geq 0}a_nx^n$ has radius of convergence $1$. For all $j\geq 2$, we denote by $n_j$ the only positive integer satisfying
	$$w_{n_j+1}>j\quad\hbox{ and }\quad w_{n_j}\leq j.$$
	Then we write
	\begin{equation}\label{eq_decompo1}
	S\left(1-\frac{1}{j}\right)=\sum_{n=0}^{n_j+1}a_n\left(1-\frac{1}{j}\right)^{w_n}+\sum_{n=n_j+2}^{+\infty}a_n\left(1-\frac{1}{j}\right)^{w_n}.
	\end{equation}
On one hand, we get, thanks to the choice of $j$ and using the inequality $1-t\leq e^{-t}$,
	$$\sum_{n=n_j+2}^{+\infty}a_n\left(1-\frac{1}{j}\right)^{w_n}\leq \sum_{n=n_j+2}^{+\infty}a_n e^{-\frac{w_n}{j}}\leq M\sum_{k=2}^{+\infty}e^{-\frac{w_{n_j+k}}{j}}\leq M\sum_{k=2}^{+\infty}e^{-\frac{w_{n_j+k}}{w_{n_j+1}}},$$
where $M=\sup\vert a_n\vert$. Moreover, up to take $j$ large enough, since $\frac{w_{n+1}}{w_n}$ tends to infinity, we have, for all $k\geq 2$, 
	$\frac{w_{n_j+k}}{w_{n_j+1}}\geq 2^{k-1}$, which implies 
	 
\begin{equation}\label{eq_decompo2}\sum_{n=n_j+2}^{+\infty}a_n\left(1-\frac{1}{j}\right)^{w_n}\leq M \sum_{k=2}^{+\infty}e^{-2^{k-1}}.
\end{equation}

On the other hand, we have, for all $n=0,\dots,n_j-1$, 
$$\left(1-\frac{1}{j}\right)^{w_n}\geq \left(1-\frac{1}{j}\right)^{w_{n_j-1}}=\left(1-\frac{1}{j}\right)^{w_{n_j}\frac{w_{n_j-1}}{w_{n_j}}}\geq\left(1-\frac{1}{j}\right)^{j\frac{w_{n_j-1}}{w_{n_j}}}.$$
From this, we derive the following inequality
$$ \sum_{n=0}^{n_j+1}a_n\left(1-\frac{1}{j}\right)^{w_n}\geq \left(1-\frac{1}{j}\right)^{j\frac{w_{n_j-1}}{w_{n_j}}}\sum_{n=0}^{n_j-1}a_n.$$
Thus, we obtain
$$\left(1-\frac{1}{j}\right)^{j\frac{w_{n_j-1}}{w_{n_j}}}\frac{\theta_a(n_j-1)}{\theta_a(n_j)}\leq \frac{\sum\limits_{n=0}^{n_j+1}a_n\left(1-\frac{1}{j}\right)^{w_n}}{\theta_a(n_j)}\leq \frac{\theta_a(n_j+1)}{\theta_a(n_j)}.$$
Since $\frac{a_n}{\theta_a(n)}=1-\frac{\theta_a(n-1)}{\theta_a(n)}\rightarrow 0$ and  $\frac{w_{n_j-1}}{w_{n_j}}\rightarrow 0$, we obtain
$$\sum_{n=0}^{n_j+1}a_n\left(1-\frac{1}{j}\right)^{w_n}\sim \theta_a(n_j),\quad\hbox{ as }j\rightarrow +\infty,$$
which gives, thanks to (\ref{eq_decompo1}) and (\ref{eq_decompo2}),
\begin{equation}\label{equ_equiv_nv}
S\left(1-\frac{1}{j}\right)\sim\theta_a(n_j)\quad\hbox{ as }j\rightarrow +\infty.
\end{equation}
By construction the sequence $(n_j)$ satisfies $n_j\leq h^{-1}(j)< n_j+1$. Thus, combining this inequality with (\ref{equ_equiv_nv}) and the hypothesis $\frac{\theta_a(n+1)}{\theta_a(n)}\rightarrow 1$ again, we get
\begin{equation}\label{eq_decompo3}S\left(1-\frac{1}{j}\right)\sim\theta_a(h^{-1}(j)),\quad\hbox{ as }j\rightarrow +\infty.
\end{equation}
Let $0<r<1$ large enough with $1-\frac{1}{j}\leq r<1-\frac{1}{j+1}$, i.e $j\leq \frac{1}{1-r}<j+1$.  Clearly we have
\begin{equation}\label{eq_decompo4}
		S\left(1-\frac{1}{j}\right)\leq S(r)\leq S\left(1-\frac{1}{j+1}\right)\quad\hbox{ and }\quad h^{-1}(j)\leq h^{-1}\left(\frac{1}{1-r}\right)\leq h^{-1}(j+1).
\end{equation}
Let us recall that $n_j$ was chosen so that $n_j\leq h^{-1}(j)< n_j+1$. Assume that $h^{-1}(j)+1<h^{-1}(j+1)$. Therefore 
$$w_{n_j}\leq j<w_{n_j+1}\leq h(h^{-1}(j)+1) < h(h^{-1}(j+1))=j+1,$$
which gives a contradiction since $w_{n_j+1}$ is a positive integer. We deduce $h^{-1}(j+1)\leq h^{-1}(j)+1$ and by (\ref{eq_decompo4})
\begin{equation}\label{eq_decompo5}
h^{-1}(j)\leq h^{-1}\left(\frac{1}{1-r}\right)\leq h^{-1}(j)+1.
\end{equation}
Thanks to (\ref{eq_decompo3}), (\ref{eq_decompo4}) and (\ref{eq_decompo5}), we conclude
$$\sum_{n\geq 0} a_nr^{w_n}\sim (\theta_a\circ h^{-1})\left(\frac{1}{1-r}\right),\quad\hbox{ as }r\rightarrow 1^{-}.$$
\end{proof}

\subsection{The $\mathcal{U}$-frequently hypercyclic case}\label{subsectionfirstcase} We keep the definitions and the notations of Subsection \ref{subsectionconstructive}. Let us also consider an increasing function $h:\mathbb{R}_+\rightarrow \mathbb{R}_+$ tending to infinity such that, for any $n\in\mathbb{N}$, $h(n):=u_n\in\mathbb{N}$ and $u_{n+1}-u_n\rightarrow +\infty$ as $n$ tends to infinity.   
Let $\alpha$ be a real number. For all integer $n\geq 0,$ 
we set $I_{n}^{(u)}=\{2^{u_n},\ldots,2^{u_{n+1}}-1\}.$ Next, for $k\geq 1,$ we keep the definition of integers $\alpha_{k}$ and $\alpha^{*}_{k}$ given in Subsection \ref{subsectionconstructive}. We set $\displaystyle f_{\alpha}^{(u)}=\sum\limits_{n\geq 0}P_{n,\alpha}^{(u)}$ where the blocks $(P_{n,\alpha}^{(u)})$ are polynomials 
defined as follows, using Rudin-Shapiro polynomials given by Lemma \ref{lemma_rud_shap}, 
\begin{equation}\label{0poly_2inf}
P_{n,\alpha}^{(u)}(z)  =  \left\{\begin{array}{l} 
0 \hbox{ if } n \mbox{ is odd }\\
0 \hbox{ if } n\in \mathcal{A}_{k}\hbox{ and } 2^{u_{n-1}}<\alpha_{k}\\
z^{2^{u_n}}Q_{n}(z) \hbox{ if } n\in \mathcal{A}_{k}\hbox{ and }2^{u_{n-1}}\geq \alpha_{k}
\end{array}\right.
\end{equation}
with for $n\in \mathcal{A}_{k}$, 
$$Q_{n}^{(u)}(z)=\sum\limits_{j\in I_{n}^{(u)}}(j+1)^{-\alpha}c_{j-2^{u_n}}^{(k)}z^{j-2^{u_n}}$$ 
where the sequence $(c_{j}^{(k)})$ denotes the sequence of 
the coefficients of the polynomial $\displaystyle p_{\lfloor \frac{2^{u_n}}{\alpha_{k}} \rfloor}(z^{\alpha_k})\tilde{q_{k}}(z)$. 
We also set $\displaystyle f^{*(u)}_{\alpha}=\sum\limits_{n\geq 0}P^{*(u)}_{n,\alpha}$ where the blocks $(P^{*(u)}_{n,\alpha})$ are polynomials 
defined as follows, using polynomials given by Lemma \ref{lemma_rud_shap},  
\begin{equation}\label{0poly_12}
P^{*(u)}_{n,\alpha}(z) = \left\{\begin{array}{l} 
0 \hbox{ if } n \hbox{ is odd }\\
0 \hbox{ if } n\in \mathcal{A}_{k}\hbox{ and } 2^{u_{n-1}}<\alpha^{*}_{k}\\
\displaystyle z^{2^{u_n}}Q_{n}^{*(u)}(z)\hbox{ if } n\in \mathcal{A}_{k}\hbox{ and } 2^{u_{n-1}}\geq\alpha^{*}_{k},
\end{array}\right.
\end{equation}
with, for $n\in \mathcal{A}_{k}$,
$$Q_{n}^{*(u)}(z)=\sum\limits_{j\in I_{n}^{(u)}}(j+1)^{-\alpha}c_{j-2^{u_n}}^{(k)}z^{j-2^{u_n}}$$ 
where the sequence $(c_{j}^{(k)})$ denotes the sequence of 
the coefficients of the polynomial $\displaystyle p^{*}_{\lfloor \frac{2^{u_n}}{\alpha^{*}_{k}} \rfloor}(z^{\alpha^{*}_{k}})\tilde{q_{k}}(z)$.
\vskip2mm

\noindent For $1\leq p\leq\infty$, we denote by $\alpha_c$ the critical exponent $\alpha_c=\frac{1}{\max(2,q)}$.

\begin{lemma}\label{0Conv1} We have, for any $0<r<1$,
\begin{align*}M_p(P_{n,\alpha_c}^{(u)},r)\lesssim r^{2^{u_n}}&\hbox{ if } 2\leq p\leq\infty,\quad M_p(P^{*(u)}_{n,\alpha_c},r)\lesssim r^{2^{u_n}}\hbox{ if } 1< p<2,\\&\hbox{ and }\quad\quad M_1(P^{*(u)}_{n,0},r)\lesssim r^{2^{u_n}}l_k.
\end{align*}
\end{lemma}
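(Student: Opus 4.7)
The plan is to mimic Lemma \ref{Conv1} at the critical exponent, where the length $\lfloor 2^{u_n}/\alpha_k\rfloor$ of the Rudin--Shapiro polynomial inside each block is exactly matched by the decay factor $2^{-u_n\alpha_c}$. Since $r\mapsto M_p(\cdot,r)$ is increasing and each polynomial in the statement has the form $z^{2^{u_n}}Q$, monotonicity yields $M_p(P,r)\leq r^{2^{u_n}}\|Q\|_p$; the task reduces to bounding the relevant $\|Q\|_p$ by an absolute constant (or by $l_k$ in the third case).

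For $2\leq p<\infty$, I would view $Q_n^{(u)}$ as the image of $p_{\lfloor 2^{u_n}/\alpha_k\rfloor}(z^{\alpha_k})\tilde q_k(z)$ under the multiplier sequence $\phi_n(j)=(j+1)^{-\alpha_c}\mathbf{1}_{I_n^{(u)}}(j)$ and apply the Marcinkiewicz Multiplier Theorem exactly as in the proof of Lemma \ref{Conv1}. On every dyadic block $I_l$ the sequence $\phi_n$ is either zero or monotone, and telescoping (together with the boundary contributions at $l=u_n-1$ and $l=u_{n+1}-1$) gives
$$\sup_l\sup_{j\in I_l}|\phi_n(j)|\lesssim 2^{-u_n\alpha_c}\quad\text{and}\quad\sup_l\sum_{j\in I_l}|\phi_n(j+1)-\phi_n(j)|\lesssim 2^{-u_n\alpha_c}.$$
Combined with Lemma \ref{lemma_rud_shap}(1) and the bound $\|\tilde q_k\|_\infty\leq(1+d_k)^{\alpha_c}l_k$, this yields
$$\|Q_n^{(u)}\|_p\lesssim 2^{-u_n\alpha_c}\sqrt{2^{u_n}/\alpha_k}\,(1+d_k)^{\alpha_c}l_k=\frac{(1+d_k)^{\alpha_c}l_k}{\sqrt{\alpha_k}}\cdot 2^{u_n(1/2-\alpha_c)}.$$
At $\alpha_c=1/2$ the exponential factor is $1$, and the definition of $\alpha_k$ forces the prefactor to be $\lesssim 1$. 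For $p=\infty$ Marcinkiewicz no longer applies, but I would copy the fractional Bernstein induction used for the $\alpha>0$ subcase of Lemma \ref{Conv1}(i), i.e.\ the argument of Lemma 3.7 of \cite{MouMun3}, which yields the same conclusion.

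For $1<p<2$ the same scheme works verbatim on $Q_n^{*(u)}$ with $\alpha_c=1/q$, using Lemma \ref{lemma_rud_shap}(2) instead of (1): the exponential factor becomes $2^{u_n(1/q-\alpha_c)}=1$ and the leftover constant is absorbed by the definition of $\alpha_k^*$. The case $p=1$, $\alpha=0$ is easier: here $\phi_n\equiv 1$ on $I_n^{(u)}$ so no multiplier theorem is needed, and since $u_{n+1}-u_n\to\infty$ the polynomial $p^{*}_{\lfloor 2^{u_n}/\alpha_k^{*}\rfloor}(z^{\alpha_k^{*}})q_k(z)$ has degree strictly less than $2^{u_{n+1}}-2^{u_n}$ for all sufficiently large $n$, so $Q_n^{*(u)}$ coincides with it. Lemma \ref{lemma_rud_shap}(2) with $p=1$, $q=\infty$ gives $\|p^{*}_N\|_1\leq 3$, and $\|q_k\|_\infty\leq\|q_k\|_{\ell^1}\leq l_k$ then yields $\|Q_n^{*(u)}\|_1\leq 3l_k$.

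The main obstacle I expect is the $p=\infty$ case, which demands the fractional Bernstein induction of \cite{MouMun3} rather than Marcinkiewicz; the remaining delicate point is purely bookkeeping, namely checking that the precise choices of $\alpha_k$ and $\alpha_k^{*}$ absorb the $k$-dependent factor $(1+d_k)^{\alpha_c}l_k$ coming from $\tilde q_k$, which is exactly what those definitions were engineered to do.
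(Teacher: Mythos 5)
Your proposal is correct and follows essentially the same route as the paper: for $p>1$ the paper simply says to repeat the proof of Lemma \ref{Conv1} with $2^{n}$ replaced by $2^{u_n}$ (Marcinkiewicz plus Lemma \ref{lemma_rud_shap} for $p<\infty$, fractional Bernstein and the Lemma~3.7 induction of \cite{MouMun3} for $p=\infty$), and you have correctly traced why the exponent cancels exactly and why $\alpha_k,\alpha_k^*$ absorb the $(1+d_k)^{\alpha_c}l_k$ factor. For $p=1$, your observation that $\phi_n\equiv 1$ on $I_n^{(u)}$, that the degree bound makes $Q_n^{*(u)}$ equal to $p^*_{\lfloor 2^{u_n}/\alpha_k^*\rfloor}(z^{\alpha_k^*})q_k(z)$, and then $\|p^*_N\|_1\le 3$, $\|q_k\|_\infty\le l_k$, is exactly the paper's computation.
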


\begin{proof}  For $p>1$, it suffices to argue along the same lines as the proof of Lemma \ref{Conv1} replacing the sequence $(2^{n})$ by $(2^{u_n})$.\\
	Now let us consider the case $p=1$ (hence $\alpha_c=0$). We can write, keeping in mind that$q_k=\tilde{q}_k$ for $\alpha=0$,
	$$\begin{array}{rcl}M_1(P^{*(u)}_{n,0},r)&\leq&\displaystyle\frac{r^{2^{u_n}}}{2\pi}\int_0^{2\pi}\left\vert Q_n^{*(u)}(r e^{it})\right\vert dt\\&\leq&\displaystyle
	\frac{r^{2^{u_n}}}{2\pi}\int_0^{2\pi}\vert \sum_{j\in I_n^{(u)}} c_{j-2^{u_n}}^{(k)}(r e^{it})^{j-2^{u_n}}\vert dt\\&\leq&\displaystyle
	\frac{r^{2^{u_n}}}{2\pi}\int_0^{2\pi}\left\vert   p^{*}_{\lfloor \frac{2^{u_n}}{\alpha^{*}_{k}} \rfloor}((re^{it})^{\alpha^{*}_{k}})\tilde{q_{k}}(re^{it})\right\vert dt\\&\leq&\displaystyle
	r^{2^{u_n}} \Vert p^{*}_{\lfloor \frac{2^{u_n}}{\alpha^{*}_{k}} \rfloor}\Vert_1	
	\Vert q_k\Vert_{\infty}\\&\lesssim&\displaystyle r^{2^{u_n}}l_k.
	\end{array}
	$$   
\end{proof}

From Lemma \ref{keylemmaestim} and \ref{0Conv1}, we deduce the rate of growth of the functions $f_{\alpha_c}^{(u)}$ and $f^{*(u)}_{\alpha_c}.$ We begin by the case $p\ne 1$. 

\begin{lemma}\label{01estim1falpha} Let $1<p\leq\infty$. Under the preceding definitions and assumptions, the following estimates hold: for all $0<r<1$,
\begin{align*}&\quad\quad\quad M_p(f_{\alpha_c}^{(u)},r)\lesssim  h^{-1}\left(\frac{-\log(1-r)}{\log(2)}\right)\hbox{ if }2\leq p\leq\infty\\&
\hbox{ and } \quad
M_p(f_{\alpha_c}^{*(u)},r)\lesssim h^{-1}\left(\frac{-\log(1-r)}{\log(2)}\right)\hbox{ if }1< p<2.
\end{align*}
\end{lemma}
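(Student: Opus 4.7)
The plan is to bound $M_p(f_{\alpha_c}^{(u)},r)$ block by block and then to apply Lemma~\ref{keylemmaestim} to the resulting series $\sum_n r^{2^{u_n}}$. Since $M_p(\cdot,r)$ is subadditive, the triangle inequality gives
\[
M_p(f_{\alpha_c}^{(u)},r)\le\sum_{n\ge 0}M_p(P_{n,\alpha_c}^{(u)},r),
\]
and by the first estimate of Lemma~\ref{0Conv1} each nonzero block satisfies $M_p(P_{n,\alpha_c}^{(u)},r)\lesssim r^{2^{u_n}}$ for $2\le p\le\infty$. Bounding the (possibly zero) terms by the full sequence, one obtains
\[
M_p(f_{\alpha_c}^{(u)},r)\lesssim\sum_{n\ge 0}r^{2^{u_n}}.
\]

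Next, I would apply Lemma~\ref{keylemmaestim} with $w_n=2^{u_n}$ and $a_n=1$ (which is bounded, positive and satisfies $\sum a_n=+\infty$). The hypothesis $u_{n+1}-u_n\to+\infty$ forces
\[
\frac{w_{n+1}}{w_n}=2^{u_{n+1}-u_n}\longrightarrow+\infty,
\]
so the assumption of Lemma~\ref{keylemmaestim} is fulfilled. The natural choice for the interpolating function is $H(x)=2^{h(x)}$, which is continuous and strictly increasing with $H(n)=2^{u_n}=w_n$; its inverse is $H^{-1}(y)=h^{-1}(\log_2 y)$. Because $\theta_a(x)=\lfloor x\rfloor+1\sim x$ as $x\to+\infty$, Lemma~\ref{keylemmaestim} yields
\[
\sum_{n\ge 0}r^{2^{u_n}}\sim (\theta_a\circ H^{-1})\!\left(\frac{1}{1-r}\right)\sim H^{-1}\!\left(\frac{1}{1-r}\right)=h^{-1}\!\left(\frac{-\log(1-r)}{\log 2}\right)
\]
as $r\to 1^-$, giving the claimed bound for $f_{\alpha_c}^{(u)}$ in the range $2\le p\le\infty$.

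For the range $1<p<2$, the argument is identical after substituting $f_{\alpha_c}^{*(u)}$ and $P_{n,\alpha_c}^{*(u)}$ for their unstarred versions and invoking the second estimate of Lemma~\ref{0Conv1}. I do not expect any genuine obstacle here: the substantive work has already been done in Lemma~\ref{keylemmaestim} (the asymptotic comparison between a lacunary power series and the counting function $\theta_a\circ h^{-1}$) and in Lemma~\ref{0Conv1} (the block-wise $L^p$ control using Rudin--Shapiro polynomials and the Marcinkiewicz multiplier theorem at the critical exponent). The only point requiring minor care is to verify the ratio condition $w_{n+1}/w_n\to\infty$, which follows directly from the hypothesis on $(u_n)$, and to handle the vanishing blocks by a trivial domination by $\sum_{n\ge 0}r^{2^{u_n}}$.
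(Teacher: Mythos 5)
Your proof is correct and follows exactly the same route as the paper: triangle inequality plus Lemma~\ref{0Conv1} to reduce to $\sum_n r^{2^{u_n}}$, then Lemma~\ref{keylemmaestim} applied with $w_n=2^{u_n}$, $a_n=1$. You are in fact a bit more careful than the paper's terse proof in making explicit the interpolating function $H(x)=2^{h(x)}$ (so that the $h$ of Lemma~\ref{keylemmaestim} is not the same as the $h$ interpolating $(u_n)$) and in computing $H^{-1}(y)=h^{-1}(\log_2 y)$, which is precisely what turns the conclusion of Lemma~\ref{keylemmaestim} into the stated bound.
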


\begin{proof} Let $2\leq p\leq\infty$. Combining Lemma \ref{0Conv1} with triangle inequality, we get $$\displaystyle M_p(f_{\alpha_c}^{(u)},r)\lesssim \sum_{n\geq 0}r^{2^{u_n}}.$$ 
By hypothesis $2^{u_{n+1}-u_n}\rightarrow +\infty$ as $n$ tends to infinity. We apply Lemma \ref{keylemmaestim} with $w_n=2^{u_n}$ and $a_n=1$ and we obtain, for $0<r<1$, 
	$$ M_p(f_{\alpha_c}^{(u)},r)\lesssim h^{-1}\left(\frac{-\log(1-r)}{\log(2)}\right).$$
For the case $1<p<2$, the proof works along the same lines.		
\end{proof}

\noindent Now we are interested in the specific case $p=1$. 

\begin{lemma}\label{specialcasep1} 
	There is a function of the form $f_{0}^{*(u)}$ such that 
$$M_1(f_{0}^{*(u)},r)\lesssim \left(h^{-1}\left(\frac{-\log(1-r)}{\log(2)}\right)\right)^2.$$
\end{lemma}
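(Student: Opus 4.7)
The plan is to construct a specific instance of $f_0^{*(u)}$ using the scheme of Subsection \ref{subsectionfirstcase} with a carefully chosen enumeration of $\mathcal{P}$ and bounding sequence $(l_k)$. Since the statement only asserts existence, I have flexibility in these choices. I would enumerate $\mathcal{P}$ and choose $(l_k)$ (tending to infinity, with $\|q_k\|_{\ell^1}\leq l_k$) so that $l_k \leq 4^k$ for every $k$. This is feasible because one can list $\mathcal{P}$ in any order, and in particular in an order for which $\ell^1$ norms grow slowly; one may even take $l_k := \max(k, \lceil\|q_k\|_{\ell^1}\rceil)$ provided the enumeration is arranged so that $\|q_k\|_{\ell^1}$ grows at most polynomially.

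By the third estimate of Lemma \ref{0Conv1}, for every $n \in \mathcal{A}_k$ such that $P_{n,0}^{*(u)}$ is non-zero,
$$M_1(P_{n,0}^{*(u)}, r) \lesssim r^{2^{u_n}} l_k.$$
Summing via the triangle inequality yields
$$M_1(f_0^{*(u)}, r) \lesssim \sum_{n \geq 0} l_{k(n)} r^{2^{u_n}},$$
where $k(n)$ denotes the $2$-adic valuation of $n$, i.e.\ the unique index $k$ with $n \in \mathcal{A}_k$.

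To bound this sum, I set $N := h^{-1}(-\log(1-r)/\log 2)$ and split at $n = N$. For the head, using $\#\{n \leq N : n \in \mathcal{A}_k\} \leq N/2^{k+1} + 1$,
$$\sum_{n \leq N} l_{k(n)} r^{2^{u_n}} \leq \sum_{k=1}^{\lfloor \log_2 N\rfloor} l_k\!\left(\frac{N}{2^{k+1}} + 1\right) \lesssim N^2,$$
where the bound $l_k \leq 4^k$ is exactly what makes each of the two contributions of order $N^2$. For the tail, the hypothesis $u_{n+1}-u_n \to \infty$ furnishes the super-exponential decay $r^{2^{u_n}} \leq e^{-2^{u_n-u_N}}$ for $n > N$, which dominates the at-most-polynomial factor $l_{k(n)} \leq 4^{k(n)} \leq n^2$, giving $\sum_{n > N} l_{k(n)} r^{2^{u_n}} = O(1)$ by an argument parallel to the tail estimate in the proof of Lemma \ref{keylemmaestim}.

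The main obstacle is that the sequence $(l_{k(n)})_n$ is unbounded, so Lemma \ref{keylemmaestim} cannot be invoked as a black box, and the interaction between the growth of $l_k$ and the arithmetic-progression structure of $\mathcal{A}_k$ must be controlled by hand. The quadratic rate $N^2$ in the statement is precisely the natural cost of paying an $\ell^1$-norm $l_k$ in each block $\mathcal{A}_k$ and summing against the combinatorial weights $N/2^{k+1}$; a sharper rate could be obtained by slowing $(l_k)$, but the form stated is both flexible and strong enough for the forthcoming application to the critical-exponent case.
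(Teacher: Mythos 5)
Your proposal is correct, and it takes a genuinely different route from the paper. The paper proves the lemma by enlarging the parameters $\alpha^*_k$ (invoking a ``without loss of generality'' on the construction) so that whenever a block $P_{n,0}^{*(u)}$ with $n\in\mathcal{A}_k$ is non-zero one has $l_k<h^{-1}(\log_2\alpha^*_k)\leq h^{-1}(u_n)=n$; then the block estimate $M_1(P^{*(u)}_{n,0},r)\lesssim r^{2^{u_n}}l_k$ becomes $\lesssim r^{2^{u_n}}n$, and a single sum $\sum_{n\leq j+1}n\,e^{-2^{u_n-u_{j+1}}}\lesssim (j+1)^2$ together with a negligible tail gives the bound. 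You instead leave the $\alpha^*_k$ untouched and impose a growth condition $l_k\leq 4^k$ on the enumeration itself (which is indeed achievable, since every set $\{q\in\mathcal{P}:\Vert q\Vert_{\ell^1}\leq M\}$ is infinite and one can reorder so that $\Vert q_k\Vert_{\ell^1}$ grows slowly). Your head-plus-tail decomposition then exploits the dyadic structure of the $\mathcal{A}_k$ directly: $\#(\mathcal{A}_k\cap\{1,\dots,N\})\lesssim N/2^{k+1}$ paired with $l_k\leq 4^k$ yields $\sum_k l_k(N/2^{k+1}+1)\lesssim N^2$, and the tail is killed by the super-exponential decay of $r^{2^{u_n}}$. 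Both arguments land on the same $N^2$ bound; the paper's reduction to $l_k<n$ is more economical and hides the combinatorics, whereas your version makes the combinatorics of the blocks explicit and, as you note, actually has slack --- choosing $l_k\leq k$ would give $O(N\log N)$, sharper than what the lemma states. One small caveat: $N=h^{-1}(-\log(1-r)/\log 2)$ is generally not an integer, so the split should be taken at $\lfloor N\rfloor$, but this does not affect anything.
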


\begin{proof} Without loss of generality we can assume that $\alpha_k^*>1+\lfloor m_k\rfloor$ where $m_k$ is the least real number such that $g(\log(\alpha_k^*)/\log(2))>l_k$. Observe that, for all $k\geq 1$, for any $n\in\mathcal{A}_k$ with $2^{u_n-1}\geq \alpha_k^*$, we have $h^{-1}(u_n)\geq h^{-1}(u_{n-1})\geq h^{-1}(\log(\alpha_k^*)/\log(2))$. Taking into account Lemma \ref{0Conv1} and the inequality $1-t\leq e^{-t}$, we get, for any $1-\frac{1}{2^{u_j}}\leq r<1-\frac{1}{2^{u_{j+1}}},$ ($j\geq 1$) 
	
$$\begin{array}{rcl}M_1(f_{0}^{*(u)},r)&\leq&\displaystyle \sum_{n\geq 1}M_1(P^{*(u)}_{n,0},r)\\&\lesssim&
\displaystyle\sum_k\sum_{n\in\mathcal{A}_k; 2^{u_n-1}\geq \alpha_k^*}\left(1-\frac{1}{2^{u_{j+1}}}\right)^{2^{u_n}}l_k
\\&\lesssim&\displaystyle
\sum_k\sum_{n\in\mathcal{A}_k; 2^{u_n-1}\geq \alpha_k^*} e^{-2^{u_n-u_{j+1}}}l_k\frac{h^{-1}(u_{n})}{h^{-1}(\log(\alpha_k^*)/\log(2))}\\&\lesssim&\displaystyle
\sum_{n=1}^{j+1} e^{-2^{u_n-u_{j+1}}} h^{-1}(u_{n})\\&\lesssim&\displaystyle
(j+1)h^{-1}(u_{j+1})=(j+1)^2.
\end{array}$$
Since $2^{u_j}\leq \frac{1}{1-r}<2^{u_{j+1}}$, we find $j\leq h^{-1}(\frac{-\log(1-r)}{\log(2)})$ and we get
$$M_1(f_{0}^{*(u)},r)\lesssim \left(h^{-1}\left(\frac{-\log(1-r)}{\log(2)}\right)\right)^2.$$
\end{proof}

Now we are going to prove that the functions $f_{\alpha_c}^{(u)}$ and $f^{*(u)}_{\alpha_c}$ are $\mathcal{U}$-frequently hypercyclic for $T_{\alpha_c}$.

\begin{proposition}\label{0propfhcdb} For $p\geq 2$ (resp. $1\leq p<2$), the function $f_{\alpha_c}^{(u)}$ (resp. $f^{*(u)}_{\alpha_c}$) is a $\mathcal{U}$-frequently hypercyclic vector for the operator $T_{\alpha_c}$.
\end{proposition}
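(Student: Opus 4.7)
The plan is to mimic the argument of Proposition~\ref{propfhcdb}, identifying for each $q_k$ of the dense enumeration a set $T_k\subset\mathbb{N}$ of return times enjoying two properties: $\overline{d}(T_k)>0$ in the \emph{natural} sense (this is the point at which the argument differs from Proposition~\ref{propfhcdb}, where only a positive $\beta^{\gamma}$-density was produced), and $\sup_{|z|\leq 1-1/l_k}|T_{\alpha_c}^s f_{\alpha_c}^{(u)}(z)-q_k(z)|\lesssim 1/l_k$ for every $s\in T_k$ sufficiently large. The density of $(q_k)$ in $H(\mathbb{D})$ will then yield $\mathcal{U}$-frequent hypercyclicity.

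For the density step, I would introduce $\mathcal{B}_n$ as the subset of $I_n^{(u)}$ consisting of those indices $s$ for which the coefficient of $z^s$ in $z^{2^{u_n}}p_{\lfloor 2^{u_n}/\alpha_k\rfloor}(z^{\alpha_k})$ equals $+1$, and set $T_k=\bigcup\{\mathcal{B}_n:n\in\mathcal{A}_k,\ 2^{u_{n-1}}\geq\alpha_k\}$. The decisive difference with Proposition~\ref{propfhcdb} is that the Rudin-Shapiro polynomial now has length $\lfloor 2^{u_n}/\alpha_k\rfloor$ rather than $\lfloor 2^{u_n(1-\gamma)}/\alpha_k\rfloor$. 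Since at least half of its coefficients equal $+1$ by Lemma~\ref{lemma_rud_shap} and the composed polynomial $p_{\lfloor 2^{u_n}/\alpha_k\rfloor}(z^{\alpha_k})$ has degree strictly less than $2^{u_n}$, one gets $\mathcal{B}_n\subset[2^{u_n},2^{u_n+1}]$ and $\#\mathcal{B}_n\geq\tfrac{1}{2}\lfloor 2^{u_n}/\alpha_k\rfloor$. Hence
\[
\frac{\#(T_k\cap[1,2^{u_n+1}])}{2^{u_n+1}}\geq\frac{\lfloor 2^{u_n}/\alpha_k\rfloor}{2^{u_n+2}}\longrightarrow\frac{1}{4\alpha_k},
\]
which proves $\overline{d}(T_k)\geq 1/(4\alpha_k)>0$ in the natural density sense required for $\mathcal{U}$-frequent hypercyclicity.

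For the approximation step, I would imitate Lemma~3.9 of \cite{MouMun3}. Fix $s\in\mathcal{B}_n$ with $n\in\mathcal{A}_k$, $2^{u_{n-1}}\geq\alpha_k$. Decomposing $T_{\alpha_c}^s f_{\alpha_c}^{(u)}=\sum_m T_{\alpha_c}^s P_{m,\alpha_c}^{(u)}$, the terms with $m<n$ vanish since $\deg P_{m,\alpha_c}^{(u)}<2^{u_n}\leq s$; the term $m=n$, by construction of $Q_n^{(u)}$ and the choice of $s\in\mathcal{B}_n$, contributes $q_k(z)$ up to an error absorbed using the condition $\alpha_k\geq l_k^{2}(1+d_k)^{2\max(\alpha,0)}$; and the tail $m>n$ is bounded on $\{|z|\leq 1-1/l_k\}$ by Lemma~\ref{0Conv1} combined with the super-geometric decay $(1-1/l_k)^{2^{u_m}-s}\leq e^{-(2^{u_m}-2^{u_n+1})/l_k}$, summable because $u_{m+1}-u_m\to\infty$. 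Put together, this delivers $\sup_{|z|\leq 1-1/l_k}|T_{\alpha_c}^s f_{\alpha_c}^{(u)}(z)-q_k(z)|\lesssim 1/l_k$ for $k$ large enough.

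The main obstacle I expect is the bookkeeping in the tail estimate, because the blocks $P_{m,\alpha_c}^{(u)}$ that appear for $m>n$ come from every $\mathcal{A}_{k'}$ and not just $\mathcal{A}_k$, so the uniform bound on $\Vert Q_m^{(u)}\Vert_p$ must be applied carefully; the very fast growth of $(2^{u_m})$ makes this go through without genuinely new ideas. The statement for $f_{\alpha_c}^{*(u)}$ when $1\leq p<2$ is obtained by the same scheme, with $p^*_{\lfloor 2^{u_n}/\alpha_k^*\rfloor}$ replacing $p_{\lfloor 2^{u_n}/\alpha_k\rfloor}$ and $\alpha_k^*$ replacing $\alpha_k$; Lemma~\ref{lemma_rud_shap} only guarantees $\lfloor N/4\rfloor$ coefficients equal to $+1$, which yields the lower bound $\overline{d}(T_k)\geq 1/(8\alpha_k^*)>0$ and the same approximation estimate.
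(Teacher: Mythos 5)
Your proposal is correct and follows the same route as the paper: define $\mathcal{B}_n$ and $T_k$ from the $+1$ Rudin--Shapiro coefficients, show $\overline{d}(T_k)>0$, and carry out the block-by-block approximation as in Lemma 3.9 of \cite{MouMun3}, with Lemma \ref{0Conv1} handling the $m>n$ tail. Your density bound $\overline{d}(T_k)\geq 1/(4\alpha_k)$ is in fact the accurate count (the paper's displayed quotient $2^{u_n-1}/2^{u_n+1}$ overlooks the factor $\alpha_k$ coming from the spacing of the dilated Rudin--Shapiro polynomial), but since $\alpha_k$ is fixed once $k$ is fixed the conclusion $\overline{d}(T_k)>0$ is unaffected.
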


\begin{proof}
	We only prove the frequent hypercyclicity of $f_{\alpha_c}^{(u)}$ for the operator $T_{\alpha_c}$. We do not repeat the 
	details for $f^{*(u)}_{\alpha_c}$: it will be enough to make the appropriate modifications. 
	\vskip2mm
	
	Let $k$ be a large enough integer. Let us consider $n\in\mathcal{A}_{k}$ such that $2^{u_{n-1}}\geq \alpha_{k}.$ We consider $\mathcal{B}_{n}$ the set of $s$ in 
	$I_n^{(u)}$ such that the coefficient $z^{s}$ in the polynomial $z^{2^{u_n}}p_{\lfloor \frac{2^{u_n}}{\alpha_{k}} \rfloor}(z^{\alpha_{k}})$ is equal to $1$ and we denote by $\displaystyle T_{k}=\left\{s: s\in \mathcal{B}_{n},\ n\in\mathcal{A}_{k}, \  2^{u_{n-1}}\geq \alpha_{k}\right\}.$ \\
	Observe that $\max(\mathcal{B}_{n})\leq 2^{u_n+1}$ and since at least half of the coefficients of $p_{\lfloor \frac{2^{u_n}}{\alpha_{k}} \rfloor}$ being $+1$, we get
\begin{equation}\label{0denshdcons}\frac{\#\{j\leq \max(\mathcal{B}_{n});j\in T_{k}\}}{\max(\mathcal{B}_{n})}
\geq \frac{\frac{1}{2}(\frac{2^{u_n}}{\alpha_k}-2)-1}{2^{u_n+1}}\rightarrow\frac{1}{4\alpha_k},
\end{equation}
which implies
	$$\overline{d}(T_k)>0.$$
	Then let $\alpha$ be a real number and let $k$ be in $\mathbb{N}$. Now let us consider $s\in\mathcal{B}_{n}$ with $n\in\mathcal{A}_{k}$ satisfying 
	$2^{u_{n-1}}\geq\alpha_{k}.$ As in Proposition \ref{propfhcdb}, by construction we get  
	$$\sup_{\vert z\vert = 1-\frac{1}{l_{k}}} \vert T_{\alpha_c}^{s}(f_{\alpha_c}^{(u)})(z)-q_{k}(z)\vert \lesssim \frac{1}{l_{k}},$$ 
	provided that $k$ is chosen large enough. This allows to obtain the frequent hypercyclicity of $f_{\alpha_c}^{(u)}$.
\end{proof}

Combining Lemma \ref{01estim1falpha} with Proposition \ref{0propfhcdb} we obtain the following result.

\begin{theorem}\label{optimalp1ufhc} Let $1\leq p\leq \infty$ and $\alpha_c=\frac{1}{\max(2,q)}$. Then, for any function $\varphi:[0,1)\rightarrow\mathbb{R}_+$ with $\varphi(r)\rightarrow +\infty$ as $r\rightarrow 1^{-}$, there is a function $f$ in $H(\mathbb{D})$ with
	$$M_p(f,r)\lesssim \varphi(r),\quad\hbox{for }0<r<1\hbox{ sufficiently close to }1,$$
	that is $\mathcal{U}$-frequently hypercyclic for $T_{\alpha_c}$.
\end{theorem}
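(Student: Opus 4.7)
The strategy is to run the construction of Subsection \ref{subsectionfirstcase} with a carefully chosen sequence $(u_n)$. For $p\geq 2$ (resp.\ $1<p<2$), Lemma \ref{01estim1falpha} together with Proposition \ref{0propfhcdb} show that any admissible $(u_n)$ produces a $\mathcal{U}$-frequently hypercyclic function $f_{\alpha_c}^{(u)}$ (resp.\ $f_{\alpha_c}^{*(u)}$) for $T_{\alpha_c}$ whose $L^p$-growth is controlled by $h^{-1}(-\log(1-r)/\log 2)$; for $p=1$ the analogous bound from Lemma \ref{specialcasep1} is the square of this quantity. So the whole task reduces to choosing $(u_n)$ so that the right-hand side is dominated by $\varphi(r)$.

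Without loss of generality I would assume $\varphi$ continuous, strictly increasing, with $\varphi\geq 1$, and set
$$\psi(y)=\varphi(1-2^{-y})\quad\hbox{if }1<p\leq\infty,\qquad \psi(y)=\sqrt{\varphi(1-2^{-y})}\quad\hbox{if }p=1,$$
so that $\psi:\mathbb{R}_+\to\mathbb{R}_+$ is continuous, strictly increasing, and tends to infinity. The crucial identity is $\psi(-\log(1-r)/\log 2)=\varphi(r)$ (and its square in the case $p=1$), so the goal reduces to constructing $(u_n)$ such that the continuous piecewise linear extension $h$ of $n\mapsto u_n$ satisfies $h^{-1}(y)\lesssim \psi(y)$ for all sufficiently large $y$.

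For this I would take
$$u_n=\bigl\lceil \psi^{-1}(n)\bigr\rceil+n^2$$
(with an arbitrary choice for the few initial indices below $\psi(0)$). This is a strictly increasing sequence of positive integers with $u_{n+1}-u_n\geq 2n+1\to +\infty$, so it is admissible for the construction of Subsection \ref{subsectionfirstcase}. The bound $u_n\geq \psi^{-1}(n)$ gives $h^{-1}(u_n)=n\leq \psi(u_n)$ at the grid points, and piecewise-linear monotonicity propagates this to $h^{-1}(y)\leq \psi(y)+1\lesssim \psi(y)$ for $y$ large. Plugging this into Lemma \ref{01estim1falpha} (when $p>1$) or Lemma \ref{specialcasep1} (when $p=1$) yields $M_p(f,r)\lesssim \varphi(r)$, while Proposition \ref{0propfhcdb} provides the $\mathcal{U}$-frequent hypercyclicity of the resulting function for $T_{\alpha_c}$.

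The main (but rather mild) obstacle is the case $p=1$, where Lemma \ref{specialcasep1} gives only the squared estimate; this is absorbed by replacing $\varphi$ with $\sqrt{\varphi}$, which still tends to infinity. The other verifications -- admissibility of $(u_n)$, continuity and monotonicity of $h$, and the passage from the grid-point inequality to an inequality valid on all of $[\psi(0),\infty)$ -- are routine bookkeeping around the machinery already set up in the paper.
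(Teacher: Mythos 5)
Your proof is correct and follows essentially the same strategy as the paper's: reduce to choosing the sequence $(u_n)$, then invoke Lemma \ref{01estim1falpha} (or Lemma \ref{specialcasep1} for $p=1$) together with Proposition \ref{0propfhcdb}, and compare $h^{-1}$ with a monotone rescaling of $\varphi$. The paper packages the sequence construction as a ``without loss of generality'' reduction assuming $\varphi(r)=\psi\bigl(\tfrac{1}{1-r}\bigr)$ with $\psi^{-1}(n)\in\mathbb{N}$ and $u_{n+1}-u_n\to\infty$, while you make the same construction explicit via $u_n=\lceil\psi^{-1}(n)\rceil+n^2$ with $\psi(y)=\varphi(1-2^{-y})$; these are interchangeable pieces of bookkeeping, and your $p=1$ handling via $\sqrt{\varphi}$ matches the paper as well.
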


\begin{proof} We begin by the case $p>1$. Without loss of generality, we can assume that the function $\varphi$ is a continuous increasing function that can be written, for all $0<r<1$, $\varphi(r)=\psi \left(\frac{1}{1-r}\right)$ where $\psi$ is a continuous  increasing function with, for all $n\in\mathbb{N}$, $u_n:=\psi^{-1}(n)\in \mathbb{N}$ and $u_{n+1}-u_{n}\rightarrow +\infty$. Thus Lemma \ref{01estim1falpha} and Proposition \ref{0propfhcdb} ensure that, for all $1<p\leq\infty$, there is a function $f$ in $H(\mathbb{D})$ with
$$M_p(f,r)\lesssim \psi \left(\frac{-\log(1-r)}{\log(2)}\right)\lesssim \psi\left(\frac{1}{1-r}\right)=\varphi(r)$$	
that is $\mathcal{U}$-frequently hypercyclic for $T_{\alpha_c}$.\\

Now we deal with the case $p=1$. Without loss of generality, we can assume that $\varphi$ is a continuous increasing function that can be written, for all $0<r<1$, $\varphi (r)=\left(\psi \left(\frac{1}{1-r}\right)\right)^2$ where $\psi$ is a continuous and increasing function such that, for all $n\in\mathbb{N}$, $u_n:=\psi^{-1}(n)\in \mathbb{N}$ and $u_{n+1}-u_{n}\rightarrow +\infty$. Applying Lemma \ref{specialcasep1} and Proposition \ref{0propfhcdb}, we find a function $f\in H(\mathbb{D})$ with 
$$M_1(f,r)\lesssim \left(\psi \left(\frac{-\log(1-r)}{\log(2)}\right)\right)^2\lesssim \left(\psi\left(\frac{1}{1-r}\right)\right)^2=\varphi(r)$$
that is $\mathcal{U}$-frequently hypercyclic for $T_{0}$.\\	
The proof is complete.
\end{proof}

\subsection{The $\mathcal{U}_{\beta^{\gamma}}$-frequently hypercyclic case} We keep the definitions and the notations of Subsection \ref{subsectionfirstcase}. We modify the definitions of polynomials $P_{n,\alpha}^{(u)}$ and $P_{n,\alpha}^{*(u)}$ as follows:
\begin{equation}\label{2poly_2inf}
P_{n,\alpha}^{(u)}(z)  =  \left\{\begin{array}{l} 
0 \hbox{ if } n \mbox{ is odd }\\
0 \hbox{ if } n\in \mathcal{A}_{k}\hbox{ and } 2^{u_{n-1}}<\alpha_{k}\\
z^{2^{u_n}}Q_{n}(z) \hbox{ if } n\in \mathcal{A}_{k}\hbox{ and }2^{u_{n-1}}\geq \alpha_{k}
\end{array}\right.
\end{equation}
with for $n\in \mathcal{A}_{k}$, 
$$Q_{n}^{(u)}(z)=\sum\limits_{j\in I_{n}^{(u)}}(j+1)^{-\alpha}c_{j-2^{u_n}}^{(k)}z^{j-2^{u_n}}$$ 
where the sequence $(c_{j}^{(k)})$ denotes the sequence of 
the coefficients of the polynomial 
$ \displaystyle p_{\lfloor \frac{2^{u_n(1-\gamma)}}{\alpha_{k}} \rfloor}(z^{\alpha_k})\tilde{q_{k}}(z)$. 
 
\begin{equation}\label{2poly_12}
P^{*(u)}_{n,\alpha}(z) = \left\{\begin{array}{l} 
0 \hbox{ if } n \hbox{ is odd }\\
0 \hbox{ if } n\in \mathcal{A}_{k}\hbox{ and } 2^{u_{n-1}}<\alpha^{*}_{k}\\
\displaystyle z^{2^{u_n}}Q_{n}^{*(u)}(z)\hbox{ if } n\in \mathcal{A}_{k}\hbox{ and } 2^{u_{n-1}}\geq\alpha^{*}_{k},
\end{array}\right.
\end{equation}
with, for $n\in \mathcal{A}_{k}$,
$$Q_{n}^{*(u)}(z)=\sum\limits_{j\in I_{n}^{(u)}}(j+1)^{-\alpha}c_{j-2^{u_n}}^{(k)}z^{j-2^{u_n}}$$ 
where the sequence $(c_{j}^{(k)})$ denotes the sequence of 
the coefficients of the polynomial $\displaystyle p^{*}_{\lfloor \frac{2^{u_n(1-\gamma)}}{\alpha^{*}_{k}} \rfloor}(z^{\alpha^{*}_{k}})\tilde{q_{k}}(z)$.
\vskip2mm

Let $1<p\leq\infty$. Set $\alpha_c=\frac{1-\gamma}{\max(2,q)}$.

\begin{lemma}\label{2Conv1} We have, for any $0<r<1$ and all $n\in\mathbb{N}$,
	$$M_p(P_{n,\alpha_c}^{(u)},r)\lesssim r^{2^{u_n}}\hbox{ if }2\leq p\leq\infty\quad\hbox{ and }\quad
	M_p(P_{n,\alpha_c}^{*(u)},r)\lesssim r^{2^{u_n}}\hbox{ if }1< p<2.$$
\end{lemma}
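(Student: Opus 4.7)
The plan is to repeat the argument of Lemma \ref{Conv1} (equivalently its variant Lemma \ref{0Conv1}), performing two bookkeeping changes: replace the block index $2^{n}$ by $2^{u_{n}}$ throughout, and keep track of the new factor $2^{u_{n}(1-\gamma)}$ coming from the lower degree of the Rudin-Shapiro (resp.\ de la Vall\'ee-Poussin) polynomial used in the definitions \eqref{2poly_2inf} and \eqref{2poly_12}. The choice of critical exponent $\alpha_{c}=\frac{1-\gamma}{\max(2,q)}$ will make the resulting power of $2^{u_{n}}$ cancel exactly, leaving the uniform bound $r^{2^{u_{n}}}$.

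Concretely, for $2\leq p<\infty$ I would write $M_{p}(P_{n,\alpha_{c}}^{(u)},r)\leq r^{2^{u_{n}}}\|Q_{n}^{(u)}\|_{p}$ and view the map $(c_{j}^{(k)})\mapsto\bigl(\phi_{n}(j)\,c_{j-2^{u_{n}}}^{(k)}\bigr)$ as an abstract convolution operator on $\mathbb{T}$ whose symbol $\phi_{n}$ is supported in $I_{n}^{(u)}=[2^{u_{n}},2^{u_{n+1}})$. On each dyadic block the symbol $j\mapsto (j+1)^{-\alpha_{c}}$ is monotone, so its $\ell^{\infty}$ and BV norms on $I_{l}$ are both $\lesssim 2^{-l\alpha_{c}}$; the supremum over $l\in\{u_{n},\dots,u_{n+1}-1\}$ is attained at $l=u_{n}$ and is $\lesssim 2^{-u_{n}\alpha_{c}}$. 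The Marcinkiewicz Multiplier Theorem and Lemma \ref{lemma_rud_shap} then yield
\[
\|Q_{n}^{(u)}\|_{p}\;\lesssim\; 2^{-u_{n}\alpha_{c}}\Bigl\|p_{\lfloor 2^{u_{n}(1-\gamma)}/\alpha_{k}\rfloor}\Bigr\|_{p}\|\tilde{q}_{k}\|_{\infty}\;\lesssim\; 2^{-u_{n}\alpha_{c}}\cdot 2^{u_{n}(1-\gamma)/2}\;=\;1,
\]
using precisely that $\alpha_{c}=\frac{1-\gamma}{2}$. The case $p=\infty$ is handled by the same fractional Bernstein step as in Lemma \ref{Conv1}, and for $1<p<2$ one substitutes the de la Vall\'ee-Poussin estimate $\|p^{*}_{N}\|_{p}\lesssim N^{1/q}$ of Lemma \ref{lemma_rud_shap}(2), which gives $\|Q_{n}^{*(u)}\|_{p}\lesssim 2^{-u_{n}\alpha_{c}}\cdot 2^{u_{n}(1-\gamma)/q}=1$ for $\alpha_{c}=\frac{1-\gamma}{q}$.

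There is essentially no new obstacle: the only subtlety is that the symbol $\phi_{n}$ here is spread over several dyadic blocks (namely $I_{u_{n}},\dots,I_{u_{n+1}-1}$) rather than a single one as in Lemma \ref{Conv1}, so one must check that the supremum over $l$ of the $\ell^{\infty}$ and BV norms is still of the expected order; monotonicity of $(j+1)^{-\alpha_{c}}$ makes this immediate. Once these estimates are in place, the two lines $M_{p}(P_{n,\alpha_{c}}^{(u)},r)\lesssim r^{2^{u_{n}}}$ (for $p\geq 2$) and $M_{p}(P_{n,\alpha_{c}}^{*(u)},r)\lesssim r^{2^{u_{n}}}$ (for $1<p<2$) follow at once from $M_{p}(P,r)\leq r^{2^{u_{n}}}\|Q\|_{p}$ and the fact that the $L^{p}$ norm bound on $Q$ is uniform in $n$.
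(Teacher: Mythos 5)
Your proof is correct and follows exactly the route the paper takes: the paper's own proof of Lemma \ref{2Conv1} is literally a one-line instruction to run the argument of Lemma \ref{Conv1} with $2^{n}$ replaced by $2^{u_n}$. You carry out that substitution faithfully, tracking the factor $2^{u_n(1-\gamma)}$ from the Rudin--Shapiro (resp.\ de la Vall\'ee-Poussin) degree and observing that the choice $\alpha_c=\frac{1-\gamma}{\max(2,q)}$ makes the resulting exponent vanish, so that $\|Q_n^{(u)}\|_p$ (resp.\ $\|Q_n^{*(u)}\|_p$) is $\lesssim 1$ uniformly in $n$ (the ratio $\|\tilde q_k\|_\infty/\sqrt{\alpha_k}$, resp.\ $\|\tilde q_k\|_\infty/\alpha_k^{*1/q}$, being $\lesssim 1$ by the choice of $\alpha_k,\alpha_k^*$, exactly as in Lemma \ref{Conv1}). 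One thing you add that the paper leaves implicit is the verification that the Marcinkiewicz multiplier constant is still controlled when the symbol $\phi_n$ is supported on the longer interval $I_n^{(u)}=[2^{u_n},2^{u_{n+1}})$ spanning several dyadic blocks, rather than on a single dyadic block as in Lemma \ref{Conv1}; since $\alpha_c>0$ the symbol $(j+1)^{-\alpha_c}$ is decreasing and monotone on each block, so both the $\ell^\infty$ and total-variation bounds on $I_l$ are $\lesssim 2^{-l\alpha_c}$, and the supremum over $l\ge u_n-1$ (including the boundary jump at $2^{u_n}$) is $\lesssim 2^{-u_n\alpha_c}$. Making that check explicit is a useful clarification, since it is the only place where the block structure genuinely changes. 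For $p=\infty$ and $\alpha_c>0$ one should also invoke the induction step of Lemma 3.7 of \cite{MouMun3} as in the original Conv1 proof (the fractional Bernstein step alone treats $\alpha\le 0$), but this is the same delegation the paper makes.
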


\begin{proof}  It suffices to argue along the same lines as the proof of Lemma \ref{Conv1} replacing the sequence $(2^{n})$ by $(2^{u_n})$. 
\end{proof}

From Lemma \ref{keylemmaestim} and \ref{2Conv1}, we deduce the rate of growth of the functions $f_{\alpha_c}^{(u)}$ and $f^{*(u)}_{\alpha_c}.$ 

\begin{lemma}\label{21estim1falpha} Let $1<p\leq\infty$ and $\alpha_c=\frac{1-\gamma}{\max(2,q)}$. Then, for all $0<r<1$, 
$$M_p(f_{\alpha_c}^{(u)},r)\lesssim h^{-1}\left(\frac{-\log(1-r)}{\log(2)}\right)\hbox{ if }2\leq p\leq\infty,$$
and 
$$M_p(f_{\alpha_c}^{*(u)},r)\lesssim h^{-1}\left(\frac{-\log(1-r)}{\log(2)}\right)\hbox{ if }1< p<2.$$
\end{lemma}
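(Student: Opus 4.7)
The proof strategy mirrors exactly that of Lemma \ref{01estim1falpha}, since the only change in the construction of the blocks relative to Subsection \ref{subsectionfirstcase} is in the inner Rudin--Shapiro factor (now of length $\lfloor 2^{u_n(1-\gamma)}/\alpha_k \rfloor$ instead of $\lfloor 2^{u_n}/\alpha_k \rfloor$), and Lemma \ref{2Conv1} already absorbs this change in its conclusion. The plan is therefore to combine the per-block $L^p$ bound of Lemma \ref{2Conv1} with the triangle inequality, and then to read off the global growth from Lemma \ref{keylemmaestim}.

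First, I would fix $2 \leq p \leq \infty$ and apply Lemma \ref{2Conv1} to each block to obtain, via the triangle inequality,
$$M_p(f_{\alpha_c}^{(u)}, r) \;\leq\; \sum_{n \geq 0} M_p(P_{n,\alpha_c}^{(u)}, r) \;\lesssim\; \sum_{n \geq 0} r^{2^{u_n}}.$$
Next, I would introduce the continuous increasing function $H : \mathbb{R}_+ \to \mathbb{R}_+$ defined by $H(x) = 2^{h(x)}$, so that $H(n) = 2^{u_n}$ for all $n \in \mathbb{N}$ and $H^{-1}(y) = h^{-1}\!\left(\log(y)/\log(2)\right)$. The defining assumption $u_{n+1} - u_n \to +\infty$ translates immediately into $2^{u_{n+1}}/2^{u_n} \to +\infty$, so the hypotheses of Lemma \ref{keylemmaestim} are satisfied with $w_n = 2^{u_n}$ and $a_n = 1$. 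With these choices $\theta_a(x) = \lfloor x \rfloor + 1 \sim x$ as $x \to +\infty$, hence
$$\sum_{n \geq 0} r^{2^{u_n}} \;\sim\; (\theta_a \circ H^{-1})\!\left(\frac{1}{1-r}\right) \;\sim\; h^{-1}\!\left(\frac{-\log(1-r)}{\log(2)}\right) \quad \text{as } r \to 1^-,$$
which is the announced estimate.

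The treatment of the range $1 < p < 2$ is identical, starting from the $L^p$-bound $M_p(P_{n,\alpha_c}^{*(u)}, r) \lesssim r^{2^{u_n}}$ given by the second half of Lemma \ref{2Conv1}; the triangle inequality followed by the same application of Lemma \ref{keylemmaestim} yields the stated control of $M_p(f_{\alpha_c}^{*(u)}, r)$. There is no real obstacle here: the only thing to verify carefully is that the auxiliary function $H(x) = 2^{h(x)}$ meets the regularity requirements of Lemma \ref{keylemmaestim} (continuity and strict monotonicity, both inherited from $h$) and that $\theta_a(x) \sim x$ for $a_n \equiv 1$, so that the asymptotic $(\theta_a \circ H^{-1})(1/(1-r))$ simplifies to $h^{-1}(-\log(1-r)/\log(2))$ up to a multiplicative constant absorbed in the symbol $\lesssim$.
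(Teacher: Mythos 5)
Your proof is correct and takes essentially the same route as the paper: the paper states Lemma~\ref{21estim1falpha} by simply citing Lemmas~\ref{keylemmaestim} and~\ref{2Conv1}, with the understanding that one repeats the proof of Lemma~\ref{01estim1falpha}, which is exactly the argument you write out (triangle inequality on the block estimate, then Lemma~\ref{keylemmaestim} with $w_n = 2^{u_n}$, $a_n = 1$). Your explicit introduction of $H(x)=2^{h(x)}$ to reconcile the two uses of the symbol $h$, and the observation that $\theta_a(x)\sim x$, merely spell out details the paper leaves implicit.
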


Now we are going to prove that the functions $f_{\alpha_c}^{(u)}$ and $f^{*(u)}_{\alpha_c}$ are $U_{\beta^{\gamma}}$-frequently hypercyclic for $T_{\alpha_c}$.

\begin{proposition}\label{2propfhcdb} For $p\geq 2$ (resp. $1<p<2$), the function $f_{\alpha_c}^{(u)}$ (resp. $f^{*(u)}_{\alpha_c}$) 
	is a $U_{\beta^{\gamma}}$-frequently hypercyclic vector for the operator $T_{\alpha_c}$.
\end{proposition}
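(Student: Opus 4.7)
The plan is to mirror the proof of Proposition \ref{propfhcdb} step by step, replacing every occurrence of the sequence $(2^n)$ by $(2^{u_n})$ and every length $2^{n(1-\gamma)}$ by $2^{u_n(1-\gamma)}$. As in that proof, I would restrict attention to $f_{\alpha_c}^{(u)}$ (the $p\geq 2$ case); the adaptation to $f^{*(u)}_{\alpha_c}$ in the range $1<p<2$ is routine. Fix a large integer $k$ and, for each $n\in\mathcal{A}_{k}$ with $2^{u_{n-1}}\geq\alpha_{k}$, let $\mathcal{B}_{n}$ denote the set of $s\in I_{n}^{(u)}$ for which the coefficient of $z^{s}$ in $z^{2^{u_{n}}}p_{\lfloor 2^{u_{n}(1-\gamma)}/\alpha_{k}\rfloor}(z^{\alpha_{k}})$ equals $+1$. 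Set $T_{k}=\{s:s\in\mathcal{B}_{n},\ n\in\mathcal{A}_{k},\ 2^{u_{n-1}}\geq\alpha_{k}\}$.

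The first step is the density lower bound $\overline{d}_{\beta^{\gamma}}(T_{k})>0$. Lemma \ref{lemma_rud_shap} says at least half of the coefficients of $p_{\lfloor 2^{u_{n}(1-\gamma)}/\alpha_{k}\rfloor}$ are $+1$, and by construction $\max(\mathcal{B}_{n})\leq 2^{u_{n}}+\lfloor 2^{u_{n}(1-\gamma)}\rfloor$, so the same algebraic manipulation as in the proof of Proposition \ref{propfhcdb} reduces the question to the two limits
\begin{align*}
\frac{\sum_{j=1}^{2^{u_{n}}+2^{-1}\lfloor 2^{u_{n}(1-\gamma)}\rfloor}e^{j^{\gamma}}}{\sum_{j=1}^{2^{u_{n}}+\lfloor 2^{u_{n}(1-\gamma)}\rfloor}e^{j^{\gamma}}}&\longrightarrow e^{-\gamma/2},\\
\frac{\sum_{j=1}^{2^{u_{n}}-1}e^{j^{\gamma}}}{\sum_{j=1}^{2^{u_{n}}+\lfloor 2^{u_{n}(1-\gamma)}\rfloor}e^{j^{\gamma}}}&\longrightarrow e^{-\gamma}.
\end{align*}
These are established exactly as in Lemma \ref{diffdensbeta}, since what matters in the asymptotic analysis is only that $2^{u_{n}}\to\infty$, which is automatic because $u_{n+1}-u_{n}\to\infty$. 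The resulting positive difference $e^{-\gamma/2}-e^{-\gamma}>0$ yields $\overline{d}_{\beta^{\gamma}}(T_{k})>0$.

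The second step is the approximation bound: for every $s\in\mathcal{B}_{n}$ with $n\in\mathcal{A}_{k}$, $2^{u_{n-1}}\geq\alpha_{k}$,
\[
\sup_{|z|=1-1/l_{k}}\bigl|T_{\alpha_{c}}^{s}(f_{\alpha_{c}}^{(u)})(z)-q_{k}(z)\bigr|\lesssim\frac{1}{l_{k}},
\]
provided $k$ is large enough. Combined with the density of $\{q_{k}\}_{k\geq 1}$ in $H(\mathbb{D})$, this yields $\mathcal{U}_{\beta^{\gamma}}$-frequent hypercyclicity of $f_{\alpha_{c}}^{(u)}$. The argument is the one already used in Proposition \ref{propfhcdb} (itself modeled on \cite[Lemma 3.9]{MouMun3}): for the block $P_{n,\alpha_{c}}^{(u)}$ itself the weight $(j+1)^{-\alpha_{c}}$ cancels against the weights $w_{1}(\alpha_{c})\dots w_{s}(\alpha_{c})$ produced by $T_{\alpha_{c}}^{s}$, and by construction of the $+1$ coefficient at position $s$ one recovers exactly $q_{k}$ to leading order, the remainder being absorbed into a factor of order $1/l_{k}$ thanks to the choice of $\alpha_{k}$; the earlier blocks $P_{m,\alpha_{c}}^{(u)}$ with $m<n$ are annihilated or negligible, while the later blocks $P_{m,\alpha_{c}}^{(u)}$ with $m>n$ contribute a tail controlled on $|z|=1-1/l_{k}$ by $\sum_{m>n}(1-1/l_{k})^{2^{u_{m}}-s}$, which decays super-exponentially because $u_{m+1}-u_{m}\to\infty$ forces the exponents $2^{u_{m}}$ to be super-lacunary.

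The only place requiring genuine care, and thus the main obstacle, is the tail estimate for the blocks of index $m>n$: one must verify that the lacunarity $u_{m+1}-u_{m}\to\infty$ is sufficient to kill the sum $\sum_{m>n}\Vert T_{\alpha_{c}}^{s}(P_{m,\alpha_{c}}^{(u)})\Vert_{\infty,|z|=1-1/l_{k}}$ uniformly in $s\in\mathcal{B}_{n}$. This is because $\Vert T_{\alpha_{c}}^{s}(P_{m,\alpha_{c}}^{(u)})\Vert_{\infty}$ inherits the polynomial-in-$2^{u_{m}}$ growth permitted by Lemma \ref{2Conv1}, and one needs this to be outweighed by the radial decay $(1-1/l_{k})^{2^{u_{m}}-s}$; the hypothesis $u_{m+1}-u_{m}\to\infty$ supplies precisely this comparison, and everything else reduces to the computations already carried out in \cite{MouMun3} and Proposition \ref{propfhcdb}.
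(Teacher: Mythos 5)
Your proposal is correct and follows essentially the same route as the paper's own proof: you define $\mathcal{B}_{n}$ and $T_{k}$ identically, obtain the density lower bound by computing the same two asymptotic limits (whose difference $e^{-\gamma/2}-e^{-\gamma}=e^{-\gamma/2}(1-e^{-\gamma/2})>0$ is exactly what the paper records), and for the approximation estimate you invoke the argument of Proposition \ref{propfhcdb} and \cite[Lemma 3.9]{MouMun3} with the sequence $(2^{u_n})$ in place of $(2^n)$, just as the paper does. The extra discussion you give of the tail estimate — noting that $T_{\alpha_c}^s$ annihilates the blocks with $m<n$, and that for $m>n$ the super-lacunarity $u_{m+1}-u_m\to\infty$ makes the radial decay $(1-1/l_k)^{2^{u_m}-s}$ dominate — is a faithful sketch of what the cited earlier arguments actually do, even though the paper itself leaves this implicit by reference; it does not change the approach.
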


\begin{proof}
	We only prove the frequent hypercyclicity of $f_{\alpha_c}^{(u)}$ for the operator $T_{\alpha_c}$. We do not repeat the 
	details for $f^{*(u)}_{\alpha_c}$: it will be enough to make the appropriate modifications. 
	\vskip2mm
	
	Let $k$ be a large enough integer. Let us consider $n\in\mathcal{A}_{k}$ such that $2^{u_{n-1}}\geq \alpha_{k}.$ We consider $\mathcal{B}_{n}$ the set of $s$ in 
	$I_n^{(u)}$ such that the coefficient $z^{s}$ in the polynomial $z^{2^{u_n}}p_{\lfloor \frac{2^{u_n(1-\gamma)}}{\alpha_{k}} \rfloor}(z^{\alpha_{k}})$ is equal to $1$ and we denote by $\displaystyle T_{k}=\left\{s: s\in \mathcal{B}_{n},\ n\in\mathcal{A}_{k}, \  2^{u_{n-1}}\geq \alpha_{k}\right\}.$ \\
	Observe that $\max(\mathcal{B}_{n})\leq 2^{u_n}+\lfloor2^{u_n(1-\gamma)}\rfloor$ and since at least half of the coefficients of $p_{\lfloor \frac{2^{u_n(1-\gamma)}}{\alpha_{k}} \rfloor}$ being $+1$, we get
%

\begin{equation}\label{2denshdcons}\frac{\sum\limits_{j\leq \max(\mathcal{B}_{n});\atop j\in T_{k}}e^{j^{\gamma}}}{\sum\limits_{j\leq \max(\mathcal{B}_{n})}e^{j^{\gamma}}}\geq 
\frac{\sum\limits_{j=2^{u_n}}^{2^{u_n}+\lfloor2^{-1}\lfloor\frac{2^{u_n(1-\gamma)}}{\alpha_k}-1\rfloor\rfloor}e^{j^{\gamma}}}{\sum\limits_{j=1}^{2^{u_n}+\lfloor2^{u_n(1-\gamma)}\rfloor}e^{j^{\gamma}}}
=\frac{\sum\limits_{j=1}^{2^{u_n}+\lfloor2^{-1}\lfloor\frac{2^{u_n(1-\gamma)}}{\alpha_k}-1\rfloor\rfloor}e^{j^{\gamma}}}{\sum\limits_{j=1}^{2^{u_n}+\lfloor2^{u_n(1-\gamma)}\rfloor}e^{j^{\gamma}}}-
\frac{\sum\limits_{j=1}^{2^{u_n}-1}e^{j^{\gamma}}}{\sum\limits_{j=1}^{2^{u_n}+\lfloor2^{u_n(1-\gamma)}\rfloor}e^{j^{\gamma}}}.
\end{equation}

	Clearly we have
\begin{align*}&\left(2^{u_n}+\lfloor2^{-1}\lfloor\alpha_k^{-1}2^{u_n(1-\gamma)}-1\rfloor\rfloor\right)^{\gamma}-\left(2^{u_n}+\lfloor2^{u_n(1-\gamma)}\rfloor\right)^{\gamma}\rightarrow -\gamma(1-\frac{1}{2\alpha_k}),\\
	&\left(2^{u_n}-1\right)^{\gamma}-\left(2^{u_n}+\lfloor2^{u_n(1-\gamma)}\rfloor\right)^{\gamma}\rightarrow -\gamma,
\end{align*}
which implies, using similar estimations as those of the proof of Lemma \ref{diffdensbeta}, 
$$\frac{\sum\limits_{j=1}^{2^{u_n}+\lfloor 2^{-1}\lfloor\frac{2^{u_n(1-\gamma)}}{\alpha_k}-1\rfloor\rfloor}e^{j^{\gamma}}}{\sum\limits_{j=1}^{2^{u_n}+\lfloor2^{u_n(1-\gamma)}\rfloor}e^{j^{\gamma}}}-
\frac{\sum\limits_{j=1}^{2^{u_n}-1}e^{j^{\gamma}}}{\sum\limits_{j=1}^{2^{u_n}+\lfloor2^{u_n(1-\gamma)}\rfloor}e^{j^{\gamma}}}\rightarrow e^{-\gamma}(e^{1/2\alpha_k}-1), \hbox{ as }n\rightarrow +\infty.$$
%
	Hence the inequality (\ref{2denshdcons}) ensures that 
	$$\overline{d}_{\beta^{\gamma}}(T_k)>0.$$
	Then let $\alpha$ be a real number and let $k$ be in $\mathbb{N}$. Now for  $s\in\mathcal{B}_{n}$ with $n\in\mathcal{A}_{k}$ satisfying 
	$2^{u_{n-1}}\geq\alpha_{k}$, as in Proposition \ref{propfhcdb}, by construction we get  
	$$\sup_{\vert z\vert = 1-\frac{1}{l_{k}}} \vert T_{\alpha_c}^{s}(f_{\alpha_c}^{(u)})(z)-q_{k}(z)\vert \lesssim \frac{1}{l_{k}},$$ 
	provided that $k$ is chosen large enough. This allows to obtain the frequent hypercyclicity of $f_{\alpha_c}^{(u)}$.

\end{proof}

Combining Lemma \ref{21estim1falpha} with Proposition \ref{2propfhcdb} we obtain the following result.

\begin{theorem}\label{optimalubetagamma} Let $0<\gamma<1$. Let $1\leq p\leq \infty$ and $\alpha_c=\frac{1-\gamma}{\max(2,q)}$. Then, for any function $\varphi:[0,1)\rightarrow\mathbb{R}_+$, with $\varphi(r)\rightarrow +\infty$ as $r\rightarrow 1^{-}$, there is a function $f$ in $H(\mathbb{D})$ with
	$$M_p(f,r)\lesssim \varphi(r),\quad\hbox{ for }0<r<1\hbox{ sufficiently close to }1,$$
	that is $\mathcal{U}_{\beta^{\gamma}}$-frequently hypercyclic for $T_{\alpha_c}$.
\end{theorem}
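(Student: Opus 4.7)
The plan is to handle the cases $p=1$ and $1<p\leq\infty$ separately, relying in each on machinery already set up in the excerpt.

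For $p=1$ one has $q=\infty$, so $\alpha_c=(1-\gamma)/\max(2,q)=0$, which is exactly the critical exponent treated for $\mathcal{U}$-frequent hypercyclicity in Subsection \ref{subsectionfirstcase}. As recalled in Subsection \ref{subsectiondens}, $\overline{d}(E)\leq \overline{d}_{\beta^{\gamma}}(E)$ for every $E\subset\mathbb{N}$, so every $\mathcal{U}$-frequently hypercyclic vector for $T_0$ is automatically $\mathcal{U}_{\beta^{\gamma}}$-frequently hypercyclic for $T_0$. Consequently, Theorem \ref{optimalp1ufhc} applied with $p=1$ and $\alpha_c=0$ directly produces a function $f\in H(\mathbb{D})$ with $M_1(f,r)\lesssim\varphi(r)$ near $r=1$ that is $\mathcal{U}_{\beta^{\gamma}}$-frequently hypercyclic for $T_{\alpha_c}$, and no further work is needed.

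For $1<p\leq\infty$ I would imitate the opening reduction performed in the proof of Theorem \ref{optimalp1ufhc}. Given $\varphi$ with $\varphi(r)\to+\infty$, replace it by a smaller continuous increasing function of the form $\varphi(r)=\psi(1/(1-r))$, where $\psi:\mathbb{R}_+\to\mathbb{R}_+$ is continuous, strictly increasing, tends to infinity, and satisfies $u_n:=\psi^{-1}(n)\in\mathbb{N}$ for every $n\in\mathbb{N}$ together with $u_{n+1}-u_n\to+\infty$. Since $\varphi$ diverges, one may choose $\psi$ to grow as slowly as desired, so such a minorant exists. Let $h$ be any continuous increasing extension of $n\mapsto u_n$, so that $h^{-1}=\psi$.

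With this choice of $(u_n)$ fixed, I would feed it into the construction of the present subsection: form $f_{\alpha_c}^{(u)}$ when $2\leq p\leq\infty$ and $f_{\alpha_c}^{*(u)}$ when $1<p<2$. Proposition \ref{2propfhcdb} ensures that the resulting function is $\mathcal{U}_{\beta^{\gamma}}$-frequently hypercyclic for $T_{\alpha_c}$, and Lemma \ref{21estim1falpha} yields, for $r$ close enough to $1$,
$$M_p(f,r)\;\lesssim\; h^{-1}\!\left(\tfrac{-\log(1-r)}{\log 2}\right)\;=\;\psi\!\left(\tfrac{-\log(1-r)}{\log 2}\right)\;\leq\;\psi\!\left(\tfrac{1}{1-r}\right)\;=\;\varphi(r),$$
the penultimate inequality using that $-\log(1-r)/\log 2\leq 1/(1-r)$ near $r=1$ and that $\psi$ is increasing.

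The only genuine point of care is the opening regularisation: one must confirm that every $\varphi$ diverging at $1$ can be minorised by a function of the prescribed shape (with an integer-valued inverse whose consecutive values grow apart). This is routine — entirely parallel to the step performed in Theorem \ref{optimalp1ufhc} — but it is where the flexibility of the ``$\varphi$ arbitrarily slowly divergent'' hypothesis is actually used. Once the reduction is in place, everything else is bookkeeping: the growth bound of Lemma \ref{21estim1falpha} has been engineered so that its right-hand side matches $\varphi$, and the $\mathcal{U}_{\beta^{\gamma}}$-frequent hypercyclicity of the construction is unaffected by the choice of the sequence $(u_n)$ as handled by Proposition \ref{2propfhcdb}.
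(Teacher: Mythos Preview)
Your proposal is correct and follows essentially the same route as the paper's own proof: for $p=1$ you reduce to Theorem~\ref{optimalp1ufhc} via $\alpha_c=0$ and the density inequality $\overline{d}\leq\overline{d}_{\beta^{\gamma}}$, and for $p>1$ you perform the same regularisation of $\varphi$ into $\psi(1/(1-r))$ and then invoke Lemma~\ref{21estim1falpha} together with Proposition~\ref{2propfhcdb}. Your version is slightly more explicit in justifying the intermediate inequalities, but the argument is identical in substance.
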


\begin{proof} For $p=1$, we have $\alpha_c=0$ and the result is given by Theorem \ref{optimalp1ufhc}. Now let $p>1$. Without loss of generality, we can assume that the $\varphi$ is a continuous increasing function such that, for all $0<r<1$, $\varphi(r)=\psi \left(\frac{1}{1-r}\right)$ where $\psi$ is continuous and increasing with, for all $n\in\mathbb{N}$, $u_n:=\psi^{-1}(n)\in \mathbb{N}$ and $u_{n+1}-u_{n}\rightarrow +\infty$. Thus Lemma \ref{21estim1falpha} and Proposition \ref{2propfhcdb} ensure that, for all $1<p\leq\infty$, there is a function $f$ in $H(\mathbb{D})$ with
	$$M_p(f,r)\lesssim \psi \left(\frac{-\log(1-r)}{\log(2)}\right)\lesssim \varphi (r)$$
	that is $\mathcal{U}_{\beta^{\gamma}}$-frequently hypercyclic for $T_{\alpha_c}$. The proof is complete.
\end{proof}
\vskip5mm

From Theorems \ref{hcalpha}, \ref{optimalp1ufhc} and \ref{optimalubetagamma}, we can state the following result that unifies what happens in the critical case, given by the critical exponent, for the $L^p$ growth of $\mathcal{U}_{\beta^{\gamma}}$-frequently hypercyclic functions for $T_{\alpha}$, when $\gamma$ belongs to $[0,1]$. 

\begin{theorem}\label{mainexpcritical} Let $0\leq \gamma \leq 1$ and $1\leq p\leq \infty$.  Then, for any function $\varphi:[0,1)\rightarrow\mathbb{R}_+$, with $\varphi(r)\rightarrow +\infty$ as $r\rightarrow 1^{-}$, there is a function $f$ in $H(\mathbb{D})$ with $M_p(f,r)\lesssim \varphi(r)$ that is $\mathcal{U}_{\beta^{\gamma}}$-frequently hypercyclic for $T_{\alpha_c}$ where $\alpha_c=\frac{1-\gamma}{\max(2,q)}$.
\end{theorem}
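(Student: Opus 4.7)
The plan is to observe that this theorem is a pure unification result, obtained by splitting into the three cases $\gamma=0$, $0<\gamma<1$, and $\gamma=1$, each of which has already been handled by a previous theorem in the paper. No new construction is required; the main task is simply to check that the value of the critical exponent $\alpha_c=\frac{1-\gamma}{\max(2,q)}$ matches the critical exponent appearing in each of the three invoked statements.

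First I would treat the endpoint $\gamma=0$. Here $\alpha_c=\frac{1}{\max(2,q)}$, and $\mathcal{U}_{\beta^{0}}$-frequent hypercyclicity coincides with $\mathcal{U}$-frequent hypercyclicity since $\overline{d}_{\beta^0}=\overline{d}$. Thus the conclusion follows directly from Theorem \ref{optimalp1ufhc}, which yields, for every $\varphi$ tending to infinity at $1^-$, a $\mathcal{U}$-frequently hypercyclic function $f$ for $T_{\alpha_c}$ with $M_p(f,r)\lesssim\varphi(r)$, uniformly in $1\leq p\leq\infty$.

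Next, for the open range $0<\gamma<1$, the conclusion is exactly the content of Theorem \ref{optimalubetagamma}, which was proved by adapting the Rudin--Shapiro/de la Vallée-Poussin construction of Subsection \ref{subsectionconstructive} with blocks supported on an abstract lacunary sequence $(2^{u_n})$ built from $h=\varphi^{-1}$ (or a convenient modification), combined with Lemma \ref{keylemmaestim} to control the $L^p$-average. Hence nothing more has to be said in this range.

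Finally I would deal with the remaining endpoint $\gamma=1$. Here $\alpha_c=0$ and, by the remark made in Subsection \ref{subsectiondens}, $\overline{d}_{\beta^{1}}(E)>0$ if and only if $E$ is infinite, so $\mathcal{U}_{\beta^{1}}$-frequent hypercyclicity reduces to ordinary hypercyclicity. The existence of a hypercyclic function $f$ for $T_0$ satisfying $M_p(f,r)\lesssim\varphi(r)$ for any $\varphi(r)\to\infty$ is furnished by assertion (\ref{assa}) of Theorem \ref{hcalpha} applied with $\alpha=0$ (the estimate $(1-r)^{\alpha}\varphi(r)=\varphi(r)$). Combining the three cases exhausts $[0,1]$, and the only (mild) obstacle is the verification that the critical exponents $\frac{1}{\max(2,q)}$, $\frac{1-\gamma}{\max(2,q)}$, and $0$ used in Theorems \ref{optimalp1ufhc}, \ref{optimalubetagamma}, \ref{hcalpha} respectively all coincide with $\frac{1-\gamma}{\max(2,q)}$ for the relevant value of $\gamma$, which is immediate.
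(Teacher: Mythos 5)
Your proposal is correct and follows exactly the approach the paper intends: Theorem \ref{mainexpcritical} is stated as a unification of Theorems \ref{hcalpha}, \ref{optimalp1ufhc} and \ref{optimalubetagamma}, and your case split $\gamma=0$, $0<\gamma<1$, $\gamma=1$ together with the identifications $\overline{d}_{\beta^0}=\overline{d}$ and $\overline{d}_{\beta^1}(E)>0\Leftrightarrow E$ infinite reproduces that reduction. The verification that $\alpha_c=\frac{1-\gamma}{\max(2,q)}$ specializes to $\frac{1}{\max(2,q)}$ and $0$ at the endpoints is the only thing to check, and you did it.
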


\vskip10mm

\noindent \textbf{Concluding remark.} In summary, thanks to Theorems \ref{thmubetaopti}, \ref{udfhcalpha_no_criticp1}, \ref{optimalp1ufhc} and \ref{optimalubetagamma}, the results given by Theorems \ref{ufhcalpha_no_critic}, \ref{ufhcalpha_no_criticp1} and \ref{ubetafhcalpha_no_critic} are optimal.  
\vskip10mm

\end{document}